\DeclareMathOperator{\XY}{\UseComputerModernTips}
\begin{document}

\thispagestyle{empty}
\begin{center}
  \vspace{5mm}
  \LARGE
  \textbf{Pfaffian Calabi-Yau threefolds, Stanley-Reisner schemes and mirror symmetry} \\
  \Large
  \vspace{25mm}
  \large
  \textbf{Ingrid Fausk} \\
  \vspace{45mm}
  \large
  {\textsc{DISSERTATION PRESENTED FOR THE DEGREE\\
        OF PHILOSOPHIAE DOCTOR\\
   }}
  \vspace{14mm}
  \centerline{\includegraphics[width=4cm,height=4cm]{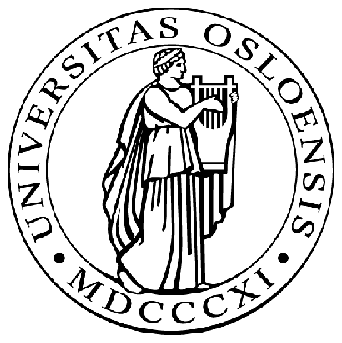}}
  \vspace{14mm}
  \large
  \textsc{DEPARTMENT OF MATHEMATICS\\
          UNIVERSITY OF OSLO\\}
  \vspace{7mm}
  \text{April 2012}
\end{center}

\chapter*{Acknowledgements}

I would like to thank my supervisor, Jan Christophersen, for suggesting this research project, and for his patience and support along the way. Each time I walked out his door, I felt more confident and optimistic than I did when I entered it. Furthermore, I would like to thank him for introducing me to
the field of algebraic geometry in general and to the topics relevant for this thesis in particular. I am grateful to the members of the Algebra and
Algebraic geometry group and the Geometry and Topology group at the University of Oslo for providing
a great environment in which to learn and thrive.
I would like to thank my second supervisor, Klaus Altmann, for hospitality
during my stay at the Freie Universit\"{a}t Berlin.

This thesis was completed during a happy period of my life. I wish to express my love and gratitude to my husband and fellow mathematician Halvard Fausk, and to our daughters, Astrid and Riborg.

\setcounter{chapter}{-1} 

\chapter*{Introduction}

Let $X$ be a smooth complex projective variety of dimension $d$. We call $X$ a {\it Calabi-Yau manifold} if
\begin{itemize}
\item[1.] $H^i(X, \mathcal{O}_X) = 0$ for every $i$, $0 < i < d$, and

\item[2.] $K_X := \wedge^d \Omega_X^1 \cong \mathcal{O}_X$, i.e., the canonical bundle is trivial.
\end{itemize}
\noindent
By the second condition and Serre duality we have

\begin{displaymath}
\text{dim}H^0(X, K_X) = \text{dim}H^d(X, \mathcal{O}_X) = 1
\end{displaymath}
i.e., the geometric genus of $X$ is 1.

Let $\Omega_X^p := \wedge^p \Omega_X^1$ and let $H^q(\Omega_X^p)$ be the $(p,q)$-th {\it Hodge cohomology group} of $X$ with {\it Hodge number} $h^{p,q}(X) := \text{dim}_{\mathbb{C}}H^q(\Omega_X^p)$. The Hodge numbers are important invariants of $X$. There are some symmetries on the Hodge numbers. By complex conjugation we have $H^q(\Omega_X^p) \cong H^p(\Omega_X^q)$ and by Serre duality we have $H^q(\Omega_X^p) \cong H^{d-q}(\Omega_X^{d-p})$. By the {\it Hodge decomposition}

\begin{displaymath}
H^k(X, \mathbb{C}) \cong  \textstyle\bigoplus\nolimits_{p + q = k}  H^q(\Omega_X^p)
\end{displaymath}
\noindent
we have

\begin{displaymath}
h^k(X) =
\underset{p + q = k}
\sum h^{p,q}(X) =  \underset{i = 0}{\overset{k}\sum} h^{i,k-i}(X) \ .
\end{displaymath}

The topological {\it Euler characteristic} of $X$ is an important invariant. It is defined as follows

\begin{displaymath}
\chi(X):=
\underset{k=0}{\overset{2d}\sum} (-1)^k h^k(X) \ .
\end{displaymath}
The conditions for $X$ to be Calabi-Yau assert that $h^{i,0}(X) = 0$ for $0<i<d$ and that $h^{0,0}(X) = h^{d,0}(X) = 1$.

We consider Calabi-Yau manifolds of dimension 3 in this text, these are simply called {\it Calabi-Yau threefolds}. In this case the relevant Hodge numbers are often displayed as a {\it Hodge diamond}.

\begin{center}
$h^{0,0}$\\
$h^{1,0}$ \ \ $h^{0,1}$ \\
$h^{2,0}$ \ \ $h^{1,1}$ \ \ $h^{0,2}$ \\
$h^{3,0}$ \ \ $h^{2,1}$ \ \ $h^{1,2}$ \ \ $h^{0,3}$\\
$h^{3,1}$ \ \ $h^{2,2}$ \ \ $h^{1,3}$ \\
$h^{3,2}$ \ \ $h^{2,3}$ \\
$h^{3,3}$ \\
\end{center}
By the properties mentioned above, the Hodge diamond reduce to

\begin{center}
1\\
0 \ \ 0 \\
0 \ \ $h^{1,1}$ \ \ 0 \\
1 \ \ $h^{2,1}$ \ \ $h^{1,2}$ \ \ 1 \\
0 \ \ $h^{2,2}$ \ \ 0 \\
0 \ \ 0 \\
1\\
\end{center}
\noindent
with the equalities $h^{1,1} =h^{2,2}$ and $h^{1,2} = h^{2,1}$ as explained above. In this case, the Euler characteristic of $X$ is

$$\chi(X) = 2(h^{1,1}(X)  - h^{1,2}(X))$$

Physicists have discovered a phenomenon for Calabi-Yau threefolds, known as {\it mirror symmetry}. This is conjectured to be a correspondence between families of Calabi-Yau threefolds $X$ and $X^{\circ}$ with the isomorphisms \[ H^q(X, \wedge ^p \Theta_X) \cong H^q(X^{\circ}, \Omega^p_{X^{\circ}})\] and vice versa, where $\Theta_X$ is the tangent sheaf of $X$. Since $\wedge^p \Theta_X $ is isomorphic to $\Omega^{3-p}_X$, this gives the numerical equality $h^{p,q}(X) = h^{p,3-q}(X^{\circ})$, and hence $\chi(X) = - \chi(X^{\circ})$, which we will verify for some examples in this thesis. These symmetries correspond to reflecting the Hodge diamond along a diagonal.

For trivial reasons, the mirror symmetry conjecture, as stated above, fails for the Calabi-Yau threefolds where $h^{2,1}(X) = 0$, since Calabi-Yau manifolds are K\"{a}hler, so $h^{1,1}(X)> 0$.

A {\it nonlinear sigma model} consists of  a Calabi-Yau threefold $X$ and a {complexified K\"ahler class} $\omega = B + iJ$ on $X$, where $B$ and $J$ are elements of $H^2(X,\mathbb{R})$, with $J$ a K\"{a}hler class. The {\it moduli}, i.e. how one can deform the complex structure and the complexified structure $\omega$, is governed by $H^1(\Theta_X)$ and
$ H^1(\Omega_X)$, respectively. The isomorphisms $H^1(\Theta_X) \cong H^1(\Omega_{X^{\circ}})$ and $H^1(\Theta_{X^{\circ}})
\cong H^1(\Omega_{X})$ give a local isomorphism between the complex moduli space of $X$ and the
K\"ahler moduli space of $\omega^{\circ}$, and between the complex moduli space of $X^{\circ}$ and the
K\"ahler moduli space of $\omega$. These local isomorphisms are collectively called the {\it mirror map}. A general reference on Calabi-Yau manifolds and mirror symmetry is the book by Cox and Katz~\cite{coxkatz}.

In this thesis we study projective Stanley-Reisner schemes obtained from triangulations of 3-spheres, i.e.~$X_0 := \text{Proj}(A_K)$ for $K$ a triangulation of a 3-sphere and $A_K$ its Stanley-Reisner ring. These schemes are embedded in $\mathbb{P}^n$ for various $n$. We obtain Calabi-Yau 3-folds by smoothing (when a smoothing exists) such Stanley-Reisner schemes.

The first mirror construction by Greene and Plesser for the general quintic hypersurface in $\mathbb{P}^4$ will be reviewed in Chapter 1.

In Chapter 2 we give a method for computing the Hodge number $h^{1,2}(\tilde{X})$ of a small resolution $\tilde{X} \rightarrow X$, where $X$ is a deformation of a Stanley-Reisner scheme $X_0$ with the only singularities of $X$ being nodes. We use results on cotangent cohomology, and a lemma by Kleppe~\cite{kleppe}, which in our case states that $T^1_X \cong T^1_{A,0}$ for $X = \text{Proj}(A)$, i.e. the module of embedded (in $\mathbb{P}^n$) deformations of $X$ is isomorphic to the degree 0 part of the module of first order deformations of the ring $A$. We compute the Hodge number $h^{1,2}(\tilde{X})$ as the dimension of the kernel of the evaluation morphism $T^1_{A,0} \rightarrow \oplus_i T^1_{A_i}$, where $A_i$ is the local ring of a node $P_i$. We use this method in the only non-smoothable example in Chapter 3, where we construct a Calabi-Yau 3-fold with $h^{1,2}(\tilde{X})= 86$ from a small resolution of a variety with one node.

 Gr\"unbaum and Sreedharan~\cite{sreedharan} proved that there are 5 different combinatorial types of triangulations of the 3-sphere with 7 vertices. In Chapter 3 we compute the Stanley-Reisner schemes of these triangulations. They are Gorenstein and of codimension 3, and we use a structure theorem by Buchsbaum and Eisenbud~\cite{buchseisen} to describe the generators of the Stanley-Reisner ideal as the principal Pfaffians of its skew-symmetric syzygy matrix. This approach combined with results by Altmann and Christophersen~\cite{altchrCotangent} on deforming combinatorial manifolds, gives a method for computing the versal deformation space of the Stanley-Reisner scheme of such a triangulation. As we mentioned above, we get a non-smoothable Stanley-Reisner scheme in one case. In the four smoothable cases, we compute the Hodge numbers of the smooth fibers, following the exposition in \cite{roedland}. We also compute the auto\-morphism groups of the triangulations, and consider subfamilies invariant under this action.

 R\o dland constructed in \cite{roedland} a mirror of the 3-fold in $\mathbb{P}^6$ of degree 14 generated by the principal pfaffians of a general $7\times 7$ skew-symmetric matrix with general linear entries, done by orbifolding. B\"ohm constructed in~\cite{boehm} a mirror candidate of the 3-fold in $\mathbb{P}^6$ of degree 13 generated by the principal pfaffians of a $5\times 5$ skew-symmetric matrix with general quadratic forms in one row (and column) and linear terms otherwise. This was done using tropical geometry. In Chapter 4 we describe how the R\o dland and B\"ohm mirrors are obtained from the triangulations in Chapter 3, and in Chapter 5 we verify that the Euler characteristic of the B\"ohm mirror candidate is what it should be.

In general, the mirror constructions we consider in this thesis are obtained in the following way. We consider the automorphism group $G := \text{Aut}(K)$ of the simplicial complex $K$. The group $G$ induces an action on $T^1_{X_0}$, the module of first order deformations of the Stanley-Reisner scheme $X_0$ in the following way. Since an element of $T^1_{X_0}$ is represented by a homomorphism $\phi \in \text{Hom} (I/I^2, A)$, an action of $g\in G$ can be defined by $(g \cdot \phi ) f = g \cdot \phi(g ^{-1}\cdot f)$, where $f \in I$ is a representative for a class in the quotient $I/I^2$.

There is also a natural action of the torus $(\mathbb{C}^*)^{n+1}$ on $X_0 \subset \mathbb{P}^{n}$ as follows. An element $\lambda = (\lambda_0,\ldots, \lambda_{n}) \in (\mathbb{C}^*)^{n+1}$ sends a point $(x_0,\ldots ,x_n) $ of $\mathbb{P}^n$ to $(\lambda_0x_0,\ldots , \lambda_n x_n)$. The subgroup $ \{ (\lambda,\ldots, \lambda) | \lambda \in \mathbb{C}^* \}$ acts as the identity on $\mathbb{P}^n$, so we have an action of the quotient torus $T_n := (\mathbb{C}^*)^{n+1}/\mathbb{C}^*$. Since $I_{X_0}$ is generated by monomials it is clear that $T_n$ acts on $X_0$.

We compute the family of first order deformations of $ X_0$. When the general fiber is smooth, we consider a subfamily, invariant under the action of $G$, where the general fiber $X_t$ of this subfamily has only isolated singularities. We compute the subgroup $H \subset T_n$ of the quotient torus which acts on this chosen subfamily, and  consider the singular quotient $Y_t = X_t/H$. The mirror candidate of the smooth fiber is constructed as a crepant resolution of $Y_t$. In Chapter 4 we perform these computations in order to reproduce the R\o dland and B\"ohm mirrors.

In Chapter 5 we verify that the Euler characteristic of the B\"ohm mirror candidate is 120.  This is as  expected since the cohomology computations in Chapter 3 give Euler characteristic -120 for the original manifold obtained from smoothing the Stanley-Reisner scheme of the triangulation.

We compute the Euler characteristic of the B\"{o}hm mirror using toric geometry. A crepant resolution is constructed locally in 4 isolated $Q_{12}$ singu\-larities. These 4 singularities and two other points are fixed under the action of the group $G$, which is isomorphic to the dihedral group $D_4$. The subgroup $H$ of the quotient torus acting on the chosen subfamily is isomorphic to $\mathbb{Z}/13\mathbb{Z}$. Denote one of these singularities by $V$. The singularity is embedded in $\mathbb{C}^4/H$, which is represented by a cone $\sigma$ in a lattice $N$ isomorphic to $\mathbb{Z}^4$. A resolution $X_{\Sigma} \rightarrow \mathbb{C}^4/H$ corresponds to a regular subdivision of $\sigma$. This subdivision is computed using the Maple package convex~\cite{convex}, and it has 53 maximal cones which are spanned by 18 rays. The following diagram commutes, where $\widetilde{V}$ is the strict transform of $V$.

$$ \XY
  \xymatrix@1{\widetilde{V}\,\ar@{^{(}->}[r] \ar[d] & X_{\Sigma}\ar[d] \\
   V\, \ar@{^{(}->}[r] & \mathbb{C}^4/H }$$

 Each ray $\rho$ in $\Sigma$, aside from the 4 generating the cone $\sigma$, determines an exceptional divisor $D_{\rho}$ in $X_{\Sigma}$. Hence there are 14 exceptional divisors in $X_{\Sigma}$. For every ray $\rho$, the exceptional divisor $D_{\rho}$ is a smooth, complete toric 3-fold and comes with a fan $\text{Star}(\rho)$ in a lattice $N(\rho)$ and a torus $T_{\rho}$ corresponding to these lattices. The subvariety will only intersect 10 of these exceptional divisors $D_{\rho}$. In 9 of these 10 cases the intersection is irreducible and in one case the intersection has 4 components, but one of these is the intersection with another exceptional divisor. All in all the exceptional divisor $E$ in $\tilde{V}$ has 12 components $E_1,\ldots,E_{12}$.

To compute the type of the components $E_i$, several different techniques are needed depending upon the complexity of $D_{\rho}$. In some cases the intersection $\tilde{V}\cap T_{\rho}$ is a torus. In some cases $D(\rho)$ is a locally trivial $\mathbb{P}^1$ bundle over a smooth toric surface. In some cases $E_i$ is an orbit closure in $X_{\Sigma}$ corresponding to a 2-dimensional cone in $\Sigma$. In one case we construct a polytope which has $\text{Star}(\rho)$ as its normal fan.

The space $E$ is a normal crossing divisor. We compute the intersection complex by looking at the various intersections $\tilde{V} \cap D_{\rho_1} \cap D_{\rho_2}$ and $\tilde{V} \cap D_{\rho_1} \cap D_{\rho_2} \cap D_{\rho_3}$, and we compute the Euler characteristic of $E$. For the two other quotient singularities we use the McKay correspondence by Batyrev \cite{bat} in order to find the euler characteristic. We put all this together in order to get the Euler characteristic of the resolved variety.

Computer algebra programs like Macaulay\,2~\cite{M2}, Singular~\cite{GPS05} and Maple \cite{maple} have been used extensively throughout my studies, partly for handling expressions with many parameters and getting overview, but also for proving results. The code is not always included, but it is hoped that enough information is provided in order for the computations to be verified by others.

\tableofcontents

\chapter{Preliminaries}
\section{Simplicial Complexes and Stanley-Reisner schemes}\label{simplicial}

Throughout this thesis we will work over the field of complex numbers $\mathbb{C}$. We will first give some basic definitions. Let $[n] = \{ 0,\ldots,n \}$ be the set of all positive integers from 0 to $n$, and let $\Delta_n$ denote the set of all subsets of $[n]$. We view a {\it simplicial complex} as a subset $K$ of $\Delta_n$ with the property that if $f \in K$, then all the subsets of $f$ are also in $K$. The elements of $K$ are called {\it faces} of $K$. Let $p\in \Delta_n$. In the polynomial ring $R = \mathbb{C}[x_0,\ldots , x_n]$, let $x_p$ be defined as the monomial $\Pi_{i \in p} x_i$. We define the set of ``non-faces'' of $K$ to be the complement of K in $\Delta_n$, i.e. $M_K = \Delta_n \setminus K$. The {\it Stanley-Reisner ideal} $I_K$ is defined as the ideal generated by the monomials corresponding to the "non-faces" of $K$, i.e.

$$I_K = \displaystyle\langle x_p \in R  \mid  p \in M_K \displaystyle\rangle \,\,.$$
\noindent
The {\it Stanley-Reisner ring} is defined as the quotient ring $A_K = R/I_K$. The projective scheme

$$\mathbb{P}(K) := \text{Proj}\, (A_K)$$
\noindent
is called the {\it projective Stanley-Reisner scheme}.

We will need the following definitions. For an face $f \in K$, we define the {\it link} of $f$ in $K$ as the set

$$\text{link}\,(f,K) := \displaystyle\{ g \in K \mid g\displaystyle\cap f = \emptyset \text{\,\,and}\,\, g\cup f \in K \displaystyle\} \, .$$
We set $[K] \subset [n]$ to be the vertex set $[K] = \{ i \in [n] : {i} \in K \}$. The {\it closure} of $f$ is defined as $\overline{f} = \{g \in \Delta_n : g \subseteq f \}$. The {\it boundary} of $f$ is defined as $\partial f= \{ g \in \Delta_n : g \subset f \text{\,\,proper subset} \}$. The {\it join} of two complexes $X$ and $Y$ is defined by

$$X*Y = \displaystyle\{ f \displaystyle\sqcup g \mid f\in X \,\, g\in Y \displaystyle\}\,\, ,$$
\noindent
where the symbol $\sqcup$ denotes disjoint union. The geometric realization of $K$, denoted $|K|$, is defined as

$$|K| := \displaystyle\{ \alpha : [n] \rightarrow [0,1] : \text{supp}(\alpha) \in X \text{\,and\,\,} \underset{i}{\textstyle\sum}\alpha(i) = 1\displaystyle\}\,\, ,$$
where $\text{supp}(\alpha) := \{ i: \alpha(i)\neq 0\}$ is the support of the function $\alpha$. The real number $\alpha(i)$ is called the $i$th {\it barycentric coordinate of $\alpha$}. One can define a metric topology on $K$ by defining the distance $d(\alpha, \beta)$ between two elements $\alpha$ and $\beta$ as

$$d(\alpha, \beta) = \sqrt{\sum_{i \in K}(\alpha(i) - \beta(i))^2}\,\, .$$
\noindent
For a general reference on simplicial complexes, see the book by Spanier~\cite{spanier}.

The schemes $\mathbb{P}(K)$ are singular. In fact, $\mathbb{P}(K)$ is the union of projective spaces, one for each {\it facet} (maximal face) in the simplicial complex $K$, intersecting the same way as the facets intersect in $K$. The proof of this statement is combinatorial: Let $p \in \Delta_n$ be a set with the property that $p\cap q \neq \emptyset$ for all $q \in M_K$ and suppose also that $p \neq [n]$. Then the complement $p^c := [n] - p$ is a face of $K$, and $p^c \neq \emptyset$. Note that if $p$ is a minimal set with the property mentioned above, then $p^c$ is a facet. Recall that $x_p$ is defined as the monomial $x_p := \Pi_{i \in p} x_i$, and that the Stanley-Reisner ideal of $K$ is generated by the monomials $x_q$ with $q\in M_K$. If $x_i = 0$ for all $i \in p$, then all the monomials $x_q$ are zero, since each $x_q$ contains a factor $x_i$ when $p$ has the property mentioned above and $i\in p$. Hence the scheme $\mathbb{P}(K)$ is the union of projective spaces which are defined by such $p$, i.e. given by $x_i = 0$ for all $i \in p$. These projective spaces are of dimension $| p^c| -1$, and they are in one to one correspondence with the faces $p^c$.

We will now mention some special triangulations of spheres which will be of importance in this thesis. The most basic triangulation of the $n-1$-sphere is the boundary $\partial \Delta_n$ of the n-simplex $\Delta_n$ (more precisely, with the definition of boundary of a face given above, it is the boundary of the unique facet $[n] = \{0,\ldots,n\}$ of $\Delta_n$.) For $n = 1$ it is the union of two vertices. For $n = 2$ it is the boundary of a triangle, denoted $E_3$. All triangulations of $\mathbb{S}^1$ are boundaries of $n$-gons, denoted  $E_n$, for $n\geq 3$. The boundary of the 3-simplex $\partial \Delta_3$ is the boundary of a regular pyramid. From now on, we will for simplicity omit the word "boundary", and we will denote the triangulations of spheres as triangles, n-gons, pyramids etc. Other basic triangulations of $\mathbb{S}^2$ are the suspension of the triangle $\Sigma E_3$ (double pyramid) and the {\it octahedron} $\Sigma E_4$ (double pyramid with quadrangle base). Let $C_{k}$ be the chain of $k$ 1-simplices, i.e. $\{ \{0, 1 \},\{1,2\}, \ldots ,\{k-1, k \} \}$. Let $\Delta_1$ be the set of all subsets of $\{ n-2,n-1 \}$. Then we define (the boundary of) the {\it cyclic polytope}, $\partial C(n,3)$, as the union $(\overline{C_{n-3}} * \partial \Delta_1) \cup J$, where $J$ is the join $\Delta_1 * \{\{0\},\{n-3\}\}$ (see the book by Gr\"unbaum \cite{grunbaum} for details).

\section{Deformation Theory}

Given a scheme $X_0$ over $\mathbb{C}$, a {\it family of deformations}, or simply a {\it deformation} of $X_0$ is defined as a cartesian diagram of schemes

$$\XY
 \xymatrix@1{
X_0\ar[r]\ar[d]                 & \mathcal{X}\ar[d]^{\pi}\\
\text{Spec}(\mathbb{C})\ar[r] & S\\
 }$$
where $\pi$ is a flat and surjective morphism and $S$ is connected. The scheme $S$ is called the {\it parameter space} of the deformation, and $\mathcal{X}$ is called the {\it total space}. When $S = \text{Spec} B$ with $B$ an artinian local $\mathbb{C}$-algebra with residue field $\mathbb{C}$ we have an {\it infinitesimal deformation}. If in addition the ring $B$ is the ring of dual numbers, $B = \mathbb{C}[\epsilon]/(\epsilon^2)$, the deformation is said to be of {\it first order}. A {\it smoothing} is a deformation where the general fiber $\mathcal{X}_t$ of $\pi$ is smooth. For a general reference on deformation theory, see e.g. the book by Hartshorne~\cite{hartshorneDef} or the book by Sernesi~\cite{sernesi}.

For a construction of the cotangent cohomology groups in low dimensions, see e.g. Hartshorne~\cite{hartshorneDef}, where {\it cotangent complex} and the cotangent cohomology groups $T^i(A/S,M)$ are constructed for $i = 0,1$ and $2$, where $S\rightarrow A$ is a ring homomorphism and $M$ is an $A$-module. This is part of the cohomology theory of Andr{\'e} and Quillen, see e.g. the book by Andr{\'e} \cite{andre}.

We will be interested in the case with $M = A$ and $S = \mathbb{C}$, and in this case the {\it cotangent modules} will be denoted $T^n_{A}$. We will consider the first three of these. The module $T^0_{A}$ describes the derivation module $\text{Der}_{\mathbb{C}}(A,A)$. The module $T^1_{A}$ describes the first order deformations, and the $T^2_{A}$ describes the obstructions for lifting the first order deformations.

Let $R$ be a polynomial ring over $\mathbb{C}$ and let $A$ be the quotient of $R$ by an ideal $I$. The module $T^1_{A}$ is the cokernel of the map

$$\text{Der}(R,A) \rightarrow \text{Hom}_R(I, A) \cong \text{Hom}_{A}(I/I^2, A)\ ,$$
\noindent
where a derivation $\phi: R \rightarrow A$ is mapped to the restriction $\phi | I : I \rightarrow A$. Let

$$\XY
 \xymatrix@1{
0\,\ar[r] & \text{Rel}\,\,\ar[r] & F\,\ar[r]^j & R\,\ar[r] & A}$$
\noindent
be an exact sequence presenting $A$ as an $R$ module with $F$ free. Let $\text{Rel}_0$ be the submodule of $\text{Rel}$ generated by the Koszul relations; i.e. those of the form $j(x)y - j(y)x$. Then $\text{Rel}/\text{Rel}_0$ is an $A$ module
and we have an induced map

$$\text{Hom}_A(F/\text{Rel}_0 \otimes_R A, A) \rightarrow \text{Hom}_A(\text{Rel}/\text{Rel}_0, A)\,\, .$$
\noindent
The module $T^2_{A}$ is the cokernel of this map.

The $T^i$ functors are compatible with localization, and thus define sheaves.

\newtheorem{defCotCohom}{Definition}[section]
\begin{defCotCohom}
Let $\mathcal{S}$ be a sheaf of rings on a scheme $X$, $\mathcal{A}$ an $\mathcal{S}$-algebra and $\mathcal{M}$ an $\mathcal{A}$-module. We define the sheaf $\mathcal{T}^i_{\mathcal{A}/\mathcal{S}}(\mathcal{M})$ as the sheaf associated to the presheaf

$$U \mapsto T^i(\mathcal{A}(U)/\mathcal{S}(U); \mathcal{M(U)})$$\end{defCotCohom}

 Let $X$ be a scheme $\mathcal{A} = \mathcal{O}_X$, $\mathcal{M} = \mathcal{A}$ and $S = \mathbb{C}$, and denote by $\mathcal{T}^i_X$ the sheaf $\mathcal{T}^i_{\mathcal{O}_X/\mathbb{C}}$. The modules $T_X^i$ are defined as the hyper-cohomology of the cotangent complex on $X$.

 For projective schemes, we will be interested in the deformations that are embedded in $\mathbb{P}^n$, and the following lemma will be useful.

\newtheorem{t1}{Lemma}[section]
\begin{t1}
If $A$ is the Stanley-Reisner ring of a triangulation of a 3-sphere and $X = \text{Proj\,} A$, then there is an isomorphism
\begin{displaymath}
T_X^1 \cong T_{A,\,0}^1\,\, .
\end{displaymath}
\label{lemma:kleppe}
\end{t1}

\begin{proof}
See the article by Kleppe~\cite{kleppe}, Theorem 3.9, which in the case $\mu =0$, $i = 1$ and $n>1$ (and in our notation) states that there is a canonical morphism

$$T^1_{A,0} \rightarrow T_X^1$$
\noindent
which is a bijection if $\text{depth}_m A > 3$, where $m$ is the ideal $\coprod_{i >0}A_i$. Note that the Stanley-Reisner ring corresponding to a triangulation of a sphere is Gorenstein (see Corollary 5.2, Chapter II, in the book by Stanley~\cite{stanley}). If $A$ is the Stanley-Reisner ring of a triangulation of a 3-sphere a, we have $\text{depth}_m A = 4$, hence the morphism above is a bijection.
\end{proof}

When the simplicial complex $K$ is a triangulation of the sphere, i.e. $|K| \cong \mathbb{S}^n$, a smoothing of $X_0$ yields an elliptic curve, a K3 surface or a Calabi-Yau 3-fold when $n = 1$, 2 or 3, respectively. We will prove this in the $n= 3$ case.

\newtheorem{cala}{Theorem}[section]
\begin{cala}A smoothing, if it exists, of the Stanley-Reisner scheme of a triangulation of the 3-sphere yields a Calabi-Yau 3-fold.\label{theorem:cala}\end{cala}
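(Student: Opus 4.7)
The definition of Calabi--Yau threefold recalled in the introduction requires two things for a smooth general fiber $X_t$: (1) $H^i(X_t, \mathcal{O}_{X_t}) = 0$ for $0 < i < 3$, and (2) the canonical sheaf $K_{X_t} = \omega_{X_t}$ is trivial. The plan is to verify both conditions first on the central fiber $X_0 = \text{Proj}(A_K)$ and then propagate them through the flat smoothing family $\pi : \mathcal{X} \to S$.

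For (1), I would exploit the observation already used in the proof of Lemma~\ref{lemma:kleppe}: since $K$ triangulates $\mathbb{S}^3$, the ring $A := A_K$ is Gorenstein with $\text{depth}_{\mathfrak{m}} A = 4$, so the local cohomology modules $H^i_{\mathfrak{m}}(A)$ vanish for $i < 4$. The Serre--Grothendieck correspondence
\[ H^i(X_0, \mathcal{O}_{X_0}) \cong H^{i+1}_{\mathfrak{m}}(A)_0, \qquad i \geq 1, \]
immediately gives $H^1(X_0, \mathcal{O}_{X_0}) = H^2(X_0, \mathcal{O}_{X_0}) = 0$, while $H^0(X_0, \mathcal{O}_{X_0}) = A_0 = \mathbb{C}$ and $H^3(X_0, \mathcal{O}_{X_0}) \cong H^0(X_0, \omega_{X_0})^{\vee}$ by Serre duality on the Gorenstein scheme $X_0$. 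Upper semi-continuity of fiber cohomology in the flat family then forces $H^i(X_t, \mathcal{O}_{X_t}) = 0$ for $0 < i < 3$.

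For (2), I appeal to the classical fact that a Gorenstein Stanley--Reisner ring arising from a sphere triangulation has $a$-invariant equal to zero: its $h$-vector is palindromic with $h_d = 1$, so an artinian reduction has its socle concentrated in degree $d$ (Stanley~\cite{stanley}). Hence $\omega_A \cong A$ as graded modules and therefore $\omega_{X_0} \cong \mathcal{O}_{X_0}$, providing a nowhere-zero trivializing section $s_0 \in H^0(X_0, \omega_{X_0})$. Combining part (1) with Serre duality on the fibers shows that $h^0$ and $h^1$ of the dualizing sheaf are constant in the family, so by cohomology and base-change $s_0$ lifts to a section $s$ of the relative dualizing line bundle $\omega_{\mathcal{X}/S}$ on a neighborhood of $X_0$. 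The zero locus $Z(s) \subset \mathcal{X}$ is closed and disjoint from $X_0$; properness of $\pi$ then implies $Z(s)$ is disjoint from every nearby fiber, so the restriction of $s$ to $X_t$ is nowhere zero and trivializes $\omega_{X_t}$.

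The main obstacle I expect is the propagation step in (2): one has to be careful that $\omega_{\mathcal{X}/S}$ is actually a line bundle (which does follow because all fibers are Gorenstein), that cohomology and base-change applies in the presence of the singular central fiber (which follows from the constancy of $h^0$ and $h^1$ of $\omega$ on fibers via Serre duality), and that the intrinsic duality on $X_0$ is compatible with the relative duality along the family. Each of these is standard in isolation, but putting them together cleanly---so that the trivialization of $\omega_{X_0}$ really produces a trivialization of $\omega_{X_t}$ on a nearby smooth fiber---is the technical heart of the argument.
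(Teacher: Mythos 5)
Your proposal is correct, but it takes a genuinely different route from the paper on both halves of the statement. For the vanishing of $h^i(X_0,\mathcal{O}_{X_0})$, $0<i<3$, the paper quotes Theorem 2.2 of Altmann--Christophersen~\cite{altchrDeforming}, which identifies $h^i(X_0,\mathcal{O}_{X_0})$ with the simplicial cohomology $h^i(K,\mathbb{C})$ of the 3-sphere; you instead obtain the same vanishing from $\text{depth}_{\mathfrak{m}}A=4$ via local cohomology and the Serre--Grothendieck correspondence---an equivalent but more ring-theoretic argument (note that only Cohen--Macaulayness, i.e.\ Reisner's criterion, is needed at this step, not the full Gorenstein property). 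Both arguments then invoke semicontinuity identically. For triviality of the canonical bundle, the paper cites Bayer--Eisenbud~\cite{bayereisenbud}, Theorem 6.1, for $\omega_{X_0}\cong\mathcal{O}_{X_0}$ and then simply asserts ``hence $\omega_{X_t}$ is trivial for all $t$''; you re-derive the central-fiber statement from the $a$-invariant of the Gorenstein Stanley--Reisner ring being zero (palindromic $h$-vector with $h_d=1$, by Dehn--Sommerville), and---more significantly---you supply the propagation step that the paper compresses into that ``hence'': lifting the trivializing section to $\omega_{\mathcal{X}/S}$ by cohomology and base change, which is legitimate here because $h^1(\omega_{X_t})=h^2(\mathcal{O}_{X_t})^{\vee}=0$ and $h^0(\omega_{X_t})=h^3(\mathcal{O}_{X_t})=1$ is constant (constancy of $\chi(\mathcal{O})$ in the flat family together with part (1) and $h^0=1$), and then using properness of $\pi$ to push nonvanishing of the section to nearby fibers. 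In short, the paper's approach buys brevity by outsourcing the two key facts to~\cite{altchrDeforming} and~\cite{bayereisenbud}, while yours is essentially self-contained from Stanley's Gorenstein theorem plus standard base-change technology, at the cost of having to verify the supporting hypotheses you correctly flag: that all fibers are Gorenstein (true, since the Gorenstein locus is open in a flat family, so relative duality applies and $\omega_{\mathcal{X}/S}$ is a line bundle) and that the base may be taken reduced, e.g.\ by restricting to a one-parameter disc through the smoothing direction.
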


\begin{proof}Sheaf cohomology of $X_0$ is isomorphic to simplicial cohomology of the complex $K$ with coefficients in $\mathbb{C}$, i.e. $h^i(X_0,\mathcal{O}_{X_0}) = h^i(K, \mathbb{C})$. This is proved in Theorem 2.2 in the article by Altmann and Christophersen~\cite{altchrDeforming}. The semicontinuity theorem (see Chapter III, Theorem 12.8 in ~\cite{hartshorne}) implies that $h^i(X_t, \mathcal{O}_{X_t} )= 0$ for all $t$ when $h^i(X_0, \mathcal{O}_{X_0} )= 0$. Third, the Stanley-Reisner scheme $X_0$ of an oriented combinatorial manifold has trivial canonical bundle $\omega_{X_0}$, hence $\omega_{X_t}$ is trivial for all $t$. This is proved in the article by Bayer and Eisenbud~\cite{bayereisenbud}, Theorem 6.1.
\end{proof}

\section{Results on deforming Combinatorial Manifolds}\label{resultsDeforming}

A method for computing the $T^i$ is given in the article by Altmann and Christophersen~\cite{altchrDeforming}. If $K$ is a simplicial complex on the set $\{ 0, \ldots , n \}$ and $A: = A_K$ is the Stanley-Reisner ring associated to $K$, then the $T^1_{A}$ is $\mathbb{Z}^{n+1}$ graded. For a fixed ${\bf c} \in \mathbb{Z}^{n+1}$ write ${\bf c = a - b}$ where ${\bf a} = (a_0, \ldots , a_{n})$ and ${\bf b} = (b_0 ,\ldots b_{n})$ with $a_i,b_i \geq 0$ and $a_ib_i = 0$. Let $x^\mathbf{a}$ be the monomial $x_0^{a_0}\cdots x_{n}^{a_n}$. We define the support of ${\bf a}$ to be $a = \{  i\in [ n]   |  a_i \neq 0 \}$. Thus if ${\bf a} \in \{ 0,1 \}^{n+1}$, then we have $x_a = x^{\bf a}$. If $a, b \subset \{ 0, \ldots ,n \}$ are the supporting subsets corresponding to ${\bf a}$ and ${\bf b}$, then $a \cap b = \emptyset$. The graded piece $T_{A, \bf{c}}^1$ depends only on the supports $a$ and $b$, and vanish unless $a$ is a face in $K$, $\mathbf{b} \in \{0,1 \}^n$ and $b \subset [ \text{link} (a,K) ]$.

 The module $\text{Hom}_R (I_0, A)_{\mathbf{c}}$ sends each monomial $x_p$ in the generating set of the Stanley-Reisner ideal $I_0$ defining $A = R/I_0$ to the monomial $\frac{x_p x^\mathbf{a}}{x^\mathbf{b}}$ when $\mathbf{b}\subset \mathbf{p}$, and 0 otherwise. This corresponds to perturbing the generator $x_p$ of $I_0$ to the generator $x_p + t\frac{x_p x^\mathbf{a}}{x^\mathbf{b}}   $ of a deformed ideal $I_t$.

If $|K| \cong \mathbb{S}^3$, then the link of every face $f$, $| \text{link} (f)|$, is a sphere of dimension $2 - \text{dim}(f)$. We will need some results on how to compute the module $T^1_{A}$ for these Stanley-Reisner schemes. We will list results from~\cite{altchrCotangent}. We write $T^1_{<0}(X)$ for the sum of the graded pieces $T^1_{A,\mathbf{c}}$ with $\mathbf{a} = 0$, i.e. $a = \emptyset$.

\newtheorem{alch}{Theorem}[section]
\begin{alch}
If $K$ is a manifold, then

$$T^1_{A} = \underset{\mathbf{a}\in \,\mathbb{Z}^n \text{\,with\,}\, a\in \,X}{\displaystyle\sum} T^1_{<\,0}(\text{link}\,(a,X))$$
where $T^1_{<0}(\text{link}(a,X))$ is the sum of the one dimensional $T^1_{\emptyset - b}(\text{link}(a,X))$ over all $b \subseteq [\text{link}(a,X)]$ with $|b| \geq 2$ such that $\text{link}(a,X) = L * \partial b$ if $b$ is not a face of $\text{link}(a,X)$, or $\text{link}(a,X) = L * \partial b \cap \partial L * \overline{b}$ if $b$ is a face of $\text{link}(a,X)$. In the first case $| L |$ is a $(n - |b| + 1)$-sphere, in the second case $| L |$ is a $(n - |b |+ 1)$-ball
\end{alch}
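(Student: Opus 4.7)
The plan is to exploit the $\mathbb{Z}^{n+1}$-grading on $T^1_A$ and reduce the computation to the links. Recall from the preceding discussion that $T^1_{A,\mathbf{c}}$ vanishes unless $\mathbf{c} = \mathbf{a} - \mathbf{b}$ with $a$ a face of $K$, $\mathbf{b} \in \{0,1\}^{n+1}$, and $b \subseteq [\text{link}(a, K)]$; moreover each such piece is determined by the supports $a$ and $b$. Consequently $T^1_A$ decomposes as a direct sum over pairs $(a, b)$ satisfying these conditions, and the task becomes (i) organizing this sum by first choosing $a$ and then $b$, and (ii) identifying which $b$'s contribute, and with what multiplicity.

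For step (i), the natural identification is $T^1_{A,\mathbf{a}-\mathbf{b}} \cong T^1_{A_{\text{link}(a,K)},-\mathbf{b}}$. This is obtained directly from the presentation $T^1_A = \mathrm{coker}(\mathrm{Der}(R,A) \to \mathrm{Hom}_R(I_K,A))$: multiplication by $x^\mathbf{a}$ sets up a bijection between the perturbations $x_p \mapsto x_p x^\mathbf{a}/x^\mathbf{b}$ of generators $x_p$ of $I_K$ containing $\mathbf{b}$ in their support and perturbations $x_q \mapsto x_q/x^\mathbf{b}$ of generators of $I_{\text{link}(a,K)}$, while the trivial (Koszul/derivation) perturbations match on both sides. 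Summing over faces $a \in K$ produces the outer sum in the theorem, and reduces everything to describing $T^1_{<0}(L)$ for $L := \text{link}(a,K)$.

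For step (ii), fix $L$ and $\mathbf{b}$. Since $K$ is an $n$-manifold, $|L|$ is a sphere of dimension $n - \dim a - 1$. Analyze $T^1_{L,-\mathbf{b}}$ from the presentation: a class is a compatible choice of perturbation numerators for the minimal non-faces of $L$ that contain $b$, modulo the Koszul relations coming from $x_i$ with $i\in b$. One shows that compatibility forces this datum to be either zero or a single scalar, and identifies exactly when the latter occurs. The case $b \notin L$ requires that every minimal non-face meeting $b$ already contain all of $b$, equivalently that $L$ split as a join $L = L' * \partial b$; in that case removing the $\partial b$-factor from the sphere $|L|$ forces $|L'|$ to be a sphere of complementary dimension $n-|b|+1$. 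When $b \in L$, the face $\overline{b}$ itself appears and one gets the refined gluing $L = L' * \partial b \cup \partial L' * \overline{b}$, in which case $|L'|$ is a ball of the corresponding dimension; this can be seen by excising the (closed) star of $b$ from the sphere $|L|$.

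The main obstacle is step (ii): proving the combinatorial dichotomy between the ``sphere'' and ``ball'' cases and showing one-dimensionality in each. The homological part is straightforward bookkeeping once the join structure is in hand, but extracting the join decomposition from the mere non-vanishing of $T^1_{L,-\mathbf{b}}$ requires a careful analysis of how the minimal non-faces of $L$ that lie above $b$ interact with the remaining vertices, together with an essential use of the manifold hypothesis on $L$ (via its sphere structure) to conclude that the residual complex $L'$ is itself a sphere or ball of the prescribed dimension. Once this classification is in place, summing over admissible $b$ yields $T^1_{<0}(L)$, and then summing over $a$ yields the claimed formula for $T^1_A$.
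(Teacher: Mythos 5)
First, a point of comparison: the paper does not prove this theorem at all. It is quoted verbatim from Altmann and Christophersen~\cite{altchrCotangent} (the section opens with ``We will list results from~\cite{altchrCotangent}''), so your proposal must be measured against the argument in that source, whose strategy your outline does broadly mirror: use the $\mathbb{Z}^{n+1}$-grading, the fact that $T^1_{A,\mathbf{c}}$ depends only on the supports $a$, $b$, a localization identifying $T^1_{A,\mathbf{a}-\mathbf{b}}$ with a degree $-\mathbf{b}$ invariant of $\mathrm{link}(a,K)$, and then a classification of the admissible $b$. (One small bookkeeping slip: the theorem's outer sum runs over lattice vectors $\mathbf{a}$ with support a face, not over faces; distinct $\mathbf{a}$ with the same support give distinct, isomorphic graded pieces, which is exactly why the thesis later counts, e.g., two contributions for $a=\{1,4\}$, $b=\{2,3,5\}$. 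Your phrase ``summing over faces $a\in K$'' is not literally the decomposition being claimed.)

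The genuine gap is that your step (ii) — which you yourself flag as ``the main obstacle'' — is asserted rather than proved, and it is the entire mathematical content of the theorem. Concretely, a proof must: (a) observe that $\mathrm{Der}(R,A)_{-\mathbf{b}}=0$ for $|b|\geq 2$ (a derivation $x^{\mathbf{e}}\partial/\partial x_i$ has degree $\mathbf{e}-\mathbf{e}_i$, which can equal $-\mathbf{b}$ only when $b=\{i\}$), so that $T^1_{-\mathbf{b}}$ is cut out inside $\mathrm{Hom}_R(I,A)_{-\mathbf{b}}$ purely by the syzygy conditions; (b) analyze the resulting linear conditions on the coefficients $\lambda_p$ attached to the minimal non-faces $p\supseteq b$ with $p\setminus b\in L$, and show they force all $\lambda_p$ equal (one-dimensionality) or zero; (c) supply the topological input converting non-vanishing into the join decomposition — and here your claim that ``removing the $\partial b$-factor from the sphere $|L|$ forces $|L'|$ to be a sphere'' is not automatic: homotopy-theoretic arguments (via $X*Y\simeq \Sigma(X\wedge Y)$) only yield a homology sphere, and one must invoke the PL/combinatorial manifold hypothesis, under which join factors of PL spheres are PL spheres, with the ball case handled by excising the closed star of $b$; and (d) prove the converse, i.e., exhibit a non-trivial class whenever the decomposition $L=L'*\partial b$ (resp.\ $L=L'*\partial b\,\cup\,\partial L'*\overline{b}$) holds — your sketch never constructs this class or verifies it survives the syzygy conditions. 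Your reduction step (i) is likewise only plausible as stated: the monomial bijection $x_p\mapsto x_px^{\mathbf{a}}/x^{\mathbf{b}}$ must be checked to respect syzygies and the non-face structure of the link ($x_px^{\mathbf{a}}/x^{\mathbf{b}}$ is nonzero in $A$ only when $(p\setminus b)\cup a\in K$), which is why in~\cite{altchrCotangent} the localization is a theorem proved via an explicit cohomological formula for the graded pieces, not a two-line observation. As it stands, your text is a correct roadmap of the known proof with its two hard steps left as placeholders, not a proof.
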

The following proposition lists the non trivial parts of $T^1_{<0}(\text{link}(a,X))$.

\newtheorem{listofnontrivial}[alch]{Proposition}
\begin{listofnontrivial}If $K$ is a manifold, then the contributions to $T^1_{<0}(\text{link}(a,X))$ are the ones listed in Table 1.
\begin{table}
\begin{center}
\begin{tabular}{|c|c|c|}
\hline Manifold & K & dim $T^1_{<0}$ \\
\hline
\hline two points & $\partial \Delta_1$ & 1\\
\hline
\hline triangle   & $E_3$ & 4\\
\hline quadrangle & $E_4$ & 2\\
\hline
\hline tetraedron & $\partial \Delta_3$ & 11\\
\hline suspension of triangle & $\Sigma E_3$ & 5\\
\hline octahedron & $\Sigma E_4$ & 3\\
\hline suspension of $n$-gon & $\Sigma E_n$, $n\geq5$ & 1\\
\hline cyclic polytope & $\partial C(n,3)$, $n\geq 6$& 1\\
\hline
\end{tabular}
\end{center}\label{table:T1}
\caption{$T^1$ in low dimensions}
\end{table}
Here $\partial C(n,3)$ is the cyclic polytope defined in section~\ref{simplicial}, and $E_n$ is an n-gon.\label{prop:T1result}
\end{listofnontrivial}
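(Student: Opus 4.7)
My plan is to apply the preceding theorem directly. It writes $T^1_{<0}(K)$ as a direct sum of one-dimensional pieces indexed by subsets $b \subseteq [K]$ with $|b| \geq 2$ satisfying a join decomposition: either $K = L * \partial b$ with $|L|$ a sphere of dimension $n - |b| + 1$ when $b$ is not a face of $K$, or a corresponding decomposition $K = L * \partial b \cup \partial L * \overline{b}$ with $|L|$ a ball of the same dimension when $b$ is a face. Since $K$ arises as the link of a face in a triangulated $3$-sphere, $K$ is itself a sphere of dimension $n \in \{0, 1, 2\}$, so $|b| \leq n + 2$. The proposition therefore reduces to a finite count of valid pairs $(L, b)$ for each candidate $K$.

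I would organize the verification by $(n, |b|)$, simultaneously establishing that each $K$ in the table has the claimed dimension and that no other triangulation contributes. For $n = 0$ only $\partial \Delta_1$ occurs, and the single non-face $b = \{0,1\}$ yields one contribution. For $n = 1$ the non-face case forces $(|L|, |b|) \in \{(\emptyset, 3),\, (S^0, 2)\}$, giving $E_3$ and $E_4$ respectively, while the face case with $|b| = 2$ contributes three more to $E_3$ (one per edge); no $m$-gon with $m \geq 5$ decomposes as a join of smaller spheres. For $n = 2$ the non-face case $|b| = 2$ characterizes suspensions $\Sigma E_m$; the non-face case $|b| = 3$ picks out the equator triangle of $\Sigma E_3$; the non-face case $|b| = 4$ picks out $\partial \Delta_3$; the face case $|b| = 2$ with $L$ a path characterizes the cyclic polytopes $\partial C(m,3)$ and simultaneously supplies the equator-edge contributions of $\Sigma E_3$ and the six edge contributions of $\partial \Delta_3$; and the face case $|b| = 3$ supplies the four facet contributions of $\partial \Delta_3$. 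Tallying these gives $11 = 1 + 4 + 6$ for $\partial \Delta_3$, $5 = 1 + 1 + 3$ for $\Sigma E_3$, $3$ for $\Sigma E_4$ (one per pair of antipodal vertices), and $1$ each for $\Sigma E_m$ with $m \geq 5$ and $\partial C(m,3)$ with $m \geq 6$.

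The main obstacle will be the $n = 2$ exclusion: showing that every triangulation of $S^2$ outside the list admits no valid decomposition. The underlying rigidity is that a join $K = L * \partial b$ forces every vertex of $K$ to lie in the disjoint union $[L] \sqcup b$ and every facet of $K$ to be a join of a facet of $L$ with a facet of $\partial b$, so $K$ is determined by $L$ and $b$; running through the possible $L$ (itself a small sphere or ball) and $b$ (a subset of size at most $4$) yields only the listed combinatorial types. The face-case variant, where the two $n$-balls $L * \partial b$ and $\partial L * \overline{b}$ meet along their common boundary to tile $K$, is equally restrictive. Once both the enumeration and the exclusion are completed, the dimensions in the table follow.
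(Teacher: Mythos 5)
Your proposal is correct, and it is worth noting that it does more than the paper does: the thesis states this proposition with no proof at all, presenting the table as a quotation of results from Altmann and Christophersen, with the preceding Theorem (also quoted from that source) as the only supporting statement. You instead re-derive the table from that Theorem by finite enumeration of admissible pairs $(L,b)$, and your counts are all correct: $1$ for $\partial\Delta_1$ (the unique size-$2$ non-face); $4 = 1+3$ for $E_3$ (the size-$3$ non-face, plus each edge with $L$ the opposite vertex); $2$ for $E_4$ (the two diagonals); $11 = 1+4+6$ for $\partial\Delta_3$ (the full vertex set, the four facets with $L$ the opposite vertex, the six edges with $L$ the opposite edge); $5 = 1+1+3$ for $\Sigma E_3$ (pole pair, equatorial triangle, three equatorial edges); $3$ for the octahedron (the antipodal pairs); and $1$ each for $\Sigma E_m$, $m\ge 5$, and $\partial C(m,3)$, $m\ge 6$. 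Your exclusion step is only sketched, but the rigidity you invoke is the right mechanism and is easy to complete with vertex-degree counts: in either case $L$ is forced to be the induced subcomplex on $[K]\setminus b$, so each $b$ contributes at most once; a non-face pair $b$ must consist of two vertices each adjacent to every vertex except the other, which pins down the suspension structures (exactly one such pair in $\Sigma E_m$ for $m\ge5$, three in the octahedron); and in the face case with $|b|=2$ both endpoints of $b$ must be adjacent to all other vertices, which pins down the unique distinguished edge of $\partial C(m,3)$ for $m\ge 6$ and rules out all remaining $2$-spheres. One small point in your favor: you tacitly corrected a typo in the quoted Theorem, since the face case should be the union $K = (L*\partial b)\cup(\partial L * \overline{b})$ of two balls glued along $\partial L * \partial b$, not an intersection, and your reading is the correct one. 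What your route buys is a self-contained, checkable verification of the table inside the thesis; what the paper's citation buys is brevity and the full generality of the source, of which this table is the low-dimensional slice.
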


A non-geometric way of computing the degree zero part of the $\mathbb{C}$-vector space $T_A^1$ is given in the Macaulay 2 code in Appendix A, when $p$ is an ideal and $T$ is the polynomial ring over a finite field.

\section{Crepant Resolutions and Orbifolds}\label{section:defcrepant}

In this thesis, we will construct Calabi-Yau manifolds by {\it crepant} resolutions of singular varieties. In some cases these singular varieties are {\it orbifolds}. A crepant resolution of a singularity does not affect the dualizing sheaf. In the smooth case, the dualizing sheaf coincides the canonical sheaf, which is trivial for Calabi Yau manifolds. An orbifold is a generalization of a manifold, and it is specified by local conditions. We will give precise definitions below.

\newtheorem{deforbifold}{Definition}[section]
\begin{deforbifold}
A $d$-dimensional variety $X$ is an {\it orbifold} if every $p \in X$ has a neighborhood analytically equivalent to $0 \in U/G$, where $G \subset GL(n,\mathbb{C})$ is a finite subgroup with no complex reflections other than the identity and $U \subset \mathbb{C}^d$ is a $G$-stable neighborhood of the origin.
\end{deforbifold}
A {\it complex reflection} is an element of $GL(n,\mathbb{C})$ of finite order such that $d - 1$ of its eigenvalues are equal to 1. In this case the group $G$ is called a small subgroup of $GL(n, \mathbb{C})$, and $(U/G,0)$ is called a local chart of $X$ at $p$.

Let $X$ be a normal variety such that its canonical class $K_X$ is $\mathbb{Q}$-Cartier, i.e., some multiple of it is a Cartier divisor, and let $f \colon Y \rightarrow X$ be a resolution of the singularities of $X$. Then

$$K_Y = f^* (K_X) + \textstyle\sum a_iE_i$$
where the sum is over the irreducible exceptional divisors, and the $a_i$ are rational numbers, called the {\it discrepancies}.

\newtheorem{defcansing}[deforbifold]{Definition}
\begin{defcansing}If $a_i \geq 0$ for all $i$, then the singularities of $X$ are called {\it canonical singularities}.
\end{defcansing}

\newtheorem{defcrepant}[deforbifold]{Definition}
\begin{defcrepant}A birational projective morphism $f \colon Y\rightarrow X$ with $Y$ smooth and $X$ with at worst Gorenstein canonical singularities is called a {\it crepant resolution} of $X$ if $f^*K_X = K_Y$ (i.e.~if the {\it discrepancy} $K_Y - f^*K_X$ is zero).
\end{defcrepant}

\section{Small resolutions of nodes}\label{section:isolsing}

Let $X$ be a variety obtained from deforming a Stanley-Reisner scheme obtained from a triangulation of the 3-sphere, where the only singularity of $X$ is a node. If there is a plane $S$ passing through the node, contained in $X$, then there exists a crepant resolution $\pi \colon \tilde{X} \rightarrow X$ with $\tilde{X}$ smooth. To see this, consider a smooth point of $X$. As $S$ is smooth, $S$ is a complete intersection, i.e., defined by only one equation. The blow-up along $S$ will thus have no effect as the blow-up will take place in $X\times \mathbb{P}^0$ outside the singular points. The singularity will be replaced by $\mathbb{P}^1$. The resolution is small (in contrast to the big resolution where the singularity is replaced by $\mathbb{P}^1\times \mathbb{P}^1$), i.e.

\begin{displaymath}
\text{codim} \{ x \in X \mid \text{dim} f^{-1}(x) \geq r \} > 2r
\end{displaymath}
\noindent
for all $r > 0$, hence, the dualizing sheaf is left trivial. The resolved manifold $\tilde{X}$ is Calabi-Yau. This result can be generalized to the case with several nodes, and $S$ a smooth surface in $X$ passing through the nodes. For details, see the article by Werner~\cite{werner}, chapter XI.

\chapter{The Quintic Threefold}\label{4simplex}

It is well known that a smooth quintic hypersurface $X \subset \mathbb{P}^4$ is Calabi-Yau. A smooth quintic hypersurface can be obtained by deforming the projective Stanley-Reisner scheme of the boundary of the 4-simplex. Since the only non-face of $\partial \Delta_4$ is $\{0,1,2,3,4 \}$, the Stanley-Reisner ideal $I$ is generated by the monomial $x_0 x_1 x_2 x_3 x_4$ and the Stanley-Reisner ring is

$$A = \mathbb{C}[x_0,\ldots x_4]/(x_0x_1x_2x_3x_4 )\,\, .$$
\noindent
The automorphism group $\text{Aut}(K)$ of the simplicial complex is the symmetric group $S_5$.

Following the outline described in section~\ref{resultsDeforming}, we compute the family of first order deformations. The deformations correspond to perturbations of the monomial $x_0x_1x_2x_3x_4$. Section~\ref{resultsDeforming} describes which choices of the vectors $\bf a$ and $\bf b$ with support $a$ and $b$ give rise to a contribution to the module $T^1_{X}$.

The link of a vertex $a$ is the tetrahedron $\partial \Delta_3$. The only $b$ with $a\cap b = \emptyset$ and $b$ not face is if $|b| = 4$. The case where $b$ is a face and $|b| = 3$ gives 4 choices for each vertex $a$. The case where $b$ is a face and $|b| = 2$ gives 6 choices for each vertex $a$. All in all, the links of vertices give rise to $5\times 11 = 55$ dimensions of the degree 0 part of $T^1_{A}$ (as a $\mathbb{C}$ vector space).

The link of an edge $a$ is the triangle $\partial \Delta_2$. The only $b$ with $a\cap b = \emptyset$ and $b$ not face is if $|b| = 3$. In this case, there are two possible choices of ${\bf a}$ with support $a$ corresponding to a degree 0 element of $\text{Hom}_R (I_0, A)$. The case where $b$ is a face and $|b| =2$ gives 3 choices for each edge $a$. All in all, the links of edges give rise to $10\times 5 = 50$ dimensions of the degree 0 part of $T^1_A$.

We represent each orbit under the action of $S_5$ by a representative $a$ and $b$, and all the orbits are listed in Table~\ref{table:quinticT1}. Note that the monomials $x_i x_j x_k x_l^2$ are derivations, hence give rise to trivial deformations.

\begin{table}
\begin{center}
\begin{tabular}{|c|c|c|c|}
\hline $a$ & $b$ & perturbation & $\#$ in $S_5$-orbit\\
\hline
$\{ 0 \}$ & $\{1,2,3,4\}$ & $x_0^5$ & 5 \\
\hline
$\{ 0 \}$ & $\{1,2,3\}$ & $x_0^4x_4$ & 20 \\
\hline
$\{ 0 \}$ & $\{1,2\}$ & $x_0^3x_3x_4$ & 30 \\
\hline
$\{ 0,1 \}$ & $\{ 2,3,4 \} $ & $x_0^3x_1^2$ & 20\\
\hline
$\{ 0,1 \}$ & $\{ 2,3 \} $ & $x_0^2x_1^2x_4$ & 30\\
\hline
\end{tabular}
\end{center}
\caption{$T^1_{X_0}$ is 105 dimensional for the quintic threefold $X_0$}
\label{table:quinticT1}
\end{table}
We now choose the one parameter $S_5$-invariant family corresponding to $\mathbf{a}$ a vertex (i.e. support $a = \{ j \}$) and $b = [ \text{link}(a,X) ]$, i.e.

$$X_t = \{(x_0,\ldots x_4) \in \mathbb{P}^4 \mid f_t = 0 \}\,\, ,$$
\noindent
where $f_t = tx_0^5 + tx_1^5 + tx_2^5 + tx_3^5 + tx_4^5 + x_0x_1x_2x_3x_4$. To simplify computations, we set

$$f_t = x_0^5 + x_1^5 + x_2^5 + x_3^5 + x_4^5 -5tx_0x_1x_2x_3x_4 \, .$$
This can be viewed as a family $\mathcal{X} \rightarrow \mathbb{P}^1$ with

$$\mathbb{P}(A) = \mathcal{X}_{\infty} = \{  (x_0, \ldots , x_4) \mid \textstyle\prod_i x_i = 0\}$$
\noindent
our original Stanley-Reisner scheme. The natural action of the torus $(\mathbb{C}^*)^{5}$ on $\mathcal{X}_{\infty} \subset \mathbb{P}^{6}$ is as follows. An element $\lambda = (\lambda_0,\ldots, \lambda_{4}) \in (\mathbb{C}^*)^{5}$ sends a point $(x_0,\ldots ,x_4) $ of $\mathbb{P}^4$ to $(\lambda_0x_0,\ldots , \lambda_4 x_4)$. The subgroup $ \{ (\lambda,\ldots, \lambda) | \lambda \in \mathbb{C}^* \}$ acts as the identity on $\mathbb{P}^4$, so we have an action of the quotient torus $T_4 := (\mathbb{C}^*)^{5}/\mathbb{C}^*$. Since $\mathcal{X}_{\infty}$ is generated by a monomial, it is clear that $T_4$ acts on $\mathcal{X}_{\infty}$.

We compute the subgroup $H \subset T_4$ of the quotient torus acting on $\mathcal{X}_{t}$ as follows. Let the element $\lambda = (\lambda_0, \ldots, \lambda_4)$ act by sending $(x_0,\ldots ,x_4)$ to $(\lambda_0x_0, \ldots ,\lambda_4 x_4)$. For $\lambda$ to act on $X_t$, we must have

$$\lambda_0^5 = \lambda_1^5 = \cdots = \lambda_4^5 = \Pi_{i=0}^4 \lambda_{i}\,\, , $$
 \noindent
 hence $\lambda_i = \xi^{a_i}$ where $\xi$ is a fixed fifth root of 1, and $\sum_i a_i = 0 \,(\text{mod }5)$. Hence $H$ is the subgroup of $(\mathbb{Z}/5\mathbb{Z})^5/(\mathbb{Z}/5\mathbb{Z})$ given by


$$\left\{ (a_0, \ldots , a_4 )  \mid \textstyle\sum  a_i = 0 \right\} \,\, .$$
\noindent
 This group acts on $X_t$ diagonally by multiplication by fifth roots of unity, i.e. $(a_0, \ldots a_4) \in  (\mathbb{Z}/5\mathbb{Z})^5$ acts by $$(x_0, \ldots , x_4) \mapsto (\xi^{a_0} x_0, \ldots , \xi^{a_4} x_4 )$$ where $\xi$ is a fixed fifth root of unity.
We would like to understand the singularities of the space $Y_t := X_t /H$. For the Jacobian to vanish in a point $(x_0,\ldots x_4)$ we have to have $x_i^5 = tx_0x_1x_2x_3x_4$, and hence $\Pi x_i^5 = t^5 \Pi x_i^5$. Thus either $t^5 = 1$ or one of the $x_i$ is zero. But if one $x_i$ is zero, then they all are, and thus $(x_0,\ldots, x_4)$ does not represent a point in $\mathbb{P}^4$. If $t^5 \neq 1$, then $X_t$ is nonsingular. If $t^5 = 1$, then $X_t$ is singular in the points $(\xi^{a_0}, \ldots , \xi^{a_4})$ with $\sum a_i  = 0$ modulo 5. Projectively, these points can be written

$$(1, \xi^{-a_0 + a_1}, \xi^{-a_0 + a_2}, \xi^{-a_0 + a_3}, \xi^{3a_0 - a_1 - a_2 - a_3})\,\, .$$
\noindent
This consists of 125 distinct singular points.

From now on assume that $|t| < 1$. The quotient $X_t/H$ is singular at each point $x$ where the stabilizer $H_x$ is nontrivial. A point in $\mathbb{P}^4$ has nontrivial stabilizer in $H$ if at least two of the coordinates are zero. The points of the curves

$$C_{ij} = \{ x_i = x_j = 0 \} \cap X_t $$
have stabilizer of order 5. For example, the stabilizer of a point of the curve $C_{01}$ is generated by $(2,0,1,1,1)$. The points of the set $$P_{ijk} = \{ x_i = x_j = x_k = 0\} \cap X_t$$ have stabilizer of order 25.

It follows from this that the singular locus of $Y_t$ consists of 10 such curves $C_{ij}/H$. We have $C_{ij}/H = \text{Proj}(R^H)$ where

$$R = \mathbb{C}[x_0,\ldots,x_4]/(x_i, x_j, f_t) \,\,\, .$$
For example, for $C_{01}$ the ring $R$ is

$$\mathbb{C}[x_2,x_3, x_4]/(x_2^5 + x_3^5 + x_4^5)\,\, .$$
\noindent
An element $(a_0,\ldots,a_4)\in H$ now acts on this ring by

$$(x_2,x_3,x_4)\mapsto (\xi^{a_2}x_2, \xi^{a_3}x_3, \xi^{a_4}x_4)\,\, ,$$
\noindent
so we have an action of $(\mathbb{Z}/5\mathbb{Z})^3$ on $R$. For a monomial $x_2^ix_3^jx_4^k$ to be invariant under this group action, we have to have $i = j = k = 0\, \text{ mod }5$, hence

$$R^H = \mathbb{C}[y_0, y_1, y_2]/(y_0 + y_1 + y_2)\,\, ,$$
\noindent
where $y_i = x_{i+2}^5$, and $\text{Proj}(R^H) \cong \mathbb{P}^1$. The curves $C_{ij}$ intersect in the points $P_{ijk}/H$.

The singularity $P_{ijk}/H$ locally looks like $\mathbb{C}^3/(\mathbb{Z}/5\mathbb{Z} \oplus \mathbb{Z}/5\mathbb{Z})$, where the element $(a,b) \in \mathbb{Z}/5\mathbb{Z}\oplus \mathbb{Z}/5\mathbb{Z}$ acts by sending $(u,v,w) \in \mathbb{C}$ to $(\xi^au, \xi^b v, \xi^{-a-b} w)$. To see this, consider for example the set $P := P_{012}$. This set consists of 5 points projecting down to the same point in $Y_t$. A neighborhood $U$ of one of these 5 points projects down to $U/H \subset Y_t$. By symmetry, the other singularities $P_{ijk}$ are similar. The set $P$ is defined by the equations $x_0 = x_1 = x_2 = 0$ and $x_3^5 + x_4^5 = 0$. We consider an affine neighborhood of $P$, so we can assume $x_4 = 1$. Set $y_i = \frac{x_i}{x_4}$. Then we have

$$f = y_0^5 + y_1^5 + y_2^5 + y_3^5 + 1 - 5ty_0y_1y_2y_3\,\,  .$$
The points $x_0 = x_1 = x_2 = x_3^5 + x_4^5= 0$ now correspond to $y_0 = y_1 = y_2 = y_3^5 + 1 = 0$. Now set $z_3 = y_3 + 1$ and $z_i = y_i$ for $i = 0,1,2$. Then we have

$$f = z_0^5 + z_1^5 + z_2^5 + z_3^5u - 5tz_0z_1z_2v\,\,  .$$
where $u = 5 - 10z_3 + 10z_3^2 - 5z_3^3 + z_3^4$ and $v = z_3 - 1$ are units locally around the origin. For a fixed $z_3$ with $(z_3 - 1)^5 = -1$, the group $H$ acts on the coordinates $z_0, z_1, z_2$ by $z_i \mapsto \xi^{a_i}z_i$ with $a_0 + a_1 + a_2 = 0 (\text{mod }5)$, hence we get the quotient $\mathbb{C}^3/(\mathbb{Z}/5\mathbb{Z})^2$ with the desired action.

We can describe this situation by toric methods, i.e. we can find a cone $\sigma^{\nu}$ with

$$\mathbb{C}^3/(\mathbb{Z}/5\mathbb{Z})^2 = \text{Proj} \,\mathbb{C}[y_1,y_2,y_3]^H = U_{\sigma^{\nu}}$$
where $U_{\sigma^{\nu}}$ is the toric variety associated to $\sigma^{\nu}$. For a general reference on toric varieties, see the book by Fulton~\cite{fulton}. A monomial $y_1^{\alpha}y_2^{\beta}y_3^{\gamma}$ maps to $\xi^{a\alpha + b\beta - (a + b)\gamma}y_1^{\alpha}y_2^{\beta}y_3^{\gamma}$, hence the monomial is invariant under the action of $H$ if

$$a\alpha + b\beta - (a + b)\gamma = 0 \,(\text{mod}\,5)\,\,\text{for all}\,(a,b)\,\, ,$$
\noindent
i.e. $\alpha = \beta = \gamma \,(\text{mod}\, 5)$. Let $M \subset \mathbb{Z}^3$ be the lattice

$$M:= \{ (\alpha, \beta, \gamma) | \alpha = \beta = \gamma \, (\text{mod}\, 5) \}\,\, .$$
\noindent
The cone $\sigma^{\nu}$ is the first octant in $M\otimes_{\mathbb{Z}}\mathbb{R} \cong \mathbb{Z}^3\otimes_{\mathbb{Z}}\mathbb{R}$. A basis for $M$ is

$$\begin{bmatrix}
1\\
1\\
1\\
\end{bmatrix},
\begin{bmatrix}
5\\
0\\
0\\
\end{bmatrix},
\begin{bmatrix}
0\\
5\\
0\\
\end{bmatrix}.$$
We have

$$\mathbb{C}[M\cap \sigma^{\nu}] = \mathbb{C}[u^5, v^5, w^5, uvw] = \mathbb{C}[x,y,z,t]/(xyz - t^5)\,\, .$$
\noindent
A basis for the dual lattice $N = \text{Hom}(M, \mathbb{Z})$ is

$$\begin{bmatrix}
1/5\\
0\\
-1/5\\
\end{bmatrix},
\begin{bmatrix}
0\\
1/5\\
-1/5\\
\end{bmatrix},
\begin{bmatrix}
0\\
0\\
1\\
\end{bmatrix},$$
\noindent
and the cone $\sigma$ is the first octant in $\mathbb{R}^3 = N\otimes_{\mathbb{R}}\mathbb{R}$. The semigroup $\sigma \cap N$ is spanned by the vectors $1/5 \cdot (\alpha_1, \alpha_2, \alpha_3)$ with $\alpha_i \in \mathbb{Z}$ and $\sum_i \alpha_i = 5$. Figure~\ref{figure:regular} shows a regular subdivision $\Sigma$ of $\sigma$. The inclusion $\Sigma \subset \sigma$ induces a birational map $X_{\Sigma}\rightarrow U_{\sigma}$ on toric varieties. This gives a resolution of a neighborhood of each point $P_{ijk}$. In the local picture in figure~\ref{figure:regular} we have introduced 18 exceptional divisors, where 6 of these blow down to $P_{ijk}$. In addition 12 of the exceptional divisors blow down to the curves $U_{\sigma}\cap C_{ij}$, $U_{\sigma}\cap C_{ik}$ and $U_{\sigma}\cap C_{jk}$, 4 for each of the three curves intersecting in $P_{ijk}$. This gives $10\times 6 + 10 \times 4 = 100$ exceptional divisors.

\begin{figure}
  \begin{center}
  \includegraphics[width=6.5cm]{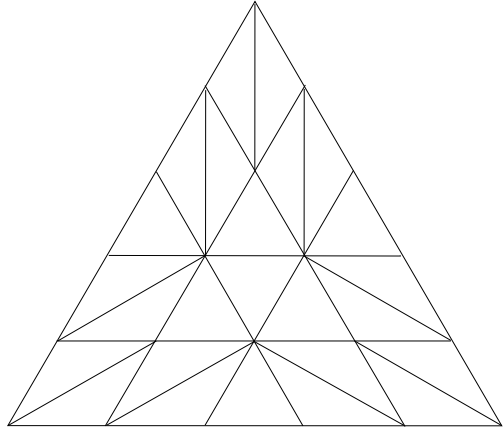}\end{center}
  \caption{Regular subdivision of a neighborhood of the point $P_{ijk}$}\label{figure:regular}
\end{figure}

By this sequence of crepant resolutions we get the desired mirror family $X^\circ_t$. We have $h^{1,1}(X_t) = 1$, $h^{1,2}(X_t) = 101$, $h^{1,1}(X_t^\circ) = 101$ and $h^{1,2}(X_t^\circ) = 1$. For additional details, see the book by Gross, Huybrechts and Joyce~\cite{grosshuybrechtsjoyce}, section 18.2. or the article by Morrison~\cite{morrison}.

\chapter{Hodge numbers of a small resolution of a deformed Stanley-Reisner scheme}\chaptermark{Hodge numbers of a small resolution}\label{ch:cohom}

Let $X = \text{Proj}\,( A)$ be a singular fiber of the versal deformation space of a Stanley-Reisner scheme, with the only singularities of $X$ being a finite number of nodes. Let $\tilde{X}\rightarrow X$ be a small resolution of the singularities. Let $A_i$ be the local rings $\mathcal{O}_{X, P_i}$ where $P_i$ is a node. The Hodge number $h^{1,2}(\tilde{X})$ is the dimension of the kernel of the map $T^1_{A,0}\rightarrow \oplus T^1_{A_i}$. We will prove this in this chapter, and in the next chapter we will apply this result to the non-smoothable case in Section \ref{ex1section}.

We have $\text{dim }H^1(\Theta_{\tilde{X}}) = h^{1,2}(\tilde{X})$ since $H^2(\tilde{X}, \Omega^1) \cong H^1(\tilde{X}, (\Omega^1)^{\nu} \otimes \omega)' \cong H^1(\tilde{X}, \Theta_{\tilde{X}})'$ where the first isomorphism is Serre duality and the second follows from the fact that $\omega_{\tilde{X}}$ is trivial. A general equation for the node is $f = \sum_{i=1}^n x_i^2$. Then we have

\begin{displaymath}
T^1_{A_i} \cong \mathbb{C}[x_1, \ldots, x_n]/(f, \partial f / \partial x_1, \ldots, \partial f / \partial x_n)  \cong \mathbb{C}\,\, .
\end{displaymath}
\noindent
Recall that if $\mathcal{S}$ is a sheaf of rings on a scheme $X$, $\mathcal{A}$ an $\mathcal{S}$-algebra and $\mathcal{M}$ an $\mathcal{A}$-module, we defined the sheaf $\mathcal{T}^i_{\mathcal{A}/\mathcal{S}}(\mathcal{M})$ as the sheaf associated to the presheaf

$$U \mapsto T^i(\mathcal{A}(U)/\mathcal{S}(U); \mathcal{M(U)})$$
\noindent
In this section, let $\mathcal{A} = \mathcal{O}_X$, $\mathcal{M} = \mathcal{A}$ and $S = \mathbb{C}$, and denote by $\mathcal{T}^i_X$ the sheaf $\mathcal{T}^i_{\mathcal{O}_X/\mathbb{C}}(\mathcal{O}_X)$.

\newtheorem{localglobal}{Theorem}[section]
\begin{localglobal}

There is an exact sequence

\begin{displaymath}
\XY
 \xymatrix@1{
0\ar[r] & H^1(\Theta_{\tilde{X}}) \ar[r] & T_{A,\,0}^1\ar[r] & \oplus T_{A_i}^1
 }\,\, ,
\end{displaymath}
\label{th:sequence}
where the map on the right hand side consists of the evaluations of an element of $T_{A,\,0}^1$ in the points $P_i$, and is easy to compute.

\end{localglobal}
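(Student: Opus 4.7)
The plan is to combine Kleppe's isomorphism, the local-to-global spectral sequence for cotangent cohomology on $X$, and the Leray spectral sequence for the small resolution $\pi\colon\tilde{X}\to X$.

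By Lemma~\ref{lemma:kleppe} we may replace $T^1_{A,0}$ by $T^1_X$. The local-to-global spectral sequence
\[ E_2^{p,q}=H^p(X,\mathcal{T}^q_X)\Rightarrow T^{p+q}_X \]
yields the five-term exact sequence
\[ 0 \to H^1(X,\Theta_X) \to T^1_X \to H^0(X,\mathcal{T}^1_X) \to H^2(X,\Theta_X), \]
using $\mathcal{T}^0_X=\Theta_X$. Since $X$ is smooth off the nodes $\{P_i\}$, the sheaf $\mathcal{T}^1_X$ is a skyscraper with stalks $T^1_{A_i}$, so $H^0(X,\mathcal{T}^1_X)\cong\bigoplus_i T^1_{A_i}$ and the middle map is literally evaluation at the nodes.

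It remains to identify $H^1(X,\Theta_X)$ with $H^1(\tilde{X},\Theta_{\tilde{X}})$, which I would read off the Leray spectral sequence for $\pi$ applied to $\Theta_{\tilde{X}}$. The two ingredients needed are $\pi_*\Theta_{\tilde{X}}\cong\Theta_X$ and $R^1\pi_*\Theta_{\tilde{X}}=0$. The first holds because both sheaves are reflexive on the normal threefold $X$ and coincide on the complement of the (codimension three) node set, where $\pi$ is an isomorphism; hence they agree globally. For the second, the formal function theorem reduces the stalk at $P_i$ to $H^1(E_i,\Theta_{\tilde{X}}|_{E_i})$, where $E_i\cong\mathbb{P}^1$ is the exceptional curve. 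The standard local model of the small resolution of a threefold node (Section~\ref{section:isolsing}) gives $N_{E_i/\tilde{X}}\cong\mathcal{O}(-1)\oplus\mathcal{O}(-1)$, and the normal bundle sequence
\[ 0\to\Theta_{E_i}\to\Theta_{\tilde{X}}|_{E_i}\to N_{E_i/\tilde{X}}\to 0, \]
combined with $H^1(\mathbb{P}^1,\mathcal{O}(2))=H^1(\mathbb{P}^1,\mathcal{O}(-1))=0$, forces $H^1(E_i,\Theta_{\tilde{X}}|_{E_i})=0$.

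The main obstacle is the vanishing $R^1\pi_*\Theta_{\tilde{X}}=0$: one must pin down the local picture of the conifold resolution in order to identify the normal bundle of $E_i$, and then invoke the formal function theorem to pass from the exceptional fibre back to the stalk. The reflexivity argument for $\pi_*\Theta_{\tilde{X}}\cong\Theta_X$, together with the remaining spectral sequence bookkeeping, is routine. Splicing Kleppe's isomorphism, the five-term sequence, and the Leray identification produces the asserted exact sequence, and the description of the rightmost map as evaluation at the nodes is automatic from the skyscraper structure of $\mathcal{T}^1_X$.
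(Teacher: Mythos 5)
Your proposal is correct, and its global skeleton (Kleppe's isomorphism $T^1_X \cong T^1_{A,0}$, the five-term sequence of the local-to-global spectral sequence $H^p(X,\mathcal{T}^q_X) \Rightarrow T^{p+q}_X$, the skyscraper identification $H^0(\mathcal{T}^1_X) \cong \bigoplus_i T^1_{A_i}$, and the Leray sequence for $\pi$) coincides with the paper's proof of Theorem~\ref{th:sequence}. Where you genuinely diverge is in the proof of the key input $\pi_*\Theta_{\tilde{X}} \cong \Theta_X$ and $R^1\pi_*\Theta_{\tilde{X}} = 0$ (the paper's Lemma~\ref{lemma:leray}): the paper works entirely in the explicit local model of the conifold, proving $H^1(\tilde{V},\Theta_{\tilde{V}}) = 0$ by exhibiting surjectivity of the \v{C}ech differential on the two-chart cover and constructing the isomorphism $\Theta_V \to H^0(\tilde{V},\Theta_{\tilde{V}})$ by lifting seven explicit generators of $\Theta_V$; you instead argue abstractly that both $\Theta_X$ and $\pi_*\Theta_{\tilde{X}}$ are reflexive and agree off the codimension-three node set (which is sound, since $\pi^{-1}(Z)$ has codimension $\geq 2$ in $\tilde{X}$ for any codimension-$\geq 2$ set $Z$, so $\pi_*$ of the locally free $\Theta_{\tilde{X}}$ is normal and torsion-free), and you kill $R^1\pi_*$ via formal functions and $N_{E_i/\tilde{X}} \cong \mathcal{O}(-1)\oplus\mathcal{O}(-1)$. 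One small compression to flag: the formal function theorem gives the inverse limit of $H^1$ over infinitesimal thickenings of $E_i$, not just $H^1(E_i,\Theta_{\tilde{X}}|_{E_i})$; you need the induction over $0 \to \Theta_{\tilde{X}}\otimes I^n/I^{n+1} \to \Theta_{\tilde{X}}\otimes\mathcal{O}_{E_{n+1}} \to \Theta_{\tilde{X}}\otimes\mathcal{O}_{E_n} \to 0$, but since $I^n/I^{n+1} \cong \operatorname{Sym}^n(\mathcal{O}(1)\oplus\mathcal{O}(1))$ contributes only non-negative twists and $H^1$ of $\Theta_{\tilde{X}}|_{E_i}\otimes\mathcal{O}(n)$ vanishes for all $n \geq 0$ by your normal bundle sequence, this is routine and the positivity you set up is exactly what makes it run. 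The trade-off: the paper's computation is elementary and self-contained, while your argument is shorter, chart-free, and visibly generalizes to any small resolution of ordinary double points on a threefold.
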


\begin{proof}
There is a local-to-global spectral sequence with $E_2^{p,q} = H^p(X, \mathcal{T}_X^q)$ converging to the cotangent cohomology $T^{p+q}_X$. Since $\mathcal{T}^0$ is the tangent sheaf $\Theta_X$, the beginning of the 5-term exact sequence of this spectral sequence is

\begin{displaymath}
\XY
 \xymatrix@1{
0\ar[r] & H^1(\Theta_{X}) \ar[r] & T_{X}^1\ar[r] & H^0(\mathcal{T}_{X}^1)
 }\,\, .
\end{displaymath}
For a general reference on spectral sequences, see e.g. the book by McCleary \cite{mccleary}. By Lemma \ref{lemma:isolsing} we have $H^0(\mathcal{T}_X^1) = \oplus T_{A_i}^1$. For a sheaf $\mathcal{F}$ on $\tilde{X}$, the small resolution $\pi :\tilde{X} \rightarrow X$ gives
a Leray spectral sequence  $H^p(X, R^q\pi_*\mathcal{F})$ converging to $H^n(\tilde{X}, \mathcal{F})$. With $\mathcal{F} = \Theta_{\tilde{X}}$, the beginning of the 5-term exact sequence is

\begin{displaymath}
\XY
 \xymatrix@1{
0\ar[r] & H^1(X, \pi_* \Theta_{\tilde{X}})\ar[r] & H^1(\tilde{X}, \Theta_{\tilde{X}})\ar[r] & H^0(X, R^1 \pi_* \Theta_{\tilde{X}})
} \,\, .
\end{displaymath}
By Lemma \ref{lemma:leray} the last term is zero and $\pi_*\Theta_{\tilde{X}} \cong \Theta_X$, hence we get the isomorphisms $H^1(X, \Theta_X) \cong H^1(X, \pi_* \Theta_{\tilde{X}}) \cong H^1(\tilde{X}, \Theta_{\tilde{X}})$. Lemma \ref{lemma:kleppe} states that $T_X^1 \cong T_{A,0}^1$.
\end{proof}

\newtheorem{isolsing}[localglobal]{Lemma}
\begin{isolsing}If $X$ has only isolated singularities, then $\mathcal{T}_X^1 \cong \oplus T^1_{(X,p)}$.
\label{lemma:isolsing}
\end{isolsing}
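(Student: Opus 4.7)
The plan is to analyze the stalks of $\mathcal{T}^1_X$ and observe that the sheaf is concentrated at the finitely many singular points, so the isomorphism reduces to identifying each nonzero stalk with the corresponding local $T^1$.

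First I would use the fact, noted earlier in the text, that the $T^i$ functors are compatible with localization. Applied to a point $q \in X$, this gives an identification of stalks
\[
(\mathcal{T}^1_X)_q \;\cong\; T^1(\mathcal{O}_{X,q}/\mathbb{C};\, \mathcal{O}_{X,q}) \;=\; T^1_{(X,q)}.
\]
Next I would show that $T^1_{(X,q)} = 0$ whenever $q$ is a smooth point of $X$. This is a standard consequence of the cotangent complex formalism: at a smooth point the local ring $\mathcal{O}_{X,q}$ is formally smooth over $\mathbb{C}$, so its cotangent complex is quasi-isomorphic to the K\"ahler differentials concentrated in degree $0$, and in particular $T^1$ vanishes. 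Equivalently, one can invoke that a regular local $\mathbb{C}$-algebra essentially of finite type is rigid, i.e.\ admits no nontrivial first-order deformations.

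Combining these two steps, the sheaf $\mathcal{T}^1_X$ has vanishing stalks away from the singular locus and stalk $T^1_{(X,p)}$ at each singular point $p$. Since by hypothesis the singularities of $X$ are isolated, the support of $\mathcal{T}^1_X$ is a finite set of closed points $\{P_1,\ldots,P_r\}$. A sheaf of abelian groups on $X$ supported on a finite set of closed points decomposes canonically as the direct sum of its skyscraper summands at each point; explicitly, one picks pairwise disjoint open neighborhoods $U_i$ of the $P_i$, extends the stalk $T^1_{(X,P_i)}$ by the skyscraper sheaf $(i_{P_i})_* T^1_{(X,P_i)}$, and checks on stalks that the natural map from the direct sum to $\mathcal{T}^1_X$ is an isomorphism. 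This yields
\[
\mathcal{T}^1_X \;\cong\; \bigoplus_{i=1}^{r} (i_{P_i})_* T^1_{(X,P_i)},
\]
which is the asserted isomorphism.

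The only delicate point is the vanishing of $T^1$ at smooth points; everything else is bookkeeping with stalks and the standard decomposition of sheaves supported on a discrete set. Once the smooth-point vanishing is granted (via the cotangent complex or directly from the definition of $T^1$ recalled in Section on Deformation Theory, noting that for a polynomial ring $R$ the map $\mathrm{Der}(R,A) \to \mathrm{Hom}_R(I,A)$ is surjective when $A$ is smooth), the rest follows formally.
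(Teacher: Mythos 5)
Your proposal is correct and follows essentially the same route as the paper, whose proof consists of the two-line observation that $\mathcal{T}^1_X$ is the sheaf associated to $U \mapsto T^1_U$ and that $T^1_U = 0$ when $U$ contains no singular point. Your version simply fills in the details the paper leaves implicit --- compatibility of $T^1$ with localization, vanishing of $T^1$ at smooth points by rigidity of regular local rings, and the skyscraper decomposition of a sheaf supported on finitely many closed points.
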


\begin{proof}The sheaf $\mathcal{T}_X^1$ is associated to the presheaf $U \mapsto T^1_U$. If $U$ contains no singular points, then $T_U^1 = 0$.
\end{proof}

\newtheorem{leray}[localglobal]{Lemma}
\begin{leray}
We have $\pi_* \Theta_{\tilde{X}} \cong \Theta_X$ and $R^1 \pi_* \Theta_{\tilde{X}} = 0$.
\label{lemma:leray}
\end{leray}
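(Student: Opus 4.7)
The plan is to reduce both statements to local questions at each node $P_i$. Off the finite singular locus $\{P_1,\dots,P_k\}$ the morphism $\pi$ is an isomorphism, so $R^1\pi_*\Theta_{\tilde{X}}$ is supported on this set, and the natural map $\pi_*\Theta_{\tilde{X}}\to\Theta_X$ (coming from the identification $\pi_*\mathcal{O}_{\tilde{X}}=\mathcal{O}_X$, valid since $X$ is normal and $\pi$ is proper birational) is already an isomorphism there. What remains is to check the vanishing of the stalk $(R^1\pi_*\Theta_{\tilde{X}})_{P_i}$ and to extend the isomorphism across the nodes.

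For the $R^1$ vanishing I would apply the theorem on formal functions. The exceptional curve $E_i=\pi^{-1}(P_i)\cong\mathbb{P}^1$ has normal bundle $N_{E_i/\tilde{X}}\cong\mathcal{O}(-1)^{\oplus 2}$ (the standard computation for the small resolution of an ordinary node as recalled in Section~\ref{section:isolsing}). The tangent sequence
\begin{displaymath}
0\to\mathcal{O}(2)\to\Theta_{\tilde{X}}|_{E_i}\to\mathcal{O}(-1)^{\oplus 2}\to 0
\end{displaymath}
has outer terms with vanishing $H^1$ on $\mathbb{P}^1$. To lift this to infinitesimal neighborhoods, let $\mathcal{I}=\mathcal{I}_{E_i}$; since $\mathcal{I}/\mathcal{I}^2\cong\mathcal{O}(1)^{\oplus 2}$, one has $\mathcal{I}^n/\mathcal{I}^{n+1}\cong\mathrm{Sym}^n(\mathcal{O}(1)^{\oplus 2})\cong\mathcal{O}(n)^{\oplus(n+1)}$. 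Tensoring the tangent sequence by this realizes $\mathcal{I}^n/\mathcal{I}^{n+1}\otimes\Theta_{\tilde{X}}|_{E_i}$ as an extension of $\mathcal{O}(n-1)^{\oplus 2(n+1)}$ by $\mathcal{O}(n+2)^{\oplus(n+1)}$, both of whose $H^1$ vanish for all $n\geq 0$. Induction along the filtration
\begin{displaymath}
0\to\mathcal{I}^n/\mathcal{I}^{n+1}\otimes\Theta_{\tilde{X}}|_{E_i}\to\Theta_{\tilde{X}}|_{E_i^{(n)}}\to\Theta_{\tilde{X}}|_{E_i^{(n-1)}}\to 0
\end{displaymath}
then gives $H^1(E_i^{(n)},\Theta_{\tilde{X}}|_{E_i^{(n)}})=0$ for every $n$. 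Formal functions yields $(R^1\pi_*\Theta_{\tilde{X}})^{\wedge}_{P_i}=0$, and since $R^1\pi_*\Theta_{\tilde{X}}$ is coherent (as $\pi$ is proper), faithful flatness of completion forces the stalk itself to vanish.

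For $\pi_*\Theta_{\tilde{X}}\cong\Theta_X$ I would exploit that both sides are $S_2$ sheaves on the normal threefold $X$ and already agree on the smooth locus $U:=X\setminus\{P_1,\dots,P_k\}$. The sheaf $\Theta_X=\mathcal{H}om(\Omega^1_X,\mathcal{O}_X)$, as the dual of a coherent sheaf on a normal variety, is reflexive, so $\Theta_X=j_*j^*\Theta_X$ for $j\colon U\hookrightarrow X$. For the other side, the complement of $V:=\pi^{-1}(U)$ in $\tilde{X}$ is $\bigcup E_i$, a union of curves in the smooth threefold $\tilde{X}$, hence of codimension $2$; since $\Theta_{\tilde{X}}$ is locally free, Hartogs gives $\Theta_{\tilde{X}}(W)=\Theta_{\tilde{X}}(W\cap V)$ for every open $W\subset\tilde{X}$. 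Unwinding the definitions on an open $U'\subset X$ that meets the node set,
\begin{displaymath}
(\pi_*\Theta_{\tilde{X}})(U')=\Theta_{\tilde{X}}(\pi^{-1}U')=\Theta_{\tilde{X}}(\pi^{-1}(U'\cap U))=\Theta_X(U'\cap U)=\Theta_X(U'),
\end{displaymath}
and these identifications are compatible with the natural map, gluing to a global isomorphism.

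The main obstacle is the normal bundle computation $N_{E_i/\tilde{X}}\cong\mathcal{O}(-1)^{\oplus 2}$, which rests on the explicit description of the small resolution as a blow-up along a smooth surface through the node~\cite{werner}; once this is in hand, the cohomological vanishing and the Hartogs-type argument run mechanically.
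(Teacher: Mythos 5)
Your proof is correct, but it takes a genuinely different route from the paper. The paper argues entirely by explicit local computation: it realizes the small resolution of the node $xy-zw=0$ as the blow-up of $(x,z)$, covers $\tilde{V}$ by the two charts $U_1,U_2$, proves $H^1(\tilde{V},\Theta_{\tilde{V}})=0$ by writing out the \v{C}ech differential $d\colon C^0\to C^1$ in coordinates and exhibiting explicit preimages of the obstructing Laurent tails in $1/u$, and then proves $\pi_*\Theta_{\tilde{X}}\cong\Theta_X$ by listing seven generating vector fields of $\Theta_V$, lifting each to derivations on the two charts, and writing down the inverse $g(D_1,D_2)=\frac{1}{2}(D_1\phi_1+D_2\phi_2)$; the lemma then follows since over the affine $V$ one has $\pi_*\cong H^0$ and $R^1\pi_*\cong H^1$. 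You replace the \v{C}ech computation by the theorem on formal functions applied to the $(-1,-1)$-curve, using $N_{E/\tilde{X}}\cong\mathcal{O}(-1)^{\oplus 2}$, the tangent sequence on $E\cong\mathbb{P}^1$, and the graded pieces $\mathcal{I}^n/\mathcal{I}^{n+1}\cong\mathcal{O}(n)^{\oplus(n+1)}$, and you replace the explicit generator-lifting by a reflexivity/Hartogs argument on the normal threefold $X$ and the smooth $\tilde{X}$. Both halves of your argument are sound; your route is coordinate-free, handles any number of nodes uniformly, and makes transparent \emph{why} the vanishing holds (all the $H^1$'s on $\mathbb{P}^1$ in sight vanish), whereas the paper's computation is elementary and self-contained and produces the isomorphism $\Theta_V\cong H^0(\tilde{V},\Theta_{\tilde{V}})$ explicitly, which is occasionally useful downstream. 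One small point you should patch: the theorem on formal functions computes $(R^1\pi_*\Theta_{\tilde{X}})^{\wedge}_{P_i}$ as the limit over the thickenings cut out by $\mathfrak{m}_{P_i}\mathcal{O}_{\tilde{X}}$, not by powers of $\mathcal{I}_E$, so you must either invoke cofinality of the two filtrations or observe that here they coincide because the scheme-theoretic fibre is reduced --- indeed in the paper's chart $U_1$ with coordinates $(x,u,w)$ one has $\mathfrak{m}\mathcal{O}_{\tilde{V}}=(x,uw,ux,w)=(x,w)=\mathcal{I}_E$, and likewise on $U_2$. With that remark, and the standard normal bundle computation you already flag, your argument is complete.
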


\begin{proof}
$R^1\pi_*\Theta_{\tilde{X}}$ has support in the nodes, so this computation can be done locally. Take an affine neighborhood $V$ of a node, and take the locally small resolution of the node. The node is given by the equation $xy-zw = 0$ in $\mathbb{C}^4$, and $\tilde{V}$ is the blow-up along the ideal $(x,z)$. Hence, $\tilde{V} \subset \{ xU-Tz = 0 \} \subset \mathbb{C}^4 \times \mathbb{P}^1$, where  $(U,T)$ are the coordinates on $\mathbb{P}^1$. We prove first that $H^1(\tilde{V}, \Theta_{\tilde{V}}) = 0$ using \v{C}ech-cohomology. Consider the two maps $U_1$ and $U_2$ given by $T \neq 0$ and $U \neq 0$ respectively. In $U_1$ we have $z = xu$, $y = uw$ and

$$xy - zw = xy - xuw = x(y-uw) = 0\,\, ,$$
where $u$ is the coordinate $U/T$, so the strict transform is given by $y-uw = 0$. Similarly, on the map $U_2$ we get $x = tz$, $w = ty$ and

$$xy - zw = tyz - zw = z(ty-w) = 0\,\, ,$$
where $t$ is the coordinate $T/U$, so the strict transform is given by $ty - w = 0$. On the intersection $U_1 \cap U_2$ we have $t = \frac{1}{u}$, $y=uw$, $z = ux$. The affine coordinate ring of $U_1 \cap U_2$ is $\mathbb{C}[x,u,w, \frac{1}{u}] \cong \mathbb{C}[x,t,w, \frac{1}{t}]$. The differentials $\frac{\partial}{\partial y}$, $\frac{\partial}{\partial z}$, and  $\frac{\partial}{\partial t}$ restricted to the intersection $U_1 \cap U_2$ can be computed as

\begin{equation*}\frac{\partial}{\partial y} = \frac{1}{u}\frac{\partial}{\partial w}\end{equation*}
\begin{equation*}\frac{\partial}{\partial z} = \frac{1}{u}\frac{\partial}{\partial x}\end{equation*}
\begin{equation*}\frac{\partial}{\partial t} = u \left(x\frac{\partial}{\partial x} + w\frac{\partial}{\partial w} - u \frac{\partial}{\partial u} \right)\end{equation*}
\noindent
To see this, apply $\frac{\partial}{\partial y}$, $\frac{\partial}{\partial z}$ and $\frac{\partial}{\partial t}$ on $x$, $w$ and $u$, and keep in mind that we have the relations $x = tz$, $w = ty$ and $tu = 1$.

We prove surjectivity of the map $d: C^0(\tilde{V}) \rightarrow C^1(\tilde{V})$ which sends $(\alpha, \beta) \in \Theta(U_1)\times \Theta(U_2)$ to $(\alpha - \beta) | U_1 \cap U_2$. The elements which do not intersect the image of $\Theta(U_1)\times \{0 \}$ under $d$ are of the form

$$\sum \frac{f_k(x,u,w)}{u^k} \frac{\partial}{\partial x} + \sum \frac{g_k(x,u,w)}{u^k} \frac{\partial}{\partial w} + \sum \frac{h_k(x,u,w)}{u^k} \frac{\partial}{\partial u}$$
\noindent
where $f_k$, $g_k$ and $h_k$ have no term with degree higher than $k-1$ in the variable $u$. The differential $d$ maps

$$-t^{k-1}\frac{\partial}{\partial z} \mapsto \frac{1}{u^k} \frac{\partial}{\partial x}\,\, ,$$
\noindent
and hence
$$p_k(y,z,t) \frac{\partial}{\partial z} \mapsto \frac{f_k(x,u,w)}{u^k}\frac{\partial}{\partial x}\,\, ,$$
where $p_k$ is given by $p_k(y,z,t) = -f_k\left( tz,\frac{1}{t}, ty\right) t^{k-1}$. Similarly we have

$$q_k(y,z,t) \frac{\partial}{\partial y} \mapsto \frac{g_k(x,u,w)}{u^k}\frac{\partial}{\partial w}$$
where $q_k$ is given by $q_k(y,z,t) = -g_k\left( tz, \frac{1}{t}, ty \right) t^{k-1}$. For the last term, we have

$$r_k(y,z,t) \left( y\frac{\partial}{\partial y} + z\frac{\partial}{\partial z} - t\frac{\partial}{\partial t}\right) \mapsto \frac{h_k(x,u,w)}{u^k}\frac{\partial}{\partial u}$$
\noindent
where $r_k(y,z,t) = h_k(tz,\frac{1}{t},ty)t^{k+1}$. Hence $d$ is surjective, and $H^1(\tilde{V}, \Theta_{\tilde{V}}) = 0$.

We construct an isomorphism $\Theta_V \rightarrow H^0(\tilde{V}, \Theta_{\tilde{V}})$ as follows. Consider the map

$$\phi: \Theta_{V} \rightarrow \text{Der}(\mathcal{O}_V, \mathcal{O}_{U_1})\oplus \text{Der}(\mathcal{O}_V, \mathcal{O}_{U_2})$$
given by $D \mapsto (\phi_1 D, \phi_2 D)$, where $\phi_1$ and $\phi_2$ are the inclusions of $\mathcal{O}_V$ into $\mathcal{O}_{U_1}$ and $\mathcal{O}_{U_1}$, respectively. On the generator set $\{ x,y,z,w \}$ they take the following values

\begin{equation*}\phi_1(x) = x,\,\,  \phi_1(y) = uw,\,\, \phi_1(z) = ux,\,\, \phi_1(w) = w\end{equation*}
\begin{equation*}\phi_2(x) = tz,\,\, \phi_2(y) = y ,\,\, \phi_2(z) = z ,\,\, \phi_2(w) = ty\end{equation*}
There is also a map

$$ \text{Der}(\mathcal{O}_{U_1}, \mathcal{O}_{U_1})\oplus \text{Der}(\mathcal{O}_{U_2}, \mathcal{O}_{U_2})  \rightarrow \text{Der}(\mathcal{O}_V, \mathcal{O}_{U_1}) \oplus \text{Der}(\mathcal{O}_V, \mathcal{O}_{U_2})\,\, ,$$
\noindent
which is given by $(D_1, D_2)\mapsto (D_1\phi_1, D_2\phi_2)$. The elements which come from $\Theta_V$ can be lifted to $\oplus_{i=1}^{2} \text{Der}(\mathcal{O}_{U_i}, \mathcal{O}_{U_i})$, and we get elements in $H^0(\tilde{V}, \Theta_{\tilde{V}})$. To see this, note that a generator set for the sheaf $\Theta_V$\label{sidecohom} is

\begin{equation*}
\begin{split}
E = x \frac{\partial}{\partial x} & +  y \frac{\partial}{\partial y}
+ z \frac{\partial}{\partial z} + w\frac{\partial}{\partial w}\\
& y \frac{\partial}{\partial y} - x \frac{\partial}{\partial x}\\
& w \frac{\partial}{\partial y} + x \frac{\partial}{\partial z}\\
& z \frac{\partial}{\partial y} + x \frac{\partial}{\partial w}\\
& y \frac{\partial}{\partial z} + w \frac{\partial}{\partial x}\\
& y \frac{\partial}{\partial w} + z \frac{\partial}{\partial x}\\
& w \frac{\partial}{\partial w} - z \frac{\partial}{\partial z}\\
\end{split}
\end{equation*}
\noindent
They are mapped to the following in $\oplus_{i=1}^2 \text{Der}(\mathcal{O}_V, \mathcal{O}_{U_i})$

\begin{equation*}
\left( x \frac{\partial}{\partial x}  +  uw \frac{\partial}{\partial y}
+ ux \frac{\partial}{\partial z} + w\frac{\partial}{\partial w},\, tz \frac{\partial}{\partial x}  + y \frac{\partial}{\partial y} + z \frac{\partial}{\partial z} + ty\frac{\partial}{\partial w} \right)
\end{equation*}
\begin{equation*}
\left( uw \frac{\partial}{\partial y} - x \frac{\partial}{\partial x},\,
y \frac{\partial}{\partial y} - tz \frac{\partial}{\partial x}  \right)
\end{equation*}
\begin{equation*}
\left( w \frac{\partial}{\partial y} + x \frac{\partial}{\partial z},\,
ty \frac{\partial}{\partial y} + tz \frac{\partial}{\partial z}  \right)
\end{equation*}
\begin{equation*}
\left( ux \frac{\partial}{\partial y} + x \frac{\partial}{\partial w},\,
z \frac{\partial}{\partial y} + tz \frac{\partial}{\partial w} \right)
\end{equation*}
\begin{equation*}
\left( uw \frac{\partial}{\partial z} + w \frac{\partial}{\partial x},\,
y \frac{\partial}{\partial z} + ty \frac{\partial}{\partial x} \right)
\end{equation*}
\begin{equation*}
\left( uw \frac{\partial}{\partial w} + ux \frac{\partial}{\partial x},\,
y \frac{\partial}{\partial w} + z \frac{\partial}{\partial x} \right)
\end{equation*}
\begin{equation*}
\left( w \frac{\partial}{\partial w} - ux \frac{\partial}{\partial z},\,
ty \frac{\partial}{\partial w} - z \frac{\partial}{\partial z} \right)
\end{equation*}

These 7 elements can be lifted to the following elements in $\oplus_{i=1}^2 \text{Der}(\mathcal{O}_{U_i}, \mathcal{O}_{U_i})$.

\begin{equation*}\left( x \frac{\partial}{\partial x}  +  w\frac{\partial}{\partial w}, y \frac{\partial}{\partial y} + z \frac{\partial}{\partial z}\right)\end{equation*}
\begin{equation*}\left( u \frac{\partial}{\partial u} - x \frac{\partial}{\partial x}, y \frac{\partial}{\partial y} - t \frac{\partial}{\partial t}\right)\end{equation*}
\begin{equation*}\left( \frac{\partial}{\partial u}, t \left( y\frac{\partial}{\partial y} + z \frac{\partial}{\partial z} - t \frac{\partial}{\partial t} \right) \right)\end{equation*}
\begin{equation*}\left( x \frac{\partial}{\partial w}, z \frac{\partial}{\partial y}\right)\end{equation*}
\begin{equation*}\left( w \frac{\partial}{\partial x}, y \frac{\partial}{\partial z}\right)\end{equation*}
\begin{equation*}\left( u \left(x \frac{\partial}{\partial x} + w \frac{\partial}{\partial w} - u\frac{\partial}{\partial u} \right), \frac{\partial}{\partial t}\right)\end{equation*}
\begin{equation*}\left( w \frac{\partial}{\partial w} - u \frac{\partial}{\partial u}, t \frac{\partial}{\partial t} - z \frac{\partial}{\partial z}\right)\end{equation*}
\noindent
We can construct an inverse map $g:H^0(\tilde{V}, \Theta_{\tilde{V}}) \rightarrow  \Theta_{V}$ by $$g(D_1, D_2) = \frac{1}{2}(D_1\phi_1 + D_2\phi_2)\,\, .$$
\noindent
Since $\pi_*(\Theta_{\tilde{V}}) \cong R^0\pi_*(\Theta_{\tilde{V}}) \cong H^0(\tilde{V},\Theta_{\tilde{V}})$ and $R^1\pi_*(\Theta_{\tilde{V}}) \cong H^1(\tilde{V},\Theta_{\tilde{V}})$, we get the desired result.
\end{proof}

\chapter{Stanley-Reisner Pfaffian Calabi-Yau 3-folds in $\mathbb{P}^6$}\chaptermark{S-R Pfaffian Calabi-Yau 3-folds in $\mathbb{P}^6$}\label{chapter:srpfaffians}

\section{Triangulations of the 3-sphere with 7 vertices}

In this chapter we look at the triangulations of the 3-sphere with 7 vertices. Table~\ref{table:pol} is copied from the article by Gr\"unbaum and Sreedharan~\cite{sreedharan}, where all the combinatorial types of triangulations of the 3-sphere with 7 or 8 vertices are listed. For each such combinatorial type (from now on referred to as a \textit{triangulation}) we compute the versal deformation space of the corresponding Stanley-Reisner scheme, and we check if the general fiber is smooth. In the smoothable cases, we compute the Hodge numbers of the general fiber. We also compute the automorphism group of the triangulation, and we compute the subfamily of the versal deformation space invariant under this group action. In the non-smoothable case, we construct a small resolution of the nodal singularity of the general fiber.

Let $M = [m_{ij}]$ be a skew-symmetric $d \times d$ matrix (i.e., $m_{ij} = -m_{ji}$) with entries in a ring $R$. One can associate to $M$ an element $\text{Pf}(M)$ in $R$ called the {\it Pfaffian} of $M$: When $d = 2n$ is even, we define the Pfaffian of $M$ by the closed formula

\begin{displaymath}
\text{Pf}(M) = \frac{1}{2^n n!}\underset{\sigma \in S_{2n}}{\textstyle\sum} \text{sgn}(\sigma) \underset{i =1}{\overset{n}{\textstyle\prod}}m_{\sigma(2i-1),\sigma(2i)}
\end{displaymath}
where $S_{2n}$ is the symmetric group on $2n$ elements, and $\textit{sgn}(\sigma)$ is the signature of $\sigma$. When $d$ is odd, we define $\text{Pf}(M) =0$.

The Pfaffian of a skew-symmetric matrix has the property that the square of the pfaffian equals the determinant of the matrix, i.e.

\begin{displaymath}
\text{Pf}(M)^2 = \text{det}(M)\,\,\, .
\end{displaymath}
\noindent
In this chapter the ring $R$ will be the polynomial ring $\mathbb{C}[x_1,\ldots,x_7]$, and we will study ideals generated by such Pfaffians. In this case, the sign of the Pfaffian can be chosen arbitrarily, so it suffices to compute the Pfaffian as one of the square roots of the determinant.

\begin{table}
\begin{center}
\begin{tabular}{|c|c|cc|}
\hline Polytope & Number of facets & List of facets & \\
\hline
&&&\\
$P_1^7$ & 11 & A: 1256 & H: 1367 \\
        &    & B: 1245 & J: 2367 \\
        &    & C: 1234 & K: 2345 \\
        &    & D: 1237 & L: 2356 \\
        &    & E: 1345 &         \\
        &    & F: 1356 &         \\
        &    & G: 1267 &         \\
        &    &         &         \\
$P_2^7$ & 12 & A: 1245 & H: 2356 \\
        &    & B: 1246 & J: 2347 \\
        &    & C: 1256 & K: 2367 \\
        &    & D: 1345 & L: 2467 \\
        &    & E: 1346 & M: 3467 \\
        &    & F: 1356 &         \\
        &    & G: 2345 &         \\
        &    &         &         \\
$P_3^7$ & 12 & A: 1246 & H: 1347 \\
        &    & B: 1256 & J: 2346 \\
        &    & C: 1257 & K: 2356 \\
        &    & D: 1247 & L: 2357 \\
        &    & E: 1346 & M: 2347 \\
        &    & F: 1356 &         \\
        &    & G: 1357 &         \\
        &    &         &         \\
$P_4^7$ & 13 & A: 2467 & H: 1456 \\
        &    & B: 2367 & J: 1247 \\
        &    & C: 1367 & K: 1237 \\
        &    & D: 1467 & L: 1345 \\
        &    & E: 2456 & M: 2345 \\
        &    & F: 2356 & N: 1234 \\
        &    & G: 1356 &         \\
        &    &         &         \\
$P_5^7$ & 14 & A: 1234 & H: 1567 \\
        &    & B: 1237 & J: 2345 \\
        &    & C: 1267 & K: 2356 \\
        &    & D: 1256 & L: 2367 \\
        &    & E: 1245 & M: 3467 \\
        &    & F: 1347 & N: 3456 \\
        &    & G: 1457 & O: 4567 \\
\hline
\end{tabular}
\caption{Polytopes $P_i^7$, $i=1,\ldots ,5$}
\label{table:pol}
\end{center}
\end{table}

For a sequence $i_1, \ldots, i_m$, $1 \leq i_j \leq d$, the matrix obtained from $M$ by omitting rows and columns with indices $i_1, \ldots ,i_m$ is again skew-symmetric; we write $\text{Pf}^{i_1,\ldots ,i_m}(M)$ for its Pfaffian. The elements $\text{Pf}^{i_1,\ldots ,i_m}(M)$ are called Pfaffians of order $d-m$. The Pfaffians of order $d-1$ of a $d\times d$ matrix $M$ are called the {\it principal Pfaffians} of $M$.

\section{Computing the versal family}

The Stanley-Reisner rings obtained from the triangulations in Table \ref{table:pol} are Gorenstein of codimension 3 (in fact, the Stanley-Reisner ring corresponding to any triangulation of a sphere is Gorenstein, see Corollary 5.2, Chapter II, in the book by Stanley~\cite{stanley}). Buchsbaum and Eisenbud proved in Theorem 2.1 (and its proof) in their article~\cite{buchseisen} that Gorenstein codimension 3 ideals are generated by the principal Pfaffians of their skew-symmetric syzygy matrix.

The Stanley-Reisner ideals obtained from the triangulations in Table~\ref{table:pol} are generated by $d = 3,5$ or $7$ monomials. In each case, the following resolution can be computed.

\newtheorem{eksaktsekvensGenerell}{Lemma}[section]
\begin{eksaktsekvensGenerell}
For the Stanley-Reisner ideals $I_0$ obtained from the triangulations in Table~\ref{table:pol}, there is a free resolution of the Stanley-Reisner ring $A = R/I_0$
$$
\XY
 \xymatrix@1{
0\ar[r] & R\ar[r]^{f} & R^d \ar[r]^{M} & R^d \ar[r]^{f^T}
& R \ar[r] & A \ar[r] & 0
 }\,\,\, ,
$$
\noindent
where $f$ is a vector with entries the generators of $I_0$, $M$ is an skew-symmetric $d\times d$ syzygy matrix and $I_0$ is generated by the principal pfaffians of $M$.\label{lemma:eksaktsekvensGenerell}
\end{eksaktsekvensGenerell}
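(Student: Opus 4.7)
The plan is to deduce the resolution directly from the Buchsbaum--Eisenbud structure theorem for Gorenstein ideals of codimension three, once the two hypotheses (Gorenstein, codimension $3$) are verified for each of the ideals arising from the triangulations in Table~\ref{table:pol}.

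First I would establish the codimension. For each of the five triangulations $K$ of the $3$-sphere on the vertex set $\{1,\dots,7\}$, the Stanley--Reisner ring $A_K = R/I_0$ has Krull dimension equal to $\dim|K|+1 = 4$. Since $R = \mathbb{C}[x_1,\dots,x_7]$ has dimension $7$, the ideal $I_0$ has height (codimension) $7-4 = 3$ in every case. Next I would invoke the Gorenstein property: by Stanley's criterion (Corollary~5.2, Chapter~II of \cite{stanley}, already cited in the text), the Stanley--Reisner ring of a triangulation of a sphere is Gorenstein. Hence $A_K$ is a Gorenstein quotient of codimension $3$.

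With these two hypotheses in hand, the Buchsbaum--Eisenbud structure theorem (Theorem~2.1 of \cite{buchseisen}) applies and yields a minimal free resolution of $A$ of the stated shape
\[
\XY\xymatrix@1{ 0\ar[r]&R\ar[r]^{f}&R^d\ar[r]^{M}&R^d\ar[r]^{f^T}&R\ar[r]&A\ar[r]&0,}
\]
where $M$ is a skew-symmetric $d\times d$ matrix and the entries of $f$ are the principal Pfaffians of $M$, i.e.\ the $\mathrm{Pf}^{i}(M)$ for $i=1,\dots,d$. In particular, the generators of $I_0$ (being the entries of $f$, up to sign) are precisely the principal Pfaffians of a skew-symmetric syzygy matrix. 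The number $d$ must be odd (the Pfaffian of an even-sized skew-symmetric matrix is a single element, not a generating set), which is consistent with the values $d = 3,5,7$ observed in the examples.

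The only thing that is not completely automatic is the identification of $d$ with the minimal number of generators of $I_0$ for each specific triangulation. I would determine $d$ directly from the list of non-faces of $K$: the minimal non-faces are in bijection with a minimal monomial generating set of the Stanley--Reisner ideal, so counting them for each of $P_1^7,\dots,P_5^7$ gives the value of $d$ in each case. This is a finite combinatorial check on Table~\ref{table:pol}, and the expected main (minor) obstacle; the structural content of the lemma is otherwise a direct application of Buchsbaum--Eisenbud.
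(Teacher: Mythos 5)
Your proposal is correct and follows essentially the same route as the paper, which likewise establishes that the ideals are Gorenstein of codimension $3$ (citing Corollary~5.2, Chapter~II of Stanley for the Gorenstein property) and then invokes Theorem~2.1 of Buchsbaum--Eisenbud, with the resolution computed explicitly in each of the cases $d=3,5,7$. Your added details --- computing the codimension as $7-\dim A_K = 7-4 = 3$ and identifying $d$ with the number of minimal non-faces --- are exactly the finite checks the paper leaves implicit.
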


In the sections 3.4 - 3.8 we compute the degree zero part of the $\mathbb{C}$-vector space $T_A^1$ as described in Section~\ref{resultsDeforming}. This gives us a new perturbed ideal $I_1$, with $k$ parameters, one for each choice of ${\bf a}$ and ${\bf b}$ that contribute to $T_A^1$ of degree zero. We get a perturbed vector $f^1$ with entries the generators of $I_1$, and we get a new matrix $M^1$ by perturbing the entries of the matrix $M$ in such a way that skew-symmetry is preserved, keeping the entries homogeneous in $x_1, \ldots ,x_7$ such that $M^1\cdot f^1 = 0$ mod $t^2$, where $t$ is the ideal $(t_1,..,t_{k})$. This gives the first order embedded (in $\mathbb{P}^6$) deformations.

It has not yet been possible for computers to deal with free resolutions over rings with many parameters. Finding the matrix $M^1$ can however be done manually, by considering the parameters one by one, perturbing the entries of the matrix $M$ keeping skew-symmetry preserved. The principal pfaffians of the matrix $M^1$ give the versal family up to all orders. This follows from Theorem 9.6 in the book by Hartshorne~\cite{hartshorneDef}. Versality follows from the fact that the Kodaira-Spencer map is surjective, see Proposition 2.5.8 in the book by Sernesi~\cite{sernesi}.

We have computed this family explicitly for these five triangulations from Table~\ref{table:pol}.

\section{Properties of the general fiber}

We will now compute the degrees of the varieties obtained from the triangulations in Table~\ref{table:pol}.

\newtheorem{degree}{Lemma}[section]
\begin{degree}
The number of maximal facets of a triangulation equals the degree of the associated variety.
\end{degree}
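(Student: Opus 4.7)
The plan is to compute the degree of the central fiber $X_0 = \mathbb{P}(K)$ directly from its decomposition into linear subspaces, and then transfer this to the general fiber via flatness of the deformation.

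First I would recall the structural description of $X_0$ given earlier in the preliminaries: the projective Stanley-Reisner scheme $\mathbb{P}(K)$ is a union of projective spaces, one $\mathbb{P}^{|f|-1}$ for each facet $f$ of $K$, with these linear subspaces meeting along lower-dimensional coordinate subspaces exactly as the facets meet in $K$. For a triangulation of the $3$-sphere every facet has four vertices, so each component is a linear $\mathbb{P}^3 \subset \mathbb{P}^n$, hence of degree $1$. The pairwise intersections of distinct facets are faces of dimension $\leq 2$, so the intersections of the corresponding components have dimension $\leq 2$.

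Next I would compute $\deg(X_0)$ from this. Since $X_0$ is equidimensional of dimension $3$, the degree equals the number of points in the intersection of $X_0$ with a generic linear subspace $L \subset \mathbb{P}^n$ of codimension $3$. Each component $\mathbb{P}^3_f$ meets $L$ transversely in exactly one point (since two generic linear subspaces of complementary dimension in $\mathbb{P}^n$ meet in one point). The pairwise intersections $\mathbb{P}^3_f \cap \mathbb{P}^3_{f'}$ have dimension at most $2$, so for generic $L$ they miss $L$ entirely. Therefore $\deg(X_0)$ equals the number of facets of $K$. Equivalently, this can be extracted from the Hilbert polynomial of $A_K$, since for a Stanley-Reisner ring the leading coefficient of the Hilbert polynomial is $f_{d-1}/(d-1)!$, where $f_{d-1}$ is the number of facets.

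Finally, I would invoke flatness. The versal family constructed in Lemma \ref{lemma:eksaktsekvensGenerell} and the surrounding discussion is flat by construction, and the Hilbert polynomial is constant in flat families over a connected base. Hence the degree of every fiber, and in particular the general fiber (a smooth Calabi-Yau threefold when a smoothing exists), equals the degree of $X_0$, namely the number of facets of $K$.

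There is no real obstacle here; the only subtlety is checking that the components do meet a generic codimension-$3$ linear subspace transversely in distinct points and that their pairwise intersections contribute nothing, which follows from the dimension count above. The invocation of flatness to transfer the computation from $X_0$ to the general fiber is the one ingredient that should be stated explicitly, since it is what allows us to speak of ``the degree of the associated variety'' independently of which fiber of the deformation we pick.
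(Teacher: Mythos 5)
Your proof is correct, but it takes a genuinely different route from the paper. The paper's argument is purely algebraic and one line long: it writes down the Hilbert series of the Stanley--Reisner ring,
\[
\sum_{i=-1}^{d-1} \frac{f_i\, t^{i+1}}{(1-t)^{i+1}} \;=\; \frac{1}{(1-t)^d}\sum_{i=-1}^{d-1} (1-t)^{d-i-1} f_i\, t^{i+1}\,,
\]
and reads off the degree by evaluating the numerator at $t=1$, which kills every summand except $f_{d-1}$. Your geometric argument --- decomposing $X_0 = \mathbb{P}(K)$ into one linear $\mathbb{P}^3$ per facet, cutting with a generic codimension-$3$ linear subspace, and checking by a dimension count that the pairwise intersections contribute nothing --- reaches the same conclusion, and your parenthetical remark about the leading coefficient $f_{d-1}/(d-1)!$ of the Hilbert polynomial is in substance the paper's proof. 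What your approach buys is geometric transparency: it makes visible \emph{why} the degree counts facets (one intersection point per component), though it needs the complex to be pure so that $X_0$ is equidimensional, which holds here because $K$ triangulates a sphere; the paper's Hilbert-series computation sidesteps genericity and transversality entirely and is shorter. One further difference of scope: the lemma as stated concerns only the Stanley--Reisner scheme itself, and the paper handles the passage to the general fiber in the sentence immediately following the lemma (``the degree is invariant under deformation''), exactly the flatness argument you spell out; so your inclusion of that step inside the proof is sound but, strictly speaking, proves slightly more than the statement asks.
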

\begin{proof}Let $d$ be the dimension $d = \text{dim} R/I$. The Hilbert series is\begin{displaymath}\textstyle\sum_{i=-1}^{d-1} \frac{f_i t^{i+1}}{(1-t)^{i+1}} = \frac{1}{(1-t)^d}\textstyle\sum_{i=-1}^{d-1} (1-t)^{d-i-1} f_i t^{i+1}\end{displaymath} where $f_i$ is the number of facets of dimension $i$ and $f_{-1} = 1$, see the book by Stanley~\cite{stanley}. The maximal facets have dimension $d-1$. Inserting $t=1$ in the numerator yields the degree $f_{d-1}$.
\end{proof}
The triangulations in Table~\ref{table:pol} give rise to varieties of degree 11, 12, 13 and 14. The degree is invariant under deformation, so in the smoothable cases we can construct Calabi-Yau 3-folds of degree 12, 13 and 14. The following theorem will be proved in Sections 3.4 -- 3.8.

\newtheorem{summarySeven}[degree]{Theorem}
\begin{summarySeven}
Some invariants of the general fiber of the versal deformation space of the Stanley-Reisner rings of the triangulations in Table~\ref{table:pol} are given in Table~\ref{table:summary}.
\end{summarySeven}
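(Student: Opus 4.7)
The plan is to prove the theorem by a case-by-case analysis of the five triangulations $P_1^7,\ldots,P_5^7$ in Table~\ref{table:pol}, one case per section (Sections 3.4--3.8). For each triangulation $K$, I would carry out the following workflow in order. First, read off the Stanley-Reisner ideal $I_0$ from the list of facets by taking the complements of non-faces; then write down a free resolution as in Lemma~\ref{lemma:eksaktsekvensGenerell}, producing the skew-symmetric $d\times d$ syzygy matrix $M$ whose principal Pfaffians generate $I_0$. Here $d\in\{3,5,7\}$ depending on the number of minimal non-faces. Second, compute the degree-zero part of $T^1_A$ using Proposition~\ref{prop:T1result} and the formula of Theorem~1.3.1: enumerate all pairs $(a,b)$ with $a\in K$ a face and $b\subset[\operatorname{link}(a,K)]$ giving a non-trivial contribution to $T^1_{<0}(\operatorname{link}(a,K))$, and keep only those whose associated monomial $x^{\mathbf a}/x^{\mathbf b}$ has degree $0$ times $x_p$ for each generator $x_p$ of $I_0$. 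This produces a finite list of $k$ parameters.

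Third, build the perturbed syzygy matrix $M^1$ by hand, introducing one parameter $t_i$ at a time and adjusting entries of $M$ so that skew-symmetry is preserved and the relation $M^1\cdot f^1\equiv 0\pmod{t^2}$ holds, where $f^1$ is the perturbed generator vector. By the versality criterion cited above (Hartshorne Theorem~9.6 together with surjectivity of the Kodaira-Spencer map, Sernesi Proposition~2.5.8), the principal Pfaffians of $M^1$ give the versal family to all orders. Fourth, analyze the general fiber $X_t=\operatorname{Proj}(A_t)$: check smoothness by computing the singular locus of the Pfaffian ideal on the appropriate affine charts (a Jacobian computation, handled with Macaulay\,2 for generic parameter values), and when $X_t$ is smooth apply Theorem~\ref{theorem:cala} to conclude $X_t$ is a Calabi-Yau 3-fold of degree equal to the number of facets.

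Fifth, compute the Hodge numbers. The degree $\chi(X_t)=2(h^{1,1}-h^{1,2})$ and $h^{1,1}$ are computed via the standard toric/Lefschetz-type input on the ambient projective space together with adjunction, following the exposition in \cite{roedland}; $h^{1,2}$ is then extracted from the dimension of the versal base modulo automorphisms of the embedding, using Lemma~\ref{lemma:kleppe} to identify $T^1_X\cong T^1_{A,0}$. For the one non-smoothable triangulation (which by the preceding discussion is the case whose general fiber has isolated nodes and no smoothing), I would instead apply Theorem~\ref{th:sequence} from Chapter~2: locate a plane $S\subset X_t$ through the nodes (as in Section~\ref{section:isolsing}), construct the small crepant resolution $\tilde X\to X_t$, and compute $h^{1,2}(\tilde X)$ as the dimension of the kernel of the evaluation map $T^1_{A,0}\to\bigoplus_i T^1_{A_i}\cong\mathbb{C}^{\#\mathrm{nodes}}$. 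This is a concrete linear-algebra computation in the already-computed $T^1_{A,0}$.

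The hard part will be two-fold. The genuinely combinatorial obstacle is constructing $M^1$ explicitly with all $k$ parameters, since (as noted after Lemma~\ref{lemma:eksaktsekvensGenerell}) current computer algebra cannot handle free resolutions over rings with many parameters, so each entry must be perturbed by hand while maintaining skew-symmetry and the Pfaffian relations to all orders. The conceptually harder step is the non-smoothable case: identifying the nodal stratum in the versal family, exhibiting the plane $S$ through the nodes to guarantee existence of a small resolution in the sense of Section~\ref{section:isolsing}, and explicitly evaluating the map $T^1_{A,0}\to\bigoplus T^1_{A_i}$. Once these are done for each of the five triangulations, the resulting invariants assemble directly into Table~\ref{table:summary}, proving the theorem.
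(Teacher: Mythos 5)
Your steps one through four, and your treatment of the non-smoothable triangulation $P^7_1$ (plane $S$ through the node, small resolution, $h^{1,2}(\tilde X)=\dim\ker(T^1_{A,0}\to\oplus T^1_{A_i})$ via Theorem~\ref{th:sequence}), reproduce the paper's proof in Sections 3.4--3.8 essentially verbatim, including the citations for versality and the degree-equals-number-of-facets lemma. The genuine gap is your step five, which is precisely where the paper does its real cohomological work. First, $h^{1,1}$: these varieties have codimension $3$ in $\mathbb{P}^6$, so no Lefschetz-type theorem reaches $H^2$ (Barth--Larsen gives $H^i(X)\cong H^i(\mathbb{P}^N)$ only for $i\le 2\dim X - N = 0$ here); a Lefschetz argument would legitimately give $h^{1,1}=1$ only in the two cases $P^7_2$, $P^7_3$ where the general fiber happens to be a $(2,2,3)$ complete intersection, and not for the pfaffian 3-folds of degree $13$ and $14$. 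The paper instead \emph{computes} $h^1(\Omega_X)$ and $h^2(\Omega_X)$ following R{\o}dland: it resolves $\mathcal{O}_X$ by the pfaffian complex (Lemma~\ref{lemma:exsequence2} and its analogues), constructs an explicit four-term resolution of $\mathcal{J}^2_X$ by matrix sheaves (Lemma~\ref{lemma:idealsheaf2}, whose exactness requires a nontrivial argument), and then chases long exact sequences through $0\to\mathcal{J}^2_X\to\mathcal{J}_X\to\mathcal{N}^\vee_X\to 0$, the conormal sequence, and the restricted Euler sequence to land on $h^{1,1}=1$ and $h^{1,2}=73,\,61,\,50$ (Proposition~\ref{proposition:hodgeEx2} and its analogues).

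Second, your proposal to extract $h^{1,2}$ as ``dimension of the versal base modulo automorphisms,'' i.e.\ $\dim T^1_{A,0}-6$, inverts the paper's logic. The identity $\dim T^1_{X_0}=h^1(\Theta_{X_t})+h^0(\Theta_{X_t})$ is stated in the paper only with the words ``one would expect,'' as an a posteriori consistency check \emph{after} the Hodge numbers have been computed independently; it is not proved there, and it is not automatic: $\dim T^1$ is merely upper semicontinuous and can jump between the singular central fiber and a nearby smooth fiber, so relating the $T^1$ of $X_0$ to $h^1(\Theta_{X_t})$ of the general fiber needs an argument (smoothness of the base is not enough by itself to split off exactly the $6$-dimensional $H^0(\Theta_{X_t})$ from Theorem 5.2 of Altmann--Christophersen). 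Used as you propose, the derivation of $h^{1,2}$ would be unjustified --- indeed circular relative to the table you are trying to establish. To close the gap you would need to either carry out the resolution-based cohomology computations of Sections 3.5--3.8, or supply an independent proof of the expected identity together with a separate computation of $h^{1,1}$.
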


\begin{table}
\begin{center}
\begin{tabular}{|c|c|c|c|c|}
\hline
          &        &                   &         &            \\
Polytope  & Degree & General fiber     & Hodge   & Dimension     \\
          &        & in the versal     & numbers & of the versal \\
          &        & deformation space &         & base space    \\
          &        &                   &         &               \\
\hline
&&&&\\
$P_1^7$ & 11 & Isolated nodal           & non-           & 92\\
        &    & singularity with         & smoothable     &   \\
        &    & small resolution         &                &   \\
        &    &                          &                &   \\
$P_2^7$ & 12 & Complete intersection    & $h^{1,1} = 1$  & 79\\
        &    & type $(2,2,3)$           & $h^{1,2} = 73$ &   \\
        &    &                          &                &   \\
$P_3^7$ & 12 & Complete intersection    & $h^{1,1} = 1$  & 79\\
        &    & type $(2,2,3)$           & $h^{1,2} = 73$ &   \\
        &    &                          &                &   \\
$P_4^7$ & 13 & Pfaffians of $5\times 5$ & $h^{1,1} = 1$  & 67\\
        &    & matrix with general      & $h^{1,2} = 61$ &   \\
        &    & quadratic terms in       &                &   \\
        &    & first row/column         &                &   \\
        &    & and general linear       &                &   \\
        &    & terms otherwise          &                &   \\
        &    &                          &                &   \\
$P_5^7$ & 14 & Pfaffians of $7\times 7$ & $h^{1,1} = 1$  & 56\\
        &    & matrix with general      & $h^{1,2} = 50$ &   \\
        &    & linear terms             &                &   \\
        &    &                          &                &   \\
\hline
\end{tabular}
\caption{Polytopes $P_i^7$, $i=1,\ldots ,5$, and their deformations}
\label{table:summary}
\end{center}
\end{table}
Note that the dimension of the versal base space equals $h^{1,2} + 6$ in the four smoothable cases. Theorem 5.2 in~\cite{altchrDeforming} states that there is an exact sequence

$$0 \rightarrow \mathbb{C}^6 \rightarrow H^0(\Theta_{X_t}) \rightarrow H^1(K,\mathbb{C}) \rightarrow 0$$
\noindent
Since the last term is zero, we have $\text{dim\,} H^0(\Theta_X) = 6$. One would expect that $T^1_{X_0} = h^1(\Theta_{X_t}) + h^0(\Theta_{X_0}),$ where $X_t$ is a general fiber and $X_0$ is the central fiber of the versal deformation space.

After we have resolved the singularity in the non-smoothable case, we get a Calabi-Yau manifold with $h^{1,2}(X) = 86$, and since 86 + 6 = 92, this fits nicely also in the non-smoothable case.

\section{The triangulation $P^7_1$}\label{ex1section}

In this section we consider $P^7_1$, the first triangulation of $\mathbb{S}^3$ from Table~\ref{table:pol}. The Stanley-Reisner ideal of this triangulation is

\begin{displaymath}
I_0 = (x_5 x_7,x_4 x_7, x_4 x_6, x_1 x_2 x_3 x_6, x_1 x_2 x_3 x_5)
\end{displaymath}
\noindent
in the polynomial ring $R = \mathbb{C}[x_1, \ldots ,x_7]$. Let $A = R /I_0$ be the Stanley-Reisner ring of $I_0$. In the minimal free resolution in Lemma~\ref{lemma:eksaktsekvensGenerell}, the vector $f$ and the matrix $M$ are given by

$$f =
\begin{bmatrix}
x_5 x_7 \\
x_4 x_7 \\
x_4 x_6 \\
x_1 x_2 x_3 x_6 \\
x_1 x_2 x_3 x_5 \\
\end{bmatrix}\,\,\,$$
and
$$M =
\begin{bmatrix}
0                    & 0        &  -x_1x_2x_3 &  x_4  & 0   \\
0                    & 0        &  0                    &  -x_5 & x_6\\
 x_1x_2x_3 & 0       &  0                     &  0       &  -x_7\\
-x_4                & x_5    &  0                    &  0       &  0  \\
0                    & -x_6  &  x_7                &  0         &  0  \\
\end{bmatrix}\,\,\, .$$
\noindent

Using the results of section~\ref{resultsDeforming}, we compute the module $T^1_X$, i.e. the first order embedded deformations, of the Stanley-Reisner scheme $X$ of the complex $K:=P^7_1$ by considering the links of the faces of the complex. Various combinations of $a,b \in \{ 1,\ldots ,7 \}$, with $b \subset [ \text{link}(a,K)]$ a subset of the vertex set and $a$ a face of $K$, contribute to $T^1_X$.

\begin{figure}
  \begin{center}
  \includegraphics[width=9cm]{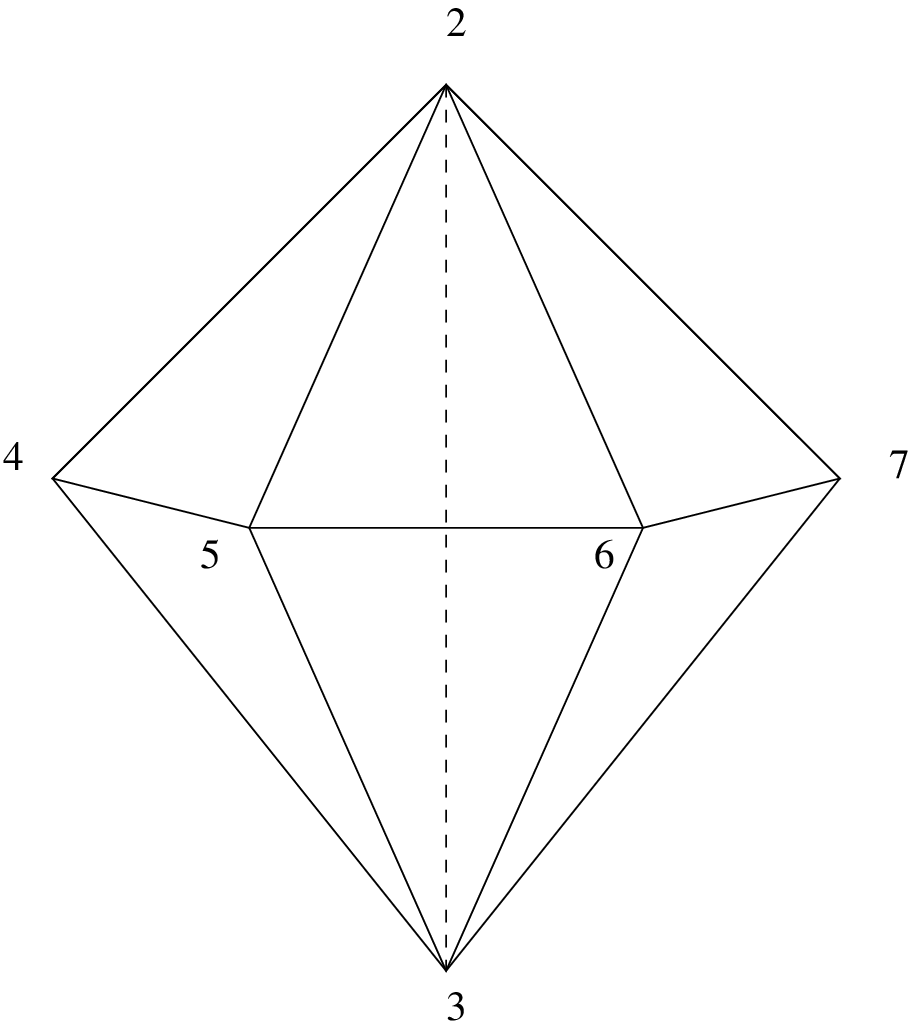}\end{center}
  \caption{The link of the vertex $\{1 \}$ in $P_7^1$}\label{figure:link1}
\end{figure}

The geometric realization $| \text{link}(1,K) |$ of the link of the vertex $\{1\}$ in $K$ is the boundary of a cyclic polytope, and is illustrated in figure~\ref{figure:link1}. The links of the vertices $\{2\}$ and $\{3\}$ are similar.

\begin{figure}
  \begin{center}
  \includegraphics[width=9cm]{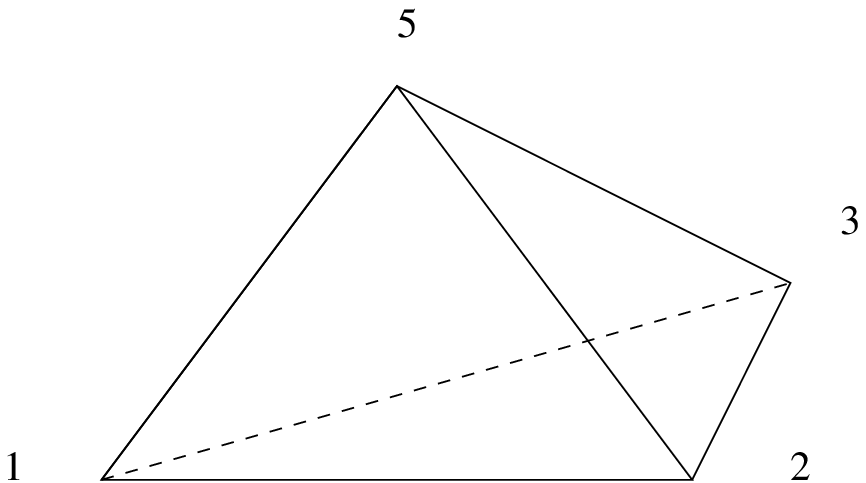}\end{center}
  \caption{The link of the vertex $\{4 \}$ in $P_7^1$}\label{figure:link4}
\end{figure}
\begin{figure}
  \begin{center}
  \includegraphics[width=7.6cm]{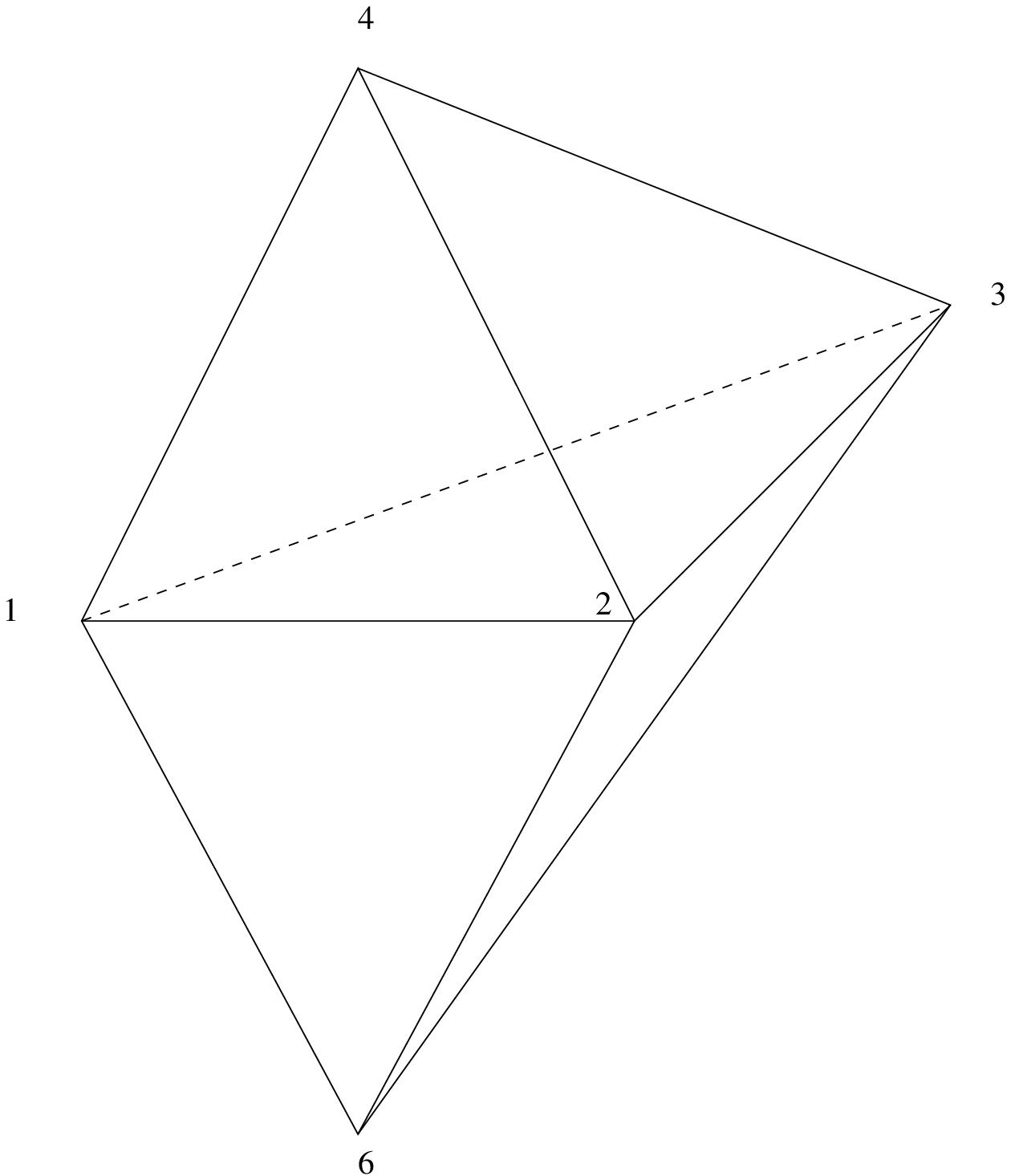}\end{center}
  \caption{The link of the vertex $\{ 5 \}$ in $P_7^1$}\label{figure:link5}
\end{figure}

Two vertices, $\{ 4\}$ and $\{ 7 \}$, give rise to a tetraedron (see figure~\ref{figure:link4}), and two vertices, $\{5\}$ and $\{6 \}$, give rise to a suspension of a triangle (see figure~\ref{figure:link5}). We also consider links of one dimensional faces. In 9 cases, the geometric realization is a triangle. The case of $\{ 1,4 \}\in K$ is illustrated in figure~\ref{figure:link14}. In 6 cases, the link is a quadrangle. The case of $\{ 1,5 \}\in K$ is illustrated in figure~\ref{figure:link15}.

In Proposition~\ref{prop:T1result} the contributions to $T^1_A$ of these different links are listed. In the case with $a=1$, we get a contribution to $T^1_X$ if and only if $b = \{2,3 \}$. As in section~\ref{resultsDeforming}, this gives a homogeneous perturbation the monomial $x_1 x_2 x_3 x_6$ to

$$x_1 x_2 x_3 x_6 + t_1x_1x_2x_3x_6\frac{x^{\mathbf{a}}}{x_b} = x_1 x_2 x_3 x_6 + t_1 x_1^3 x_6\,\,\, ,$$
\noindent
and a homogeneous perturbation of the monomial $x_1x_2x_3x_5$ to

$$x_1 x_2 x_3 x_5 + t_1 x_1 x_2 x_3x_5 \frac{x^{\mathbf{a}}}{x_b} = x_1 x_2 x_3 x_5 + t_1 x_1^3x_5\,\,\, ,$$
 \noindent
 with $\mathbf{a} = (2,0,0,0,0,0,0)$ and hence $x^{\bf{a}} = x_1^2$, and $x_b = x_2x_3$. The other three monomials of the Stanley-Reisner ideal are unchanged. The cases $a = \{2\}$ and $a=\{3\}$ give rise to similar perturbations, with parameters $t_2$ and $t_3$, respectively. In the case $a = \{4\}$, the tetrahedron gives rise to 11 dimensions of $T^1$, one for each $b \subset \{ 1,2,3,5 \}$ with $|b| \geq 2$.  The case $a = \{7\}$ is similar. In each of the two cases $a = \{5\}$ and $a = \{6\}$, the suspension of a triangle gives 5 different choices of $b$ contributing non-trivially to $T^1$. In addition, the 9 triangles give rise to $9\times 4$ permutations, and the 6 quadrangles give rise to $6\times 2$ perturbations. Note that each triangle gives rise to 5 perturbations and not 4 as stated in the table~\ref{table:T1}. To see this, note that since $T^1$ is $\mathbb{Z}^n$ graded, e.g. the case with $a = \{ 1,4 \}$ and $b = \{2,3,5 \} $ gives two different choices of the vector $\mathbf{a}$ in order for the deformation to be embedded in $\mathbb{P}^6$; $\mathbf{a} = (2,0,0,1,0,0,0)$ or $\mathbf{a} =(1,0,0,2,0,0,0)$ both have support $a = \{ 1,4 \}$. Putting all this together, the dimension of $T^1_X$ is

\begin{figure}
  \begin{center}
  \includegraphics[width=5.8cm]{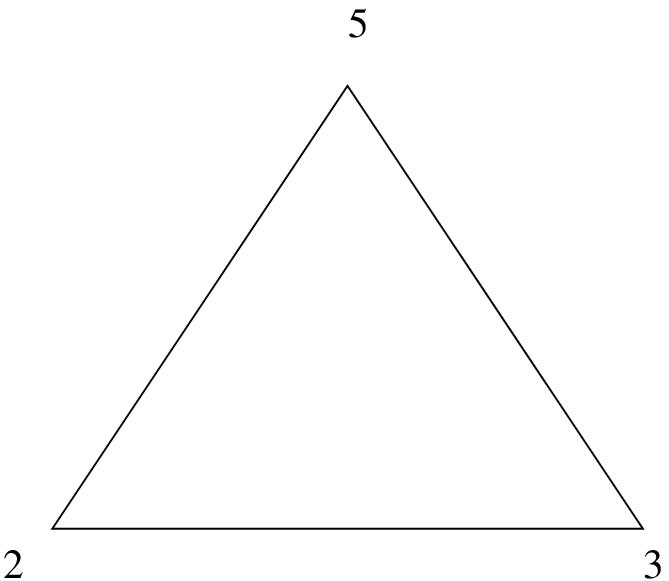}\end{center}
  \caption{The link of the edge $\{1,4 \}$ in $P_7^1$}\label{figure:link14}
\end{figure}
\begin{figure}
  \begin{center}
  \includegraphics[width=4.8cm]{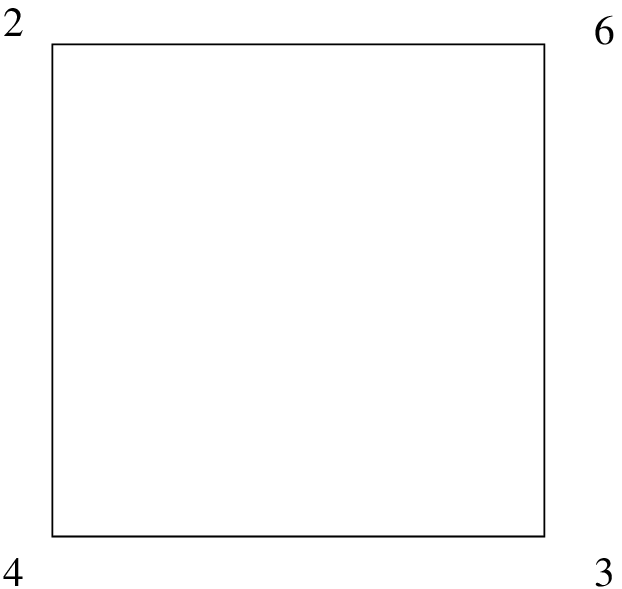}\end{center}
  \caption{The link of the edge $\{1,5 \}$ in $P_7^1$}\label{figure:link15}
\end{figure}

$$3\times 1 + 2\times 11 + 2\times 5 + 9\times 5 + 6\times 2 =92 \,\, .$$
\noindent
This gives 92 parameters $t_1,\ldots, t_{92}$, and the first order deformed ideal $I^1$. The relations between the generators of $I_0$ can be lifted to relations between the generators of $I^1$, and the matrix $M$ lifts to the matrix

\begin{displaymath}M^1 =
\begin{bmatrix}
0  & g_1 & g_2 & l_1 & l_2 \\
-g_1 & 0 & g_3 & l_3 & l_4 \\
-g_2 & -g_3 & 0 & l_5 & l_6 \\
-l_1 & -l_3 & -l_5 & 0 & 0 \\
-l_2 & -l_4 & -l_6 & 0 & 0
\end{bmatrix}\,\,\, ,
\end{displaymath}
\noindent
where $g_1$, $g_2$ and $g_3$ are cubics and $l_1, \ldots, l_6$ are linear forms in the variables $x_1, \ldots, x_7$. This matrix is computed explicitly, and is given in the appendix on page~\pageref{ex1refUttrykk}. The principal pfaffians of $M^1$ give the versal deformation up to all orders.

After a coordinate change we can describe a general fiber $X$ by

\begin{displaymath}\text{rk}
\begin{bmatrix}
x_1 & x_3 & x_5 \\
x_2 & x_4 & x_6
\end{bmatrix}\leq 1 \;\; \text{and}\;\;
\begin{bmatrix}
x_1 & x_3 & x_5 \\
x_2 & x_4 & x_6
\end{bmatrix} \cdot
\begin{bmatrix}
g_3\\
g_2\\
g_1
\end{bmatrix} = 0
\end{displaymath}
\noindent
The first group of equations define the projective cone over the Segre embedding of $\mathbb{P} ^1 \times \mathbb{P}^2$ in $ \mathbb{P}^5$. Call this variety $Y$. There is one singular point on $X_t$, the vertex of the cone $Y$; $P = (0: \cdots : 0:1)$. The singularity is a node. In fact it is locally isomorphic to $x_1x_4 - x_2x_3 = 0$ in $\mathbb{C}^4$. Since $X_t$ is the general fiber in a smooth versal deformation space, $X_0$ cannot be smoothed.

Using the techniques of Section~\ref{section:isolsing}, the intersection of $X$ with the equations $x_3 = x_4 = 0$ gives a smooth surface $S$ containing the point $P$. A crepant resolution $\pi \colon \tilde{X} \rightarrow X$ exists since the only singularity of $X$ is a node, and the plane $S$ passing through the node. Let $\tilde{X}$ be the manifold obtained by blowing up along $S$.

The Macaulay 2 computation on page~\pageref{sideM2code} gives  $\text{dim} T_{A,0}^1 = 86$, and we can compute the evaluation map $T_{A,0}^1\rightarrow T^1_{\mathcal{O}_P}$, which is 0, we have $\text{dim}H^1(\Theta_{\tilde{X}}) = 86$ by Theorem~\ref{th:sequence}.

\section{The triangulation $P^7_2$}\label{ex2section}
In this section we consider $P^7_2$, the second triangulation of
$\mathbb{S}^3$ in Table~\ref{table:pol}. It has Stanley-Reisner ideal

\begin{displaymath}
I_0  = (x_5 x_7, x_1 x_7, x_4 x_5 x_6, x_1 x_2 x_3, x_2 x_3 x_4 x_6)
\end{displaymath}
and the matrix $M$ in the free resolution is

\begin{displaymath}M =
\begin{bmatrix}
0            & 0              & 0             & -x_4x_6 & x_1 \\
0            & 0              & x_2x_3  & 0             & -x_5\\
0            & -x_2x_3  & 0             & x_7        & 0   \\
x_4x_6 & 0              & -x_7        & 0            & 0   \\
-x_1      & x_5          & 0             & 0             & 0
\end{bmatrix}
\end{displaymath}

As in the previous section, we compute the module $T^1_X$, i.e. the first order embedded deformations, of the Stanley-Reisner scheme $X$ of the complex $K:=P^7_2$ by considering the links of the faces of the complex. Various combinations of $a,b \in \{ 1,\ldots ,7 \}$, with $b \subset [ \text{link}(a,K)]$ a subset of the vertex set and $a$ a face of $K$, contribute to $T^1_X$.

The geometric realization $| \text{link}(i,K) |$ of the link $\text{link} (i,K)$ of the vertex $\{i\}$ is the boundary of a cyclic polytope for $i = 2,3,4$ and $6$. For $i = 1$ and $5$, the geometric realization $| \text{link}(i,K) |$ is the suspension of a triangle, and for $i = 7$, $|\text{link}(i,K)|$ is a tetrahedron.

The links of the edges give rise to 8 triangles, 7 quadrangles and 4 pentagons. Hence, the dimension of $T^1_X$ is $4\times 1 + 2\times 5 + 1\times 11 + 8\times 5 + 7\times2 = 79$.

We compute the first order ideal $I_t$ perturbed by 79 parameters. The matrix $M$ lifts to the matrix

\begin{displaymath} M^{1} =
\begin{bmatrix}
0    & -g   & q_1     & -q_2  & x_1    \\
g    & 0    & q_3     & -q_4   & -x_5   \\
-q_1 & -q_3 & 0       & x_7   & t_{38} \\
q_2  &  q_4 & -x_7    & 0     & -t_{33}\\
-x_1 & x_5  & -t_{38} & t_{33}& 0      \\
\end{bmatrix}\,\,\, ,
\end{displaymath}
where $g$ is a cubic and $q_1,\ldots, q_4$ are quadrics in the
variables $x_1,\ldots, x_7$. The exact expressions for these quadrics
are given in the appendix on page~\pageref{ex2refUttrykk}. The versal
deformation space up to all orders is given by the principal pfaffians
of the matrix above. Let $X$ be a general fiber of this family.

\newtheorem{CompleteIn}{Lemma}[section]
\begin{CompleteIn}
The variety $X$ is a complete intersection.\label{lemma:complLemmaet}
\end{CompleteIn}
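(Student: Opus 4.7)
The plan is to exploit the Buchsbaum--Eisenbud syzygies between the five Pfaffian generators of the ideal of $X$ in order to eliminate two of them, exhibiting the remaining three generators as a regular sequence. By the structure theorem already invoked in Lemma~\ref{lemma:eksaktsekvensGenerell}, the columns of $M^{1}$ provide the first syzygies among the signed principal Pfaffians, i.e.~$M^{1}\cdot(P_{1},-P_{2},P_{3},-P_{4},P_{5})^{T}=0$, where $P_{i}$ is the Pfaffian obtained by deleting row and column $i$.

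First I would write out $P_{1},\ldots,P_{5}$ explicitly using the $4\times 4$ Pfaffian formula. Using the degree grading on $M^{1}$ (weights $3/2,3/2,1/2,1/2,-1/2$ on rows/columns $1,\ldots,5$), the Pfaffians have degrees $2,2,3,3,4$; in particular $P_{1},P_{2}$ are quadrics and $P_{3}$ is a cubic, matching the claimed type $(2,2,3)$. The key observation is that the lower-right $2\times 2$ block of $M^{1}$ carries the constant entries $\pm t_{33}$ and $\pm t_{38}$, which are nonzero on a general fiber of the versal family. Row $5$ of the syzygy relation becomes
$$ -x_{1}P_{1}-x_{5}P_{2}-t_{38}P_{3}-t_{33}P_{4}=0,$$
so dividing by $t_{33}$ expresses $P_{4}$ as an $R$-linear combination of $P_{1},P_{2},P_{3}$. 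Row $3$ of the syzygy relation becomes
$$ -q_{1}P_{1}+q_{3}P_{2}-x_{7}P_{4}+t_{38}P_{5}=0,$$
and dividing by $t_{38}$ then expresses $P_{5}$ in $(P_{1},P_{2},P_{3},P_{4})=(P_{1},P_{2},P_{3})$ after substitution.

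Hence the defining ideal of $X$ is generated by $P_{1},P_{2},P_{3}$, of degrees $2,2,3$. Since deformations preserve dimension, $X$ is $3$-dimensional in $\mathbb{P}^{6}$, so its codimension equals $3$, which forces $P_{1},P_{2},P_{3}$ to be a regular sequence; thus $X$ is a complete intersection of type $(2,2,3)$, consistent with $\deg X = 12 = 2\cdot 2\cdot 3$. The main obstacle I anticipate is purely clerical: keeping the signs straight both in the Pfaffian formula itself and in the Buchsbaum--Eisenbud syzygy pattern $(P_{1},-P_{2},P_{3},-P_{4},P_{5})$. Once these signs are aligned, everything reduces to linear algebra over $R$ with $t_{33},t_{38}$ treated as units, and no further geometric input is needed.
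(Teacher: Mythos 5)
Your proof is correct, but it takes a different route from the paper's. The paper proves this lemma by a block factorization of the skew-symmetric matrix: writing $M^1 = \left[\begin{smallmatrix} U & V \\ -V^T & W \end{smallmatrix}\right]$ with $W$ the constant lower-right $2\times 2$ block (invertible on the general fiber since $t_{33}\neq 0$), it factors $M^1$ by a congruence so that the Pfaffian ideal is visibly that of the block-diagonal matrix $\left[\begin{smallmatrix} U+VW^{-1}V^T & 0 \\ 0 & W\end{smallmatrix}\right]$; two principal Pfaffians then vanish and the other three are, up to constants, the entries of the $3\times 3$ skew-symmetric Schur complement $U' = U + VW^{-1}V^T$. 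You instead exploit the same structural feature (the constant entries $t_{33}$, $t_{38}$ of $M^1$) through the Buchsbaum--Eisenbud syzygy relation $M^1\cdot f^1 = 0$, using rows $5$ and $3$ to eliminate $P_4$ and $P_5$ as $R$-linear combinations of $P_1,P_2,P_3$, and then concluding via codimension $3$ plus Cohen--Macaulayness of the polynomial ring that the three survivors form a regular sequence. Your elimination is more elementary---no matrix identity needed---and it makes explicit a step the paper leaves implicit, namely why three generators cutting out a codimension-$3$ variety constitute a regular sequence; your degree bookkeeping (weights $3/2,3/2,1/2,1/2,-1/2$ giving Pfaffian degrees $2,2,3,3,4$, and $\deg X = 12 = 2\cdot 2\cdot 3$) is also a nice consistency check absent from the paper. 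What the paper's factorization buys in exchange is the explicit matrix $U'$, which is not a disposable device: its entries are the actual complete-intersection generators, and $U'$ is reused verbatim in the subsequent resolutions of $\mathcal{O}_X$ and $\mathcal{J}_X^2$ (Lemmas~\ref{lemma:exsequence2} and~\ref{lemma:idealsheaf2}) that drive the Hodge number computation in Proposition~\ref{proposition:hodgeEx2}, so your route would require recovering that data separately. One small remark: by using row $3$ you need both $t_{38}\neq 0$ and $t_{33}\neq 0$ on the general fiber; using row $4$ of the syzygy relation instead (whose $P_5$-coefficient is $\pm t_{33}$) would eliminate $P_5$ with only the single genericity hypothesis $t_{33}\neq 0$ that the paper uses---harmless either way, since both parameters are nonzero generically.
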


\begin{proof}
The lower right corner of the matrix $M^1$ is

\begin{displaymath}
W =
\begin{bmatrix}
0 & k\\
-k & 0\\
\end{bmatrix}\,\,\, ,
\end{displaymath}
\noindent
where $k$ is a constant. The matrix $M^1$ can be written on the form

\begin{displaymath}
\begin{bmatrix}
U      & V\\
-V^{T}  & W\\
\end{bmatrix}=
\begin{bmatrix}
I   & VW^{-1}\\
0 & I\\
\end{bmatrix}
\begin{bmatrix}
U + VW^{-1}V^{T}  & 0\\
0  & W\\
\end{bmatrix}
\begin{bmatrix}
I   & 0\\
(VW^{-1})^{T} & I\\
\end{bmatrix}
\end{displaymath}

Now, the ideal of principal pfaffians can be computed as the principal pfaffians of the matrix at the right center above,
hence two of the generators are now zero. The remaining three pfaffians are the elements of the $3\times 3$ matrix $U' = U + VW^{-1}V^{T}$ multiplied by a constant. Hence, the variety is a complete intersection in $\mathbb{P}^{6}$.
\end{proof}

The five principal pfaffians can be reduced to three, two quadrics and a cubic. The smoothness of a general fiber can be checked for a good choice of the $t_i$ using a computer algebra
package like Macaulay 2~\cite{M2} or Singular~\cite{GPS05}. We will compute the cohomology of the smooth fiber, following the exposition in R\o dland's thesis~\cite{roedland}. The
following lemma will be useful.

\newtheorem{exact2}[CompleteIn]{Lemma}
\begin{exact2}
There is an exact sequence

\begin{displaymath}
\XY
 \xymatrix@1{
0\ar[r] & \mathcal{O}_{\mathbb{P}^6}(-7) \ar[r]^-{v} &
2\mathcal{O}_{\mathbb{P}^6}(-5) \oplus \mathcal{O}_{\mathbb{P}^6}(-4) \ar[r]^-{U'} &   }
\end{displaymath}
\begin{displaymath}
\XY
 \xymatrix@1{
 2\mathcal{O}_{\mathbb{P}^6}(-2) \oplus
\mathcal{O}_{\mathbb{P}^6}(-3)
\ar[r]^-{v^T}&
 \mathcal{O}_{\mathbb{P}^6}
\ar[r] & \mathcal{O}_{X} \ar[r] & 0  }
\end{displaymath}
where $X$ is the general fiber and $v$ is the column vector with entries the three principal pfaffians of $U^{\prime}$.\label{lemma:exsequence2}\end{exact2}

Since $X$ is Calabi-Yau (see Theorem~\ref{theorem:cala}), we know that $h^{1,0}(X)= h^{2,0}(X) = 0$. We now proceed to find the
remaining Hodge numbers of $X$. Let $ \mathcal{J} := \text{ker} ( i^{\sharp}
: \mathcal{O}_{\mathbb{P}^6} \rightarrow i_* \mathcal{O}_{X})$ denote the
ideal sheaf.

\newtheorem{ideal2}[CompleteIn]{Lemma}
\begin{ideal2}
There is a free resolution
\begin{displaymath}
\XY
 \xymatrix@1{
  0 \ar[r] & \mathcal{G} \ar[r]^-{U' \cdot}& \mathcal{H}
  \ar[r]^-{\Phi} & \mathcal{K} \ar[r]^-{v^{\otimes 2}}
  & \mathcal{J}^2_X \ar[r]& 0}
\end{displaymath}
where the sheaves $\mathcal{G}$, $\mathcal{H}$ and $\mathcal{K}$ are given by

\begin{equation*}\mathcal{G} = \mathcal{O}_{\mathbb{P}^6}(-9) \oplus  2\mathcal{O}_{\mathbb{P}^6}(-10) \end{equation*}
\begin{equation*}\mathcal{H} =2\mathcal{O}_{\mathbb{P}^6}(-6) \oplus
4\mathcal{O}_{\mathbb{P}^6}(-7) \oplus 2\mathcal{O}_{\mathbb{P}^6}(-8)\end{equation*}
\begin{equation*}
\mathcal{K} = 3\mathcal{O}_{\mathbb{P}^6} (-4)\oplus 2
  \mathcal{O}_{\mathbb{P}^6} (-5) \oplus \mathcal{O}_{\mathbb{P}^6}(-6)\end{equation*}
\noindent
The elements of $\mathcal{G}$, $\mathcal{H}$ and $\mathcal{K}$ are
regarded as $5 \times 5$-matrices that are skew-symmetric
matrices, general matrices modulo the identity matrix (or with zero
trace), and symmetric matrices respectively. The three maps are

$$U'\cdot : A \mapsto U'A - I/3 \cdot \text{trace}(U'A)\,\,\, ,$$
$$\Phi: B \mapsto BU' + (U')^TB^T\,\,\, ,$$
\noindent
and $$v^{\otimes 2}: C \mapsto v^TCv\,\,\, .$$
\noindent
If viewed modulo the identity, the last term of the map $U'$ may be dropped.

\label{lemma:idealsheaf2}
\end{ideal2}

\begin{proof}
All the compositions are clearly zero, hence it remains to prove that the kernels are contained in the images. The last map, $v^{\otimes 2}$, is surjective, because $\mathcal{J}^2_X$ is generated by the elements of $v^T v$, i.e. the elements $m_{ij} = v_i v_j$ for $i \leq j$.

The relations on the $m_{ij}$ are no other than $m_{ij} = m_{ji}$ and $m_{ij} v_k = m_{jk}v_i$, hence the sequence is exact at $\mathcal{K}$. Next, consider the map $\Phi: \mathcal{H} \rightarrow \mathcal{K}$. We have

\begin{displaymath}
\Phi(B) = BU' + (U')^TB^T = BU' - U' B^T
\end{displaymath}
and hence

\begin{equation*}\Phi(B) = 0\end{equation*}
\begin{equation*}BU' = U' B^T\end{equation*}
\begin{equation*}U' B^T v= 0\,\,\, .\end{equation*}
For some $b$ we have (by Lemma~\ref{lemma:exsequence2})
\begin{equation*}B^T v = bv\end{equation*}
\begin{equation*}(B^T - I b) v = 0\,\,\, ,\end{equation*}
and for some matrix $W$ we have (by Lemma~\ref{lemma:exsequence2} again)
\begin{equation*}B^T - I b = W U' \end{equation*}
\begin{equation*}B =  - U' W^T + bI\,\,\, .\end{equation*}
\noindent
Since $B = -U' W^T + bI$ equals $-U' W^T $ modulo $I$, we have proved that the sequence is exact at $\mathcal{H}$.

Consider the map $U'\cdot : \mathcal{G} \rightarrow \mathcal{H}$. The image of a skew-symmetric matrix $A$ is zero if and only if $U'A = bI$. However, skew-symmetry yields rank less than $3$. So for A to map to zero, we must have $U' A = 0$. However, using the exact sequence of Lemma~\ref{lemma:exsequence2}, we have that $ U' A = 0 \Rightarrow A = v w^T$ for some vector $w$. However, $A = -A^T = -wv^T$, so $U' A = 0 \Rightarrow U' w = 0 \Rightarrow w = gv \Rightarrow A = g v v^t$. However, for $A = g v v^T$ to be skew-symmetric, $g$ must be zero, making $A = 0$. Hence, the map is injective.
\end{proof}

\newtheorem{cohom2igjen}[CompleteIn]{Proposition}
\begin{cohom2igjen}
The Hodge numbers are

\[ h^{1,1}(X)=  1\,\,  \text{and}\,\, h^{1,2}(X) = 73 \,\,\, , \]
\noindent
where $h^{1,1}(X) := \text{dim}\,H^1(\Omega_{X}) $ and $h^{1,2}(X) := \text{dim}\,H^2(\Omega_{X})$.\label{proposition:hodgeEx2}
\end{cohom2igjen}

\begin{proof}
First, we know that $H^*(\mathcal{O}_{\mathbb{P}^6}(-r)) = 0$ for $0< r<7$. Second, if we have a resolution $0 \rightarrow A_n \rightarrow \cdots \rightarrow A_0 \rightarrow I$ where $H^*(A_i) = 0$ for $i<n$, then $H^p(I) \cong H^{p+n}(A_n)$, and third, $h^6(\mathcal{O}_{\mathbb{P}^6}(-r -7)) = h^0(\mathcal{O}_{\mathbb{P}^6}(r)) = \binom{r + 6}{6}$.

Using these facts on the resolution of $\mathcal{O}_X(-1)$ (twist the entire sequence of Lemma~\ref{lemma:exsequence2} by $-1$) we get $h^p(\mathcal{O}_X(-1)) = h^{p+3}(\mathcal{O}_{\mathbb{P}^6}(-8))$ which is $7$ for $p = 3$, otherwise zero. Using these results and the cohomology of $\mathcal{O}_X$ on the long exact sequence of

\begin{displaymath}
\XY
 \xymatrix@1{
0 \ar[r] & \Omega_{\mathbb{P}^6}| X \ar[r] & 7 \mathcal{O}_X(-1) \ar[r]& \mathcal{O}_X \ar[r] &0 }
\end{displaymath}
we find that $h^0(\Omega_{\mathbb{P}^6} | X) = h^2(\Omega_{\mathbb{P}^6} | X) = 0$, $h^1(\Omega_{\mathbb{P}^6} | X) = h^0(\mathcal{O}_X) = 1$, and $h^3(\Omega_{\mathbb{P}^6} | X) = h^3(7\mathcal{O}_X(-1))- h^3(\mathcal{O}_X) = 48$.

For the ideal sheaf $\mathcal{J}_X$, the above results and the resolution~\ref{lemma:exsequence2} give $h^p(\mathcal{J}_X) = h^{p+2}(\mathcal{O}_{\mathbb{P}^6}(-7))$ which is $1$ for $p=4$, otherwise zero. For $\mathcal{J}_X^2$, the resolution splits into two short exact sequences

\begin{displaymath}
\XY
 \xymatrix@1{0 \ar[r] & \mathcal{G} \ar[r]^-{U' \cdot}& \mathcal{H}
\ar[r] &\text{Im}(\Phi)\ar[r] & 0}
\end{displaymath}
and
\begin{displaymath}
\XY
 \xymatrix@1{
 0\ar[r] & \text{Im}(\Phi)\ar[r] & \mathcal{K} \ar[r]^-{v^{\otimes 2}} & \mathcal{J}^2_X \ar[r]& 0}
\end{displaymath}
From the second, we get $h^p(\mathcal{J}_X^2) = h^{p+1}(\text{Im}(\Phi))$. From the first, the only non-zero part of the long exact sequence is

\begin{displaymath}
\XY
 \xymatrix@1{ 0 \ar[r] & H^5(\text{Im}(\Phi)) \ar[r] & H^6(\mathcal{G})\ar[r] & H^6(\mathcal{H})\ar[r] & H^6(\text{Im}(\Phi))\ar[r] & 0}
 \end{displaymath}
 This makes $h^4(\mathcal{J}_{X}^2) - h^5(\mathcal{J}_{X}^2) = h^5(\text{Im}(\Phi)) - h^6(\text{Im}(\Phi)) = h^6(\mathcal{G}) - h^6(\mathcal{H}) = 2h^6(\mathcal{O}_{\mathbb{P}^6}(-9)) + h^6(\mathcal{O}_{\mathbb{P}^6}(-10)) - 2h^6(\mathcal{O}_{\mathbb{P}^6}(-6)) - 4h^6(\mathcal{O}_{\mathbb{P}^6}(-7)) - 2h^6(\mathcal{O}_{\mathbb{P}^6}(-8)) = 2\cdot 28 + 84 - 2\cdot 0 -4\cdot 1 - 2\cdot 7 = 122$. Since the variety is smooth, we have a short exact sequence

\begin{displaymath}
\XY
 \xymatrix@1{0\ar[r] & \mathcal{J}_X^2\ar[r] &  \mathcal{J}_X\ar[r] & \mathcal{N}_X^{\vee} \ar[r] & 0 }
 \end{displaymath}
and another sequence

\begin{displaymath}
\XY
 \xymatrix@1{0\ar[r] & \mathcal{N}_X^{\vee}\ar[r] &  \Omega_{\mathbb{P}^6}| X \ar[r] & \Omega_X \ar[r] & 0 }
\end{displaymath}
Note that $\mathcal{N}_X^{\vee}$ is a sheaf on $X$, hence $h^p(\mathcal{N}_X^{\vee}) = 0$ for $p>3 = \text{dim}X$. Entering this into the long exact sequences of the first of the two resolutions above, we get $h^5(\mathcal{J}_X^2)= 0$ as both $h^4(\mathcal{N}_X^{\vee})= 0$ and $h^5(\mathcal{J}_X)=0$. Hence we have $h^4(\mathcal{J}_X^2) = 122$. In addition, we get $h^2(\mathcal{N}^{\vee}_X)=0$ and $h^3(\mathcal{N}^{\vee}_X) = 121$. The long exact sequence of the second resolution above yields $h^1(\Omega_X) = 1$ and $h^2(\Omega_X) = 121 -48 = 73$.
\end{proof}

Using Singular~\cite{GPS05} (or any other programming language) we can compute the group of automorphisms of the simplicial complexes. It is a subgroup of $S_7$ and is computed by checking which permutations preserve the maximal facets.
The automorphism group $\text{Aut}(P^7_2) \cong D_{4}$ of the complex $P^7_2$ is the dihedral group on 8 elements, i.e. $\mathbb{Z} \ast \mathbb{Z} $
modulo the relations $a^{2} = b^{2} = 1$, $(ab)^{4} = 1$. It is
generated by the elements

\begin{displaymath} a =
\begin{pmatrix}
1 & 2 & 3 & 4 & 5 & 6 & 7\\
1 & 2 & 3 & 6 & 5 & 4 & 7\\
\end{pmatrix}\,\,
\end{displaymath}
\noindent
and
\begin{displaymath} b =
\begin{pmatrix}
1 & 2 & 3 & 4 & 5 & 6 & 7\\
5 & 4 & 6 & 2 & 1 & 3 & 7\\
\end{pmatrix}\, .
\end{displaymath}
\noindent
This group action on the versal family has 22 orbits. Hence, we have an invariant family with 22 parameters, $s_1,\ldots, s_{22}$.

\section{The triangulation $P^7_3$}\label{ex3section}
In this section we consider the third example, $P^7_3$, from Table~\ref{table:pol}. It has Stanley-Reisner ideal

\begin{displaymath}
I_0 = (x_6x_7,x_4x_5,x_1x_2x_3)\,\, ,
\end{displaymath}
and the syzygy matrix is

\begin{displaymath}M =
\begin{bmatrix}
0          & -x_1x_2x_3  & x_4x_5\\
x_1x_2x_3 & 0          & -x_6x_7 \\
-x_4x_5    & x_6x_7    & 0\\
\end{bmatrix}\,\,\, ,
\end{displaymath}
As in sections~\ref{ex1section} and \ref{ex2section}, we compute the module $T^1_X$, i.e. the first order embedded deformations, of the Stanley-Reisner scheme $X$ of the complex $K:=P^7_3$ by considering the links of the faces of the complex. Various combinations of $a,b \in \{ 1,\ldots ,7 \}$, with $b \subset [ \text{link}(a,K)]$ a subset of the vertex set and $a$ a face of $K$, contribute to $T^1_X$.

The geometric realization $| \text{link}(1,K) |$ of the link of the vertex $\{1\}$ in $K$ is an octahedron, and is illustrated in figure~\ref{figure:QuadrangleSusp}. The links of the vertices $\{2\}$ and $\{3\}$ are similar.

\begin{figure}
  \begin{center}
  \includegraphics[width=5.8cm]{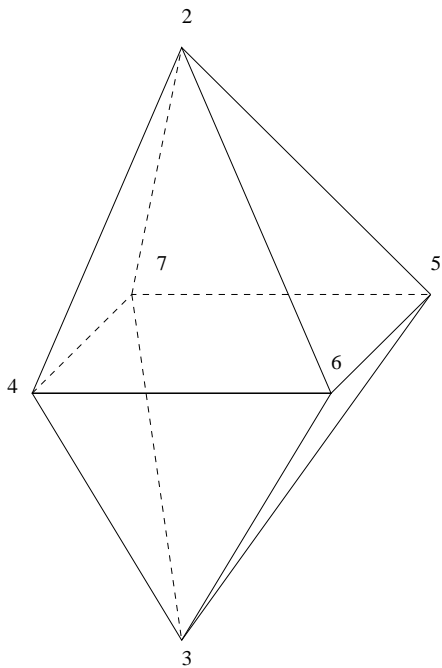}\end{center}
  \caption{The link of the vertex $\{1 \}$ in $P_3^7$}\label{figure:QuadrangleSusp}
\end{figure}

The links of the vertices $\{4\}$, $\{5\}$, $\{6\}$ and $\{7\}$ are the suspension of a triangle. In addition, the links of the edges give rise to 15 quadrangles and 4 triangles. Putting all this together, the dimension of $T^1_X$ is $3\times 3 + 4\times 5 + 15\times 2 + 4\times 5 = 79$.

The perturbed ideal is generated by the elements of the vector

\begin{displaymath}f^1 =
\begin{bmatrix}
 q_2\\
 q_1\\
 g\\
\end{bmatrix}
\end{displaymath}
where the exact expressions for $g$, $q_1$ and $q_2$ are given in the appendix on page~\pageref{ex3refUttrykk}. The syzygy matrix lifts to

\begin{displaymath}M^1 =
\begin{bmatrix}
0    & -g    & q_1\\
g   & 0    & -q_2\\
-q_1 & q_2 & 0  \\
\end{bmatrix}\,\,\, .
\end{displaymath}
\noindent
Thus the ideal generated by $f^1$ gives the versal family up to
all orders. The general fiber $X$ is given by $g=0$, $q_1 =
0$ and $q_2 = 0$, the intersection of $2$ quadrics and a cubic in
$\mathbb{P}^6$, a complete intersection. The smoothness can be checked for a good choice of the $t_i$ using
Singular~\cite{GPS05}. The following lemma will be useful.

\newtheorem{exact3}{Lemma}[section]
\begin{exact3}
The sequence

\begin{displaymath}
\XY
 \xymatrix@1{
0\ar[r] & \mathcal{O}_{\mathbb{P}^6}(-7) \ar[r]^-{F} &
2\mathcal{O}_{\mathbb{P}^6}(-5) \oplus \mathcal{O}_{\mathbb{P}^6}(-4) }
\end{displaymath}
\begin{displaymath}
\XY
 \xymatrix@1{
\ar[r]^-{R^1} & 2\mathcal{O}_{\mathbb{P}^6}(-2)\oplus
\mathcal{O}_{\mathbb{P}^6}(-3) \ar[r]^-{F^t}& \mathcal{O}_{\mathbb{P}^6}
\ar[r] & \mathcal{O}_{X} \ar[r] & 0 }
\end{displaymath}
is exact, where $F$ and $R^1$ are given above, and values for the $t_i$'s are chosen.\label{lemma:exsequence3}\end{exact3}

Since $X$ is Calabi-Yau, we know that $h^{1,0}(X)= h^{2,0}(X) = 0$. The following Proposition can be proved in a similar manner as Proposition~\ref{proposition:hodgeEx2} in the previous section.

\newtheorem{cohom3igjen}[exact3]{Proposition}
\begin{cohom3igjen}The Hodge numbers are

$$h^{1,1}(X) = 1  \,\,\text{and}\,\, h^{1,2}(X) = 73\,\, ,$$
\noindent
where $h^{1,1}(X) := \text{dim}\,H^1(\Omega_{X}) $ and $h^{1,2}(X) := \text{dim}\,H^2(\Omega_{X})$.
\end{cohom3igjen}

Using Singular~\cite{GPS05} (or any other programming language) we can compute the group of automorphisms of the simplicial complexes. It is a subgroup of $S_7$ and is computed by checking which permutations preserve the maximal facets.
The automorphism group $\text{Aut}(P^7_3)$ of the complex $P^7_3$ is $D_{4}\times D_{3}$, where $D_{4}$ is
the dihedral group on 8 elements, i.e. $\mathbb{Z} \ast \mathbb{Z} $
modulo the relations $a^{2} = b^{2} = 1$, $(ab)^{4} = 1$.  The group
$D_{3}$ is the dihedral group on 6 elements, i.e.  $\mathbb{Z} *
\mathbb{Z}$ modulo the relations $c^{3}= 1$, $d^{2}= 1$ and $cdc = d$.
The group $\text{Aut}(P^7_3)$ is generated by the permutations

\begin{displaymath} a =
\begin{pmatrix}
1 & 2 & 3 & 4 & 5 & 6 & 7\\
1 & 2 & 3 & 5 & 4 & 6 & 7\\
\end{pmatrix}
\end{displaymath}
\begin{displaymath} b =
\begin{pmatrix}
1 & 2 & 3 & 4 & 5 & 6 & 7\\
1 & 2 & 3 & 6 & 7 & 4 & 5\\
\end{pmatrix}
\end{displaymath}
\begin{displaymath} c =
\begin{pmatrix}
1 & 2 & 3 & 4 & 5 & 6 & 7\\
2 & 3 & 1 & 4 & 5 & 6 & 7\\
\end{pmatrix}
\end{displaymath}
\begin{displaymath} d =
\begin{pmatrix}
1 & 2 & 3 & 4 & 5 & 6 & 7\\
3 & 2 & 1 & 4 & 5 & 6 & 7\\
\end{pmatrix}
\end{displaymath}
and it has order $48$. If we consider the subfamily invariant under this group action, the original 79 parameters reduce to 10.

\section{The triangulation $P^7_4$}\label{ex4section}

In this section we consider $P^7_4$, the fourth triangulation of $\mathbb{S}^3$ from Table~\ref{table:pol}. The Stanley-Reisner ideal of this triangulation is

$$ I_{0} = (x_5x_7,x_1x_2x_5,x_1x_2x_6,x_3x_4x_6, x_3x_4x_7)\,\,\, . $$
\noindent
in the polynomial ring $R = \mathbb{C}[x_1, \ldots ,x_7]$. Let $A = R /I_0$ be the Stanley-Reisner ring of $I_0$. The minimal free resolution the Stanley-Reisner ring is

\begin{displaymath}
\XY
 \xymatrix@1{
0\ar[r] & R\ar[r]^{f} & R^5 \ar[r]^{M} & R^5 \ar[r]^{f^T}
& R \ar[r] & A \ar[r] & 0
 }\,\,\, ,
\end{displaymath}
\noindent
where $f$ and $M$ are given by

$$f =
\begin{bmatrix}
x_5 x_7 \\
x_1 x_2 x_5 \\
x_1 x_2 x_6 \\
x_3 x_4 x_6 \\
x_3 x_4 x_7 \\
\end{bmatrix} \,\,\, ,$$
\noindent
\begin{displaymath}M =
\begin{bmatrix}
0       & 0    & x_3x_4 & -x_1x_2 &  0\\
0       & 0    & 0      & x_7     & -x_6\\
-x_3x_4 & 0    & 0      & 0       & x_5\\
x_1x_2  & -x_7 & 0      & 0       &  0\\
0       & x_6 & -x_5    & 0       &  0\\
\end{bmatrix}
\end{displaymath}
Computing as in the previous sections, we find the module $T^1_X$, i.e. the embedded versal deformations, of the Stanley-Reisner scheme $X$ of the complex $K:=P^7_4$ by considering the links of the faces of the complex. Various combinations of $a,b \in \{ 1,\ldots ,7 \}$, with $b \subset [ \text{link}(a,K)]$ a subset of the vertex set and $a$ a face of $K$, contribute to $T^1_X$.

The geometric realization $|\text{link}(i,K) |$ of the link of the vertex $\{i\}$ in $K$ for $i = 1,2,3$ and $4$ is the boundary of the cyclic polytope. For $i = 5$ and $7$, the link is the suspension of the triangle, and for $i = 6$, the link is a octahedron. In addition, the links of edges give rise to 4 pentagons, 8 quadrangles and 4 triangles. Putting all this together, the dimension of $T^1_X$ is $4\times 1 + 2\times 5 + 1\times  3 + 6\times5 + 10\times2  = 67$. A general fiber $X$ will be given by the principal pfaffians of the
matrix

\begin{displaymath}M^{1} =
\begin{bmatrix}
 0   &  q_1 &  q_2 &  q_3 & q_4\\
-q_1 &  0   &  l_1 &  l_2 & l_3\\
-q_2 & -l_1 &  0   &  l_4 & l_5\\
-q_3 & -l_2 & -l_4 &  0   & l_6\\
-q_4 & -l_3 & -l_5 & -l_6 & 0\\
\end{bmatrix}
\end{displaymath}
where $q_1, \ldots, q_4$ are general quadrics and $l_1, \ldots, l_6$ are linear terms. The exact expressions for the polynomials in this matrix is given in the appendix. The smoothness of the general fiber can be checked using computer algebra software.

\newtheorem{exact4}{Lemma}[section]
\begin{exact4}
The following sequence

\begin{displaymath}
\XY
 \xymatrix@1{
0\ar[r] & \mathcal{O}_{\mathbb{P}^6}(-7) \ar[r]^-{F} &
\mathcal{O}_{\mathbb{P}^6}(-5)\oplus 4\mathcal{O}_{\mathbb{P}^6}(-4) }
\end{displaymath}
\begin{displaymath}
\XY
 \xymatrix@1{
\ar[r]^-{M^1} & \mathcal{O}_{\mathbb{P}^6}(-2)\oplus
4\mathcal{O}_{\mathbb{P}^6}(-3) \ar[r]^-{F^t} &
\mathcal{O}_{\mathbb{P}^6}
\ar[r] & \mathcal{O}_{X} \ar[r]& 0 }
\end{displaymath}
is exact, where $F$ is the vector with entries the pfaffians
of the matrix $M^1$ mod $t^2$.\label{lemma:exsequence4}
\end{exact4}
Since $X$ is Calabi-Yau, we know that $h^{1,0}(X)= h^{2,0}(X) =
0$. The following Proposition can be proved in a similar manner as Proposition~\ref{proposition:hodgeEx2}.

\newtheorem{cohom4}[exact4]{Proposition}
\begin{cohom4}The Hodge numbers are

$$h^{1,1}(X) = 1\,\,\text{and}\,\, h^{1,2}(X) = 61\,\, ,$$
\noindent
where $h^{1,1}(X):= \text{dim}\,H^1(\Omega_{X})$ and $h^{1,2}(X) := \text{dim}\,H^2(\Omega_{X})$.\label{proposition:ex4cohom}
\end{cohom4}

The group $\text{Aut}(P^7_4)$ of automorphisms of the complex $P^7_4$ is $D_4 $, the dihedral group of 8 elements. It is generated by the permutations

\begin{displaymath} a =
\begin{pmatrix}
1 & 2 & 3 & 4 & 5 & 6 & 7\\
2 & 1 & 3 & 4 & 5 & 6 & 7\\
\end{pmatrix}\,\,
\end{displaymath}
and
\begin{displaymath} b =
\begin{pmatrix}
1 & 2 & 3 & 4 & 5 & 6 & 7\\
3 & 4 & 1 & 2 & 7 & 6 & 5\\
\end{pmatrix}\, .
\end{displaymath}
\noindent
This group action on the versal family has 20 orbits. Hence, we have an invariant family with 20 parameters, $s_1,\ldots, s_{20}$.

\section{The triangulation $P^7_5$}\label{ex5section}

In this section we consider the fifth example, $P^7_5$, from Table~\ref{table:pol}. It has Stanley-Reisner ideal

$$I_0 = (x_1x_3x_5, x_1x_3x_6, x_1x_4x_6, x_2x_4x_6, x_2x_4x_7,
x_2x_5x_7, x_3x_5x_7)\,\,\, ,$$
and the Syzygy matrix is

$$M =
\begin{bmatrix}
0    & 0    & 0    & x_7  & -x_6 & 0    & 0   \\
0    & 0    & 0    & 0    & x_5  & -x_4 & 0   \\
0    & 0    & 0    & 0    & 0    & x_3  & -x_2\\
-x_7 & 0    & 0    & 0    & 0    & 0    & x_1 \\
x_6  & -x_5 & 0    & 0    & 0    & 0    & 0   \\
0    & x_4  & -x_3 & 0    & 0    & 0    & 0   \\
0    & 0    & x_2  & -x_1 & 0    & 0    & 0   \\
\end{bmatrix}\,\,\, .$$
\noindent

Computing as in the previous sections, we find the module $T^1_X$, i.e. the first order embedded deformations, of the Stanley-Reisner scheme $X$ of the complex $K:=P^7_5$ by considering the links of the faces of the complex. Various combinations of $a,b \in \{ 1,\ldots ,7 \}$, with $b \subset [ \text{link}(a,K)]$ a subset of the vertex set and $a$ a face of $K$, contribute to $T^1_X$.

The geometric realization $| \text{link}(i,K) |$ of the link $\text{link} (i,K)$ of a vertex $\{i\}$ is the boundary of a cyclic polytope for $i = 1,\ldots ,7$. We also consider links of one dimensional faces. In 7 cases the geometric realization is a triangle, and in 7 cases the link is a quadrangle. Putting all this together, the dimension of $T^1_X$ is $7\times 1 + 7\times 5 + 7\times 2 = 56$. The full family is displayed in the appendix.

The matrix $M$ lifts to the matrix

$$M^{1} =
\begin{bmatrix}
0    & l_1    & l_2    & x_7     & -x_6    & -l_3   & -l_4   \\
-l_1 & 0      & l_5    & l_6     & x_5     & -x_4   & -l_7   \\
-l_2 & -l_5   & 0      & l_8     & l_9     & x_3    & -x_2  \\
-x_7 & -l_6   & -l_8   & 0       & l_{10}  & l_{11} & x_1   \\
x_6  & -x_5   & -l_9   & -l_{10} & 0       & l_{12} & l_{13}\\
l_3  &  x_4   & -x_3   & -l_{11} & -l_{12} & 0      & l_{14}\\
l_4  &  l_7   & x_2    & -x_1    & -l_{13} & -l_{14} & 0     \\
\end{bmatrix}\,\,\, ,$$
\noindent
where $l_1,\ldots,l_{14}$ are linear forms, whose exact expressions are given in the appendix on page~\pageref{ex5refUttrykk}. The general fiber $X$ is a degree 14 Calabi-Yau 3-fold. The following Proposition can be proved in a similar manner as Proposition \ref{proposition:hodgeEx2}.

\newtheorem{hodge5}{Proposition}[section]
\begin{hodge5}The Hodge numbers are

$$h^{1,1}(X) = 1\,\, \text{and}\,\, h^{1,2}(X) = 50\,\, ,$$
\noindent
where $h^{1,1}(X) := \text{dim}\,H^1(\Omega_{X}) $ and $h^{1,2}(X) = \text{dim}\,H^2(\Omega_{X})$.
\end{hodge5}

The automorphism group of the complex $P^7_5$, is $\text{Aut}(P^7_5) \cong D_7$. It is
generated by the permutations

\begin{displaymath}
a =
\begin{pmatrix}
1 & 2 & 3 & 4 & 5 & 6 & 7\\
2 & 3 & 4 & 5 & 6 & 7 & 1\\
\end{pmatrix}
\end{displaymath}
\begin{displaymath}
b =
\begin{pmatrix}
1 & 2 & 3 & 4 & 5 & 6 & 7\\
7 & 6 & 5 & 4 & 3 & 2 & 1\\
\end{pmatrix}
\end{displaymath}
with relations $a^7 = 1$, $b^2 = 1$, $aba = b$. A calculation gives a 5 parameter invariant deformations under the action of this group. We will consider a one-parameter subfamily of this invariant family in Section~\ref{section:roedlandmirror}.

\chapter{The R\o dland and B\"ohm Mirrors}\label{chapter:boehmroedlandMirror}

In this chapter we will describe how to obtain the R\o dland and B\"ohm mirrors from the triangulations we studied in the previous chapter. The R\o dland mirror is obtained from the complex $P^7_5$, and the B\"ohm mirror is obtained from the complex $P^7_4$. They are given by a crepant resolution of a chosen one-parameter subfamily of the invariant family under the action of the automorphism group of $P^7_i$.

\section{The R\o dland  Mirror Construction}\label{section:roedlandmirror}

Consider the case of $P^7_5$ which we studied in Section~\ref{ex5section}, and let $X_0$ be the Stanley-Reisner scheme associated to this complex. studied in Section~\ref{ex5section}. As seen in the previous chapter, the automorphism group of the complex is $D_7$. Recall from the introductory chapter that the automorphism group induces an action on $T^1_{X_0}$, and that the parameters of the versal family correspond to faces and links contributing to $T^1_{X_0}$. The $D_7$ orbits of these are given in table~\ref{table:roedlandTabell}.

\begin{table}
\begin{center}
\begin{tabular}{|c|c|c|c|}
\hline {a} & {b} & Link & $\#$ in $D_7$-orbit\\
\hline
$\{ 1 \}$ & $\{2,7\}$ & cyclic polytope & 7 \\
\hline
$\{ 1,3 \}$ & $\{2,4,7\}$ & triangle & 14\\
\hline
$\{ 1,3 \}$ & $\{2,7\}$ & triangle & 14 \\
\hline
$\{ 1,3 \}$ & $\{ 4,7 \} $ & triangle & 7\\
\hline
$\{ 3,5 \}$ & $\{ 2,4 \} $ & quadrangle & 14\\
\hline
\end{tabular}
\end{center}
\caption{$T^1_{X_0}$ is 56 dimensional for the Stanley-Reisner scheme $X_0$ of $P^7_5$}
\label{table:roedlandTabell}
\end{table}

All the links of vertices are cyclic polytopes, and all these 7 cyclic polytopes are one orbit under the action of this automorphism group. In addition, we have 7 cases where the link of an edge is a triangle, and we have 7 cases where it is a quadrangle.

The invariant parameters $s_1,\ldots ,s_5$ are achieved by equating the parameters $t_i$ corresponding to the same orbit under the action of the automorphism group on $T^1_{X_0}$. Consider one of the invariant parameters corresponding to the links of edges being triangles, $s := t_{24} = t_{25} = t_{29} = t_{33} = t_{35} = t_{38} = t_{40}$, the one with 7 elements in the orbit, and set the other ones to $0$. In this case, the matrix $M^1$ will reduce to

$$\begin{bmatrix}
0      & 0        & 0        & x_7       & -x_6    & 0        & -sx_4   \\
0      & 0        & sx_7   & 0          & x_5      & -x_4   & 0  \\
0      &  -sx_7 & 0        & sx_5     & 0         & x_3    & -x_2  \\
-x_7 & 0         & -sx_5 & 0          & sx_3    & 0        & x_1   \\
x_6  & -x_5    & 0         & -sx_3   & 0         & sx_1   & 0\\
0      &  x_4    & -x_3    & 0          & -sx_1  & 0         & sx_6\\
sx_4 &  0       & x_2      & -x_1     & 0         & -sx_6   & 0     \\
\end{bmatrix}\,\,\, .$$
Let $X_s$ be the variety generated by the principal pfaffians of this matrix. It is defined by the ideal generated by the polynomials

\begin{equation*}p_1 = \ -x_{{1}}x_{{3}}x_{{5}}+{s}^{2}x_{{6}}{x_{{5}}}^{2}+{s}^{2}{x_{{1}}}^{2}x_{{7}}-sx_{{2}}x_{{3}}x_{{4}} +{s}^{3}x_{{3}}x_{{6}}x_{{7}}\end{equation*}
\begin{equation*}p_2 =  -x_{{1}}x_{{6}}x_{{3}}-sx_{{1}}x_{{2}}x_{{7}}+{s}^{2}{x_{{3}}}^{2}x_{{4}} +{s}^{2}{x_{{6}}}^{2}x_{{5}}+{s}^{3}x_{{1}}x_{{5}}x_{{4}} \end{equation*}
\begin{equation*}p_3 = -x_{{1}}x_{{6}}x_{{4}}+{s}^{2}x_{{3}}{x_{{4}}}^{2}-sx_{{6}}x_{{5}}x_{{7}} \end{equation*}
\begin{equation*}p_4 = {s}^{3}x_{{1}}x_{{4}}x_{{7}}-x_{{2}}x_{{4}}x_{{6}}-x_{{3}}x_{{5}}sx_{{4}} +{s}^{2}x_{{7}}{x_{{6}}}^{2}\end{equation*}
\begin{equation*}p_5 = -x_{{2}}x_{{4}}x_{{7}}+{x_{{4}}}^{2}{s}^{2}x_{{5}}+{s}^{2}x_{{6}}{x_{{7}}}^{2} \end{equation*}
\begin{equation*}p_6 = {x_{{5}}}^{2}{s}^{2}x_{{4}}-x_{{7}}x_{{2}}x_{{5}}-x_{{7}}sx_{{1}}x_{{6}} +{s}^{3}x_{{3}}x_{{4}}x_{{7}} \end{equation*}
\begin{equation*}p_7 = {x_{{7}}}^{2}{s}^{2}x_{{1}}-x_{{3}}x_{{5}}x_{{7}}-sx_{{4}}x_{{5}}x_{{6}}
\end{equation*}
\noindent
This variety has 56 nodes. Choosing the nonzero parameter as one of the other two parameters corresponding to triangles, gives a smooth general fiber, or a general fiber with singular locus of dimension 0 and degree 189, after a Macaulay 2 computation~\cite{M2}.

There is also a natural action of the torus $(\mathbb{C}^*)^{7}$ on $X_0 \subset \mathbb{P}^{6}$ as follows. An element $\lambda = (\lambda_1,\ldots, \lambda_{7}) \in (\mathbb{C}^*)^{7}$ sends a point $(x_1,\ldots ,x_7) $ of $\mathbb{P}^6$ to $(\lambda_1x_1,\ldots , \lambda_7 x_7)$. The subgroup $ \{ (\lambda,\ldots, \lambda) | \lambda \in \mathbb{C}^* \}$ acts as the identity on $\mathbb{P}^6$, so we have an action of the quotient torus $T_6 := (\mathbb{C}^*)^{7}/\mathbb{C}^*$. In order to compute the subgroup $H \subset T_n$ of the quotient torus which acts on this chosen subfamily, consider the diagonal scalar matrix

$$\lambda = \begin{bmatrix}
\lambda_1 & 0 & 0 & 0 & 0 & 0 & 0\\
0 & \lambda_2 & 0 & 0 & 0 & 0 & 0\\
0 & 0 & \lambda_3 & 0 & 0 & 0 & 0\\
0 & 0 & 0 & \lambda_4 & 0 & 0 & 0\\
0 & 0 & 0 & 0 & \lambda_5 & 0 & 0\\
0 & 0 & 0 & 0 & 0 & \lambda_6 & 0\\
0 & 0 & 0 & 0 & 0 & 0 & \lambda_7\\
\end{bmatrix}
$$
which acts on $(x_1,\ldots, x_7)$ by

$$\lambda \cdot (x_1,\ldots, x_7) = (\lambda_1 \cdot x_1, \ldots ,\lambda_7\cdot x_7)\,\,\, .$$
The subgroup acting on $X_s$ is generated by the $\lambda$ with the property that $\lambda \cdot p_i = c_i p_i$ for $i = 1,\ldots 7$, and $c_i$ a constant. From $\lambda p_1 = c_1\cdot p_1$, we obtain the equations

\begin{equation*}\lambda_1\lambda_3\lambda_5 = \lambda_5^2\lambda_6 = \lambda_1^2\lambda_7 = \lambda_2\lambda_3\lambda_4 = \lambda_3\lambda_6\lambda_7 \,\, .\end{equation*}
For convenience, we set $\lambda_1 = 1$, and we get the equations

\begin{equation}\lambda_3\lambda_5 = \lambda_5^2\lambda_6 = \lambda_7 = \lambda_2\lambda_3\lambda_4 = \lambda_3\lambda_6\lambda_7\,\,\,.\label{lambdaequation}\end{equation}
\noindent
Hence we have the following expression for $\lambda_5$, $\lambda_6$ and $\lambda_7$.

\begin{equation}\lambda_5 = \lambda_2\lambda_4\label{lambda5}\end{equation}
\begin{equation}\lambda_6 = \displaystyle\frac{1}{\lambda_3}\label{lambda6}\end{equation}
\begin{equation}\lambda_7 = \lambda_2\lambda_3\lambda_4\label{lambda7}\end{equation}
\noindent
From $\lambda p_2 = c_2\cdot p_2$, we obtain the equations

\begin{equation} \lambda_3\lambda_6 = \lambda_2\lambda_7 = \lambda_3^2\lambda_4 = \lambda_5\lambda_6^2 = \lambda_4\lambda_5\label{lambdastor}\end{equation}
\noindent
Inserting (\ref{lambda5}), (\ref{lambda6}) and (\ref{lambda7}) into (\ref{lambdastor}) gives

\begin{equation} 1 = \lambda_2^2\lambda_3\lambda_4 = \lambda_3^2\lambda_4 = \frac{\lambda_2\lambda_4}{\lambda_3^2} = \lambda_2\lambda_4^2\end{equation}
\noindent
hence $\lambda_3 = \lambda_2^2$ and $\lambda_4 = \lambda_2^{-4}$. From (\ref{lambda5}), (\ref{lambda6}) and (\ref{lambda7}) we now obtain

\begin{equation*}\lambda_5 = \lambda_2^{-3}\end{equation*}
\begin{equation*}\lambda_6 = \lambda_2^{-2}\end{equation*}
\begin{equation*}\lambda_7 = \lambda_2^{-1}\end{equation*}
\noindent
Inserting the expressions for $\lambda_3,\ldots,\lambda_7$ into the equation \ref{lambdaequation}, we find that $\lambda_2^7 = 1$.

 We conclude that the subgroup $H$ acting on $X_s$ is $H = \mathbb{Z}/7\mathbb{Z}$, which acts as $x_i \mapsto \xi^{i-1} x_i$, where $\xi$ is a primitive 7th root of 1. This subfamily with the action of $H$ is used in R\o dland's thesis~\cite{roedland} in order to construct a mirror of the general fiber of the full versal family. This is done by orbifolding. The variety $X_s$ has 56 nodes. These are the only singularities. A small resolution of $Y:= X_s/H$ is constructed, and this is the mirror manifold of the general fiber.

\section{The B\"ohm Mirror Construction}\label{boehmmirrorsection}

Consider the versal family we studied in Section~\ref{ex4section}, where the special fiber $X_0$ is the Stanley-Reisner scheme of the simplicial complex labeled $P_4^7$ in Table~\ref{table:pol}. Proposition~\ref{proposition:ex4cohom} states that the Hodge numbers are $h^{1,1}(X) = 1$ and $h^{1,2}(X) = 61$ for the smooth general fiber $X$, hence we have $\chi(X) = 2 (h^{1,1}(X) - h^{1,2}(X)) = 2(1-61) = -120$. In Chapter~\ref{chapter:eulerCharBoehm} we verify that the Euler Characteristic of the B\"ohm mirror candidate is $120$ as expected.

The automorphism group of the complex is isomorphic to $D_4$. On the versal family of deformations, there are 20 orbits under the action of this group. The orbits are listed in table~\ref{table:boehmTabell}, and the number of parameters in each orbit is listed. The invariant family is obtained by equating the parameters contained in the same orbit.

\begin{table}
\begin{center}
\begin{tabular}{|c|c|c|c|}
\hline a & b & Link & $\#$ in $D_7$-orbit\\
\hline
$\{ 1 \}$ & $\{3,4\}$ & cyclic polytope & 4\\
\hline
$\{ 5 \}$ & $\{1,2\}$ & suspension of triangle & 2 \\
\hline
$\{ 5 \}$ & $\{3,4\}$ & " & 2 \\
\hline
$\{ 5 \}$ & $\{3,4,6\}$ & " & 2 \\
\hline
$\{ 5 \}$ & $\{3,6\}$ &    " & 4 \\
\hline
$\{ 6 \}$ & $\{1,2\}$ & octahedron & 2 \\
\hline
$\{ 6 \}$ & $\{5,7\}$ & " & 1 \\
\hline
$\{ 1,2 \}$ & $\{3,4,7\}$ & triangle & 4 \\
\hline
$\{ 1,5 \}$ & $\{3,4,6\}$ & " &  4\\
\hline
$\{ 1,5 \}$ & $\{3,4,6\}$ & " & 4 \\
\hline
$\{ 1,2 \}$ & $\{3,4\}$ & " & 2 \\
\hline
$\{  1,2\}$ & $\{3,7\}$ & " &  4\\
\hline
$\{ 1,5 \}$ & $\{3,4\}$ & " & 4 \\
\hline
$\{ 1,5 \}$ & $\{3,6\}$ & " &  8\\
\hline
$\{ 1,6 \}$ & $\{ 3,4 \} $ & quadrangle & 4\\
\hline
$\{ 1,6 \}$ & $\{ 5,7  \} $ & " & 4\\
\hline
$\{ 1,7  \}$ & $\{  3,4\} $ & " & 4\\
\hline
$\{ 1,7 \}$ & $\{ 2,6\} $ & " & 4\\
\hline
$\{ 5,6 \}$ & $\{ 1,2 \} $ & " & 2\\
\hline
$\{ 5,6 \}$ & $\{ 3,4\} $ & " & 2\\
\hline
\end{tabular}
\end{center}
\caption{$T^1$ is 67 dimensional for the Stanley-Reisner scheme of $P^7_4$}
\label{table:boehmTabell}
\end{table}

Consider the three parameter family where $s_4$ is the invariant parameter corresponding to the orbit represented by $a = \{ 5\}$ and $b = \{ 3,4,6 \}$ ($b$ is the triangle and the link of ${\bf a}$ is the suspension of this triangle), $s_7$ is the invariant parameter corresponding to $a = \{ 6 \}$ and $b = \{ 5,7 \}$. This orbit consists of this single element, the link of ${\bf a}$ is the octahedron (suspension of a quadrangle), and $b$ consists of two adjacent points of the quadrangle, and $s_8$ is the parameter corresponding to the orbit represented by $a = \{ 1,2 \}$ and $b = \{ 3,4,7 \}$. Here the link of ${\bf a}$ is the triangle $b$. From the expressions on page~\pageref{ex4refUttrykk} in the Appendix, we have $s_4 := t_5 = t_{10}$, $s_7 := t_{17}$ and $s_8 := t_{18} = t_{19} = t_{24} = t_{25}$. We set the other $t_i$ to zero. Now the general fiber in this three parameter family is defined by the $4\times 4$ pfaffians of the matrix

\begin{equation}
\begin{bmatrix}
0                & s_4 x_7^2        & x_1 x_2          & -x_3x_4        & -s_4 x_5^2\\
-s_4x_7^2  & 0                   & s_8 (x_3 + x_4) & x_5              & -x_6\\
-x_1x_2      & -s_8(x_3 + x_4) & 0                  & s_7x_6          & x_7 \\
x_3x_4       & -x_5              & -s_7x_6                & 0             & s_8(x_1 + x_2)\\
s_4x_5^2   & x_6           & -x_7                    & -s_8(x_1 + x_2) & 0\\
\end{bmatrix}\label{boehmMatriseMin}\,\,.
\end{equation}
If we construct a one-parameter family with parameter $s:= s_4 = s_7 = s_8$, the matrix is

\begin{equation}
\begin{bmatrix}
0        & sx_7^2        & x_1 x_2      & -x_3x_4       & -s x_5^2\\
-sx_7^2  & 0             & s(x_3 + x_4) & x_5           & -x_6\\
-x_1x_2  & -s(x_3 + x_4) & 0            & sx_6          & x_7 \\
x_3x_4   & -x_5          & -sx_6        & 0             & s(x_1 + x_2)\\
sx_5^2   & x_6           & -x_7         & -s(x_1 + x_2) & 0\\
\end{bmatrix}\label{boehmMatrise}\,\, .
\end{equation}
Let $X_s$ be the variety generated by the principal pfaffians of this matrix. It is defined by the ideal generated by the polynomials

\begin{equation*}p_1 = x_5 x_7 + sx_6^2 - s^2(x_1 + x_2)(x_3 + x_4)\end{equation*}
\begin{equation*}p_2 = x_3 x_4 x_7 + s(x_1 + x_2)x_1x_2 - s^2x_5^2x_6\end{equation*}
\begin{equation*}p_3 = x_3 x_4 x_6 + sx_5^3 - s^2(x_1 + x_2)x_7^2\end{equation*}
\begin{equation*}p_4 = x_1 x_2 x_6 + sx_7^3 - s^2(x_3 + x_4)x_5^2\end{equation*}
\begin{equation*}p_5 = x_1 x_2 x_5 + sx_3x_4(x_3 + x_4) - s^2x_6x_7^2\end{equation*}
\noindent
By a Macaulay 2~\cite{M2} computation, the singular locus of this variety is 0-dimensional, and the degree of the singular locus is 48. This fits nicely with the computation we will perform in chapter~\ref{chapter:eulerCharBoehm}, where we find that there are 4 isolated singularities of type $Q_{12}$.

Other choices of 3 parameters give different results. In most cases, the general fiber has singular locus of dimension greater than zero, but there are several ways to construct families where the general fiber has 0-dimensional singular locus. One is obtained if the nonzero parameters (which we equate) are $s_1$, $s_4$ and $s_{8}$ or $s_1$, $s_4$ and $s_{10}$, where $s_4$ and $s_8$ are as above, and $s_1$ is the invariant parameter corresponding to the link being the cyclic polytope and $s_{10}$ is corresponding to the link being a triangle and $a = \{ 1,2 \}$ and $b = \{3,7\}$. In this case the degree of the singular locus is 79 dimensional. It is expected that a similar computation as that in Chapter~\ref{chapter:eulerCharBoehm} would give the same result also in these cases. In this case, the general fiber in the three parameter family is defined by the $4\times 4$ pfaffians of the matrix

\begin{equation*}
\begin{bmatrix}
0         & s_4x_7^2 & f_{12} & -f_{34}  & -s_4 x_5^2\\
-s_4x_7^2 & 0        & l_2    & x_5      & -x_6\\
-f_{12}   & -l_2     & 0      & 0        & x_7 \\
f_{34}    & -x_5     & 0      & 0        & l_1 \\
s_4x_5^2  & x_6      & -x_7   & -l_1     & 0\\
\end{bmatrix}\label{boehmMatriseMin79}\,\, ,
\end{equation*}
\noindent
where $f_{12} = x_1 x_2 +s_1(x_3^2 + x_4^2)$, $f_{34} = x_3 x_4 +s_1(x_1^2 + x_2^2)$, and $l_1 = s_{8} (x_1 + x_2) + s_{10} (x_3 + x_4)$ and $l_2 = s_8 (x_3 + x_4) + s_{10} (x_1 + x_2)$.

If we include all these four parameters, $s_1$, $s_4$, $s_8$ and $s_{10}$, and equate the first three, say $s := s_1 = s_4 = s_8$ and set $t := s_{10}$, we still get dimension 0 and degree 79. If we equate all these four parameters, we no longer have isolated singularities, since $l_1 = l_2$ in the matrix above in this case.

As in the previous section, there is also a subgroup $H\subset T_7$ of the quotient torus acting on $X_s$. Consider the diagonal scalar matrix

$$\lambda = \begin{bmatrix}
\lambda_1 & 0 & 0 & 0 & 0 & 0 & 0\\
0 & \lambda_2 & 0 & 0 & 0 & 0 & 0\\
0 & 0 & \lambda_3 & 0 & 0 & 0 & 0\\
0 & 0 & 0 & \lambda_4 & 0 & 0 & 0\\
0 & 0 & 0 & 0 & \lambda_5 & 0 & 0\\
0 & 0 & 0 & 0 & 0 & \lambda_6 & 0\\
0 & 0 & 0 & 0 & 0 & 0 & \lambda_7\\
\end{bmatrix}
$$
which acts on $(x_1,\ldots, x_7)$ by

$$\lambda \cdot (x_1,\ldots, x_7) = (\lambda_1 \cdot x_1, \ldots ,\lambda_7\cdot x_7)\,\,\,.$$
\noindent
The subgroup acting on $X_s$ is generated by the $\lambda$ with the property that $\lambda \cdot p_i = c_i p_i$ for $i = 1,\ldots 5$, and $c_i$ a constant. From $\lambda p_1 = c_1\cdot p_1$, we obtain the equations

\begin{equation*}\lambda_5\lambda_7 = \lambda_6^2 = \lambda_1\lambda_3 = \lambda_1\lambda_4 = \lambda_2\lambda_3 = \lambda_2\lambda_4 \,\, .\end{equation*}
Hence $\lambda_1 = \lambda_2$, $\lambda_3 = \lambda_4$. For convenience, we set $\lambda_7 = 1$, and we get the equation

\begin{equation} \lambda_5 = \lambda_6^2 = \lambda_1\lambda_3\label{lambdaBoehm1}\end{equation}
\noindent
From $\lambda p_2 = c_2\cdot p_2$, we obtain the equations

\begin{equation*}\lambda_3\lambda_4\lambda_7 = \lambda_1^2\lambda_2 = \lambda_1\lambda_2^2 = \lambda_5^2\lambda_6\end{equation*}
\noindent
Inserting $x_7 = 1$, $\lambda_2 = \lambda_1$ and $\lambda_4 = \lambda_3$, we get

\begin{equation}\lambda_3^2 = \lambda_1^3 = \lambda_5^2\lambda_6\label{lambdaBoehm2}\end{equation}
\noindent
Combining equation~\ref{lambdaBoehm1} and \ref{lambdaBoehm2} we get

\begin{equation}\lambda_3^2 = \lambda_1^3= \lambda_6^5\label{lambdaBoehm3}\end{equation}
\noindent
From $\lambda p_3 = c_3\cdot p_3$, we obtain the equations

\begin{equation*}\lambda_3\lambda_4\lambda_6 = \lambda_5^3 = \lambda_1\lambda_7^2 = \lambda_2\lambda_7^2\end{equation*}
\noindent
Inserting $\lambda_7 = 1$, $\lambda_2 = \lambda_1$, $\lambda_4 = \lambda_3$ and $\lambda_5 = \lambda_6^2$ we get

\begin{equation}\lambda_3^2 \lambda_6= \lambda_6^6 = \lambda_1 \label{lambdaBoehm4}\end{equation}
\noindent
Combining equation \ref{lambdaBoehm3} and \ref{lambdaBoehm4} we get

\begin{equation*}\lambda_6^{13} = 1\end{equation*}
and
\begin{equation*}\lambda_3^4 = \lambda_6^{10}\,\,\, .\label{lambdaBoehm5}\end{equation*}
\noindent
We conclude that the subgroup $H$ is isomorphic to $\mathbb{Z}/13\mathbb{Z}$ with generator

$$(x_1:x_2: x_3: x_4: x_5: x_6: x_7) \mapsto (\xi^3x_1,\xi^3x_2, \xi^{11}x_3, \xi^{11}x_4, \xi x_5, \xi^7x_6 ,x_7)$$
where $\xi$ is a primitive 13th root of 1.

The mirror is constructed in B\"ohm's thesis~\cite{boehm}, using tropical geometry. It can also be constructed by orbifolding, by a crepant resolution of $X_s/H$. In the next chapter we will verify that the euler characteristic of this mirror candidate is actually $120$, as it should be.

\chapter{The Euler Characteristic of the B\"ohm Mirror}\chaptermark{The Euler Char. of the B\"ohm Mirror}\label{chapter:eulerCharBoehm}

Let $X$ be the smooth general fiber of the versal family we studied in \ref{ex4section}. Proposition~\ref{proposition:ex4cohom} states that the Hodge numbers are $h^{1,1}(X) = 1$ and $h^{1,2}(X) = 61$, hence we have

$$\chi(X) = 2 \cdot(h^{1,1}(X) - h^{1,2}(X)) = 2\cdot(1-61) = -120\,\, .$$
\noindent
In this chapter we verify that the Euler Characteristic of the B\"ohm mirror candidate actually is $120$ as it should be.

Recall from Section \ref{boehmmirrorsection} that $X_s$ is the (singular) general fiber of the given one parameter subfamily of the full versal family, and that $H$ is the group $\mathbb{Z}/13\mathbb{Z}$ which acts on $X_s$. Let $Y_s$ be the quotient space $Y_s := X_s/H$.

In this chapter, we construct a crepant resolution $f: M_s \rightarrow Y_s$ and prove the following result, using toric geometry.

\newtheorem{eulerBoehm}{Theorem}[section]
\begin{eulerBoehm}The Euler characteristic of $M_s$ is 120. \label{theorem:eBoehm} \end{eulerBoehm}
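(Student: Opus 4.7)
The plan is to compute $\chi(M_s)$ by decomposing it into the preimage of the smooth locus of $Y_s$ and the exceptional divisors lying over the singular points. Since Euler characteristic is additive on constructible stratifications, I would write
\[
\chi(M_s) \;=\; \chi\bigl(Y_s^{\mathrm{sm}}\bigr) \;+\; \sum_{p\in\mathrm{Sing}(Y_s)} \chi\bigl(f^{-1}(p)\bigr),
\]
so the first task is to identify $\mathrm{Sing}(Y_s)$ and then compute the Euler characteristic of each exceptional fiber. The introduction already records the answer to the first task: the singular locus of $X_s$ consists of $4$ isolated $Q_{12}$ points (this was verified in Section~\ref{boehmmirrorsection} using the Macaulay\,2 computation that gave a $0$-dimensional singular locus of degree~$48$), and under the $H=\mathbb{Z}/13\mathbb{Z}$ action these $4$ points together with two further points of $X_s$ fixed by the $D_4$-symmetry become the $6$ quotient singularities of $Y_s$ (the $4$ images of $Q_{12}$ singularities plus two additional cyclic quotient singularities).

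Next, I would compute $\chi(Y_s^{\mathrm{sm}})$ from $\chi(X_s)$ via the standard formula for quotients by a finite group acting with isolated fixed points: if $H$ acts on $X_s$ with fixed-point set $F\subset X_s$, then
\[
\chi\bigl(X_s^{\mathrm{sm}}/H\bigr) \;=\; \frac{\chi(X_s)-|F|}{|H|} \;+\; |F/H|,
\]
so $\chi(Y_s^{\mathrm{sm}})$ is determined once $\chi(X_s)$ is known. To compute $\chi(X_s)$ I would argue by deformation invariance: $X_s$ is a specialization of the smooth general fiber $X$ acquiring only isolated hypersurface singularities, so $\chi(X_s)=\chi(X)+\sum_p (1-\mu(p))$ where $\mu$ is the Milnor number, and Proposition~\ref{proposition:ex4cohom} gives $\chi(X)=-120$; the contribution from each $Q_{12}$ point is $-\mu(Q_{12})=-12$, so this yields $\chi(X_s)$ explicitly.

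For the four $Q_{12}$ quotient singularities I would follow exactly the toric procedure sketched in the introduction. Each singularity is embedded in $\mathbb{C}^4/H$, represented by a cone $\sigma\subset N\cong\mathbb{Z}^4$; using the convex package I would build the regular subdivision $\Sigma$ with $18$ rays and $53$ maximal cones, giving a toric resolution $X_\Sigma\to\mathbb{C}^4/H$ with $14$ exceptional divisors $D_\rho$. The strict transform $\widetilde V$ meets exactly $10$ of these divisors, producing the $12$ components $E_1,\dots,E_{12}$ of the exceptional divisor $E$ of $\widetilde V\to V$. Because $E$ is a simple normal crossings divisor, its Euler characteristic is computed by inclusion–exclusion
\[
\chi(E) \;=\; \sum_{i} \chi(E_i) \;-\; \sum_{i<j}\chi(E_i\cap E_j) \;+\; \sum_{i<j<k}\chi(E_i\cap E_j\cap E_k),
\]
where each $E_i$ is identified using one of the techniques outlined in the introduction (torus intersection with $\widetilde V\cap T_\rho$, locally trivial $\mathbb{P}^1$-bundle over a smooth toric surface, orbit closure of a $2$-dimensional cone in $\Sigma$, or a polytope with $\mathrm{Star}(\rho)$ as normal fan). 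For the two remaining quotient singularities I would invoke Batyrev's McKay correspondence \cite{bat}, which gives $\chi(f^{-1}(p))$ directly in terms of the conjugacy classes of the local stabilizer subgroup of $H$.

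The hard part is unquestionably the combinatorial/toric analysis at a single $Q_{12}$ point: identifying the $12$ components $E_i$ and their triple intersections requires working inside a $4$-dimensional fan with $18$ rays and $53$ maximal cones, and in particular the single case in which $\widetilde V\cap D_\rho$ is reducible (with $4$ components) will need to be treated by hand, likely via the auxiliary polytope whose normal fan is $\mathrm{Star}(\rho)$. Once $\chi(E)$ is in hand for one such point, $D_4$-symmetry (which acts transitively on the four $Q_{12}$ points) multiplies it by $4$, and adding the two McKay contributions together with $\chi(Y_s^{\mathrm{sm}})$ should yield the desired total~$120$.
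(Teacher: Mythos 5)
Your overall architecture is exactly the paper's: stratify $M_s$ into the locus where $f$ is an isomorphism plus the six exceptional fibers, compute $\chi(X_s)$ by degenerating the smooth fiber with $\chi(X)=-120$, divide out the free $\mathbb{Z}/13\mathbb{Z}$ action, handle the four $Q_{12}$ points by the toric resolution with inclusion--exclusion on the normal crossing divisor $E$, and handle the two smooth fixed points by Batyrev's McKay correspondence (each contributing $13$, the number of conjugacy classes of $\mathbb{Z}/13\mathbb{Z}$). However, two of your three bookkeeping formulas contain errors that would derail the numerics. First, your degeneration formula has the wrong sign for a threefold: the Milnor fiber of an isolated hypersurface singularity in dimension $3$ is a wedge of $\mu$ copies of $S^3$, so $\chi(F_p)=1-\mu(p)$, and since $\chi(X)=\chi(X_s)-\#\{p\}+\sum_p\chi(F_p)$, one gets $\chi(X_s)=\chi(X)+\sum_p\mu(p)=-120+4\cdot 12=-72$, which is the paper's value. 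Your stated rule $\chi(X_s)=\chi(X)+\sum_p(1-\mu(p))$ (or your alternative reading, contribution $-\mu$ per point) is the even-dimensional convention; it gives $-164$ or $-168$, and neither makes $(\chi(X_s)-6)/13$ an integer --- a built-in sanity check your numbers would fail.

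Second, your quotient formula double-counts the singular points. Since your master stratification already assigns $\chi(f^{-1}(p))$ to each of the six singular points of $Y_s$, the open piece must be $\chi(Y_s^{\mathrm{sm}})=\chi\bigl((X_s\setminus F)/H\bigr)=\bigl(\chi(X_s)-|F|\bigr)/|H|=(-72-6)/13=-6$, with \emph{no} $+|F/H|$ term: the formula you wrote computes $\chi(Y_s)$ itself, and feeding it into your stratification would yield $0+4\cdot 25+2\cdot 13=126$ rather than $120$. Finally, be aware that the heart of the paper's argument is the part you only sketch: identifying the twelve components $E_i$ (with $\sum_i\chi(E_i)=61$) and the intersection complex ($25$ double curves, $14$ triple points), giving $\chi(E)=61-25\cdot 2+14=25$. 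Once that value is actually established, the corrected count is $\chi(M_s)=-6+4\cdot 25+2\cdot 13=120$, exactly as in the paper.
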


The variety $X_s$ has four isolated singular points at $(1:0:0:0:0:0:0)$, $(0:1:0:0:0:0:0)$, $(0:0:1:0:0:0:0)$ and $(0:0:0:1:0:0:0)$. The group $H$ acts freely on $X_s$ away from 6 fixed points: The four singular points and the two smooth points $(1:-1:0:0:0:0:0)$ and $(0:0:1:-1:0:0:0)$. Locally at the latter points, the quotient space $Y_s$ is the germ $(\mathbb{C}^3/H, 0)$ where the action is generated by the diagonal matrix with entries $(\xi, \xi, \xi^{-2})$. To see this, notice that if we set $y_i := x_i/x_1$, the entry $x_1x_2 = y_2$ in matrix~\eqref{boehmMatrise} is a unit locally around the point $(-1:0:0:0:0:0)$. Since the matrix~\eqref{boehmMatrise} is also the syzygy matrix of the ideal generating $X_s$, the five pfaffians generating this ideal reduce to three:

 $$y_2y_5 + sy_3y_4(y_3 + y_4) - s^2y_6y_7^2$$
 $$y_2y_6 + sy_7^3 - s^2(y_3 + y_4)y_5^2$$
 $$y_3y_4y_7 + sy_2(1+y_2) - s^2y_5^2y_6\,\,.$$
Set $v = y_2$. Then the second equation gives $y_6 = - \frac{s}{v}y_7^3 + \frac{s^2}{v} \left( y_3 + y_4 \right )y_5^2$. Inserting this in the first equation gives
$$ f := w y_5 + sv y_3y_4(y_3 + y_4) + s^3y_7^5$$
where $w$ is the unit $v^2 - s^4(y_3 + y_4)y_5y_7^2$, so locally at the fixed point\linebreak $(1:-1:0:0:0:0:0)$, the quotient $X_s/H$ is $$\text{Spec} \left( \mathbb{C}[y_3, y_4, y_5, y_7]/(f)^H\right) \cong \text{Spec} \left( \mathbb{C}[y_3, y_4, y_7]^H \right) \cong \mathbb{C}^3/H.$$ The group $H$ acts by $$(y_3,y_4,y_7) \mapsto (\xi y_3, \xi y_4, \xi^{-2}y_7)\,\,\, .$$ The situation is similar in the other fixed point $(0:0:1:-1:0:0:0)$.

Now consider the four singular points. One sees that $D_4$ gives isomorphisms of the germs at the singular points. Let $P$ be one of these singular points, by symmetry we can choose $P = (1:0:0:0:0:0:0)$. To see what $(X_s,P)$ look like locally, we consider an affine neighborhood of $P$, so we can assume $x_1 = 1$ with $P$ the origin in this affine neighborhood. Set $y_i = \frac{x_i}{x_1}$. Now $s(x_1 + x_2) = s(1 + y_2)$ is a unit around the origin, and the five pfaffians again reduce to three:

$$y_5y_7 + sy_6² - s²(1 + y_2)(y_3 + y_4)$$
$$y_3y_4y_7 + sy_2(1 + y_2) - s²y_5²y_6$$
$$y_3y_4y_6 + sy_5³ - s²(1 + y_2)y_7²$$
From the second equation we get $y_2 = u(s²y_5²y_6 - y_3y_4y_7)$ where $u$ is a unit locally around the origin. The first and third equations are now

$$y_5y_7 + sy_6² - v(y_3 + y_4)$$
$$y_3y_4y_6 + sy_5³ - v y_7²$$
where $v$ is the unit $s²(1 + y_2)$. Set $z_1 = y_3 + y_4$, $z_2 = y_3 - y_4$, $z_3 = y_5$, $z_4 = y_6$ and $z_5 = y_7$. Then we have

$$z_3z_5 + sz_4^2 - vz_1$$
$$(z_1² - z_2²)z_4 + 4sz_3³ - 4vz_5²$$
Inserting $z_1 = \frac{1}{v}(z_3z_5 + sz_4^2 )$ in the second equation gives

$$z_3²z_4z_5² + 2sz_3z_4³z_5 + s²z_4^5 - v²z_2²z_4 + 4sv²z_3³ - 4v^3z_5²\,\,\, .$$

\noindent After a coordinate change, this polynomial is

$$g = z_5^2 + z_3^3 + z_2^2z_4 + z_4^5 +  w_1 z_3z_4^3z_5 + w_2 z_3^2z_4z_5^2\,\,\, .$$

\noindent where $w_1$ and $w_2$ are $H$ invariant units (since $y_2 = x_2/x_1$ maps to $y_2$ under the action of $H$). The polynomial $g$ has Milnor number 12, and the corank of the Hessian matrix of $g$ is 3. By Arnold's classification of singularities~\cite{arnold} the type of the singularity is $Q_{12}$. The normal form of this singularity is

$$f = z_5^2 + z_3^3 + z_2^2z_4 + z_4^5 \,\, .$$

\noindent In order to show that $f$ and $g$ represent the same germ, we give an $H$ invariant coordinate change taking $g$ to $f$ locally around the origin.

\noindent We first perform the coordinate change

$$z_5 \mapsto z_5 - \frac{1}{2} w_1z_3z_4^3 \,\, ,$$
which maps $g$ to

$$z_5^2 + z_3^3 + z_2^2z_4 + z_4^5 - \frac{1}{4} w_1^2z_3^2z_4^6 + w_2z_3^2z_4z_5^2 - w_1w_2z_3^3z_4^4z_5 + \frac{1}{4}w_1^2w_2z_3^4z_4^7\,\,\, .$$

\noindent This expression may be written

$$u_1 z_5^2 + u_2z_3^3 + z_2^2z_4 + u_3z_4^5 \,\,\, .$$

\noindent where $u_1$, $u_2$ and $u_3$ are $H$ invariant units locally around the origin. After a coordinate change, we obtain the standard form $f$.

Since $H$ now acts as $(z_2, z_3, z_4, z_5)\mapsto(\xi^8z_2, \xi^{-2}z_3, \xi^{4}z_4, \xi^{-3}z_5)$, the polynomial $f$ is {\it semi-invariant} in the sense that $f(\xi^8z_2, \xi^{-2}z_3, \xi^{4}z_4, \xi^{-3}z_5)= \xi^7 f(z_2, z_3, z_4, z_5)$. In fact, it is also a {\it quasi-homogeneous} function of degree 1 and weight $(\alpha_2, \alpha_3, \alpha_4,\alpha_5) = (\frac{2}{5},\frac{1}{3},\frac{1}{5},\frac{1}{2})$, i.e.

$$f(\lambda^{\alpha_2}z_2, \ldots ,\lambda^{\alpha_5}z_5) = \lambda f(z_2,\ldots,z_5)$$ for any $\lambda \geq 0$. We now use also Arnolds notation and set $f = w^2 + x^3 + y^5 + yz^2$. The singularity of $Y_s$ at $0$ is a so called {\it hyper quotient singularity} (hypersurface singularity divided by a group action). The group $H$ acts on $Z := Z(f) := \displaystyle\{ p \mid f(p) = 0 \displaystyle\} \subset \mathbb{C}^4$, and we have $Z/H \cong  \text{Proj} (\mathcal{O}_{\mathbb{C}^4}/(f))^H$.

The quotient $(Z/H,0)$ is Gorenstein. This follows from the following general observation. Let $H$ be a finite subgroup of $GL_n(\mathbb{C})$ and $(Z,0) \subset (\mathbb{C}^n, 0)$ a codimension $r$ Gorenstein singularity with an induced $H$ action. Let $\mathcal{F}$ be a free $\mathcal{O}_{\mathbb{C}^n}$ resolution of $\mathcal{O}_Z$ which is also an $H$ module; i.e. $F_i \cong \mathcal{O}_{\mathbb{C}^n} \otimes V_i$ as $H$ modules with $V_i$ a representation of $H$. Let $V_{\text{det}}$ be the representation $g\mapsto \text{det}(g)$. If $V_k^*\cong V_{r-k}\otimes V_{\text{det}}$ as representations, then $(Z/H, 0)$ is Gorenstein.

 Let $V$ be the singularity $(Z/H,0)\subset (\mathbb{C}^4/H,0)$. It is defined by the ideal $(f)^H$ in $\mathcal{O}^H_{\mathbb{C}^4}$. We wish to construct a crepant resolution $\widetilde{V}\rightarrow V$ of this singularity.

From now on we use freely the notation and results from the book by Fulton \cite{fulton}. We may find a cone $\sigma^{\vee}$ and a lattice $M$ such that

$$\mathbb{C}[w,x,y,z]^H = \mathbb{C}[\sigma^{\vee}\cap M]$$
A monomial $w^\alpha x^\beta y^\gamma z^\delta$ maps to $\xi^{-3\alpha -2\beta + 4\gamma + 8\delta} w^\alpha x^\beta y^\gamma z^\delta$. This monomial is invariant under the action of $H$ if $-3\alpha - 2\beta + 4\gamma + 8\delta = 0 \,(\text{mod}\,13)$. This equation can be written $\alpha +5 \beta + 3\gamma + 6\delta = 0 \,(\text{mod}\,13)$. Let $M$ be the lattice

$$\{ (\alpha, \beta, \gamma, \delta ) | \alpha +5\beta + 3\gamma + 6\delta = 0 \,(\text{mod}\,13)\}$$
and let $\sigma^{\vee}$ be the first octant in $M_{\mathbb{R}}$. Let $N := \text{Hom}(M,\mathbb{Z})$ be the dual lattice, i.e.

$$N = \mathbb{Z}^4 + \frac{1}{13}(1,5,3,6)\mathbb{Z}\,\,\,\, .$$
The dual cone $\sigma$ is the first octant in $N_{\mathbb{R}}$. Let $v_1,\ldots ,v_4$ be the vectors $v_1 = \frac{1}{13}(1,5,3,6)$, $v_2 = (0,1,0,0)$, $v_3 = (0,0,1,0)$ and $v_4 = (0,0,0,1)$. The isomorphism $\oplus\mathbb{Z}v_i \rightarrow N$ given by multiplication by the matrix

$$A:= \begin{bmatrix}
1/13 & 0 & 0 & 0\\
5/13 & 1 & 0 & 0\\
3/13 & 0 & 1 & 0\\
6/13 & 0 & 0 & 1\\
\end{bmatrix}$$
takes the cone generated by $(13,-5,-3,-6)$, $(0,1,0,0)$, $(0,0,1,0)$ and $(0,0,0,1)$ to the cone $\sigma$. The dual isomorphism  $ M\rightarrow   \bigoplus \mathbb{Z}w_i $  is given by multiplication by the transpose $A^T$.

We find a toric resolution $X_{\Sigma} \rightarrow \mathbb{C}^4/H$ with

$$ \XY
  \xymatrix@1{\widetilde{V}\,\ar@{^{(}->}[r] \ar[d] & X_{\Sigma}\ar[d] \\
   V\, \ar@{^{(}->}[r] & \mathbb{C}^4/H }$$
where $\widetilde{V}$ is the strict transform of $V$. A toric resolution of $\mathbb{C}^4/H$ corresponds to a regular subdivision of $\sigma$. This may be computed using the Maple package convex~\cite{convex}. The command {\it regularsubdiv} in convex does not give a resolution with a smooth strict transform, so an additional manual subdivision is made. Table~\ref{table:rays} lists the rays in such a regular subdivision, in the basis $v_1,\ldots,v_4$. On page~\pageref{table:cones} in the Appendix, all the maximal cones of this subdivision $\Sigma$ of $\sigma$ are listed, and they are labeled $\tau_{1},\ldots,\tau_{53}$. Each cone is represented by the four rays spanning it.

\begin{table}
\begin{center}
\begin{tabular}{l}
$[0, 0, 0, 1]$, $[0, 0, 1, 0]$, $[0, 1, 0, 0]$, $[1, 0, 0, 0]$, $[3, -1, 0, -1]$, $[3, 0, 0, -1]$,\\
$[5, -1, -1, -2]$, $[5, -1, 0, -2]$, $[6, -2, -1, -2]$, $[7, -2, -1, -3]$, $[8, -3, -1, -3]$,\\
$[9, -3, -2, -4]$, $[11, -4, -2, -5]$, $[11, -4, -2, -4]$, $[12, -4, -2, -5]$,\\
$[13, -5, -3, -6]$, $[14, -5, -3, -6]$, $[15, -5, -3, -6]$\label{table:rays}\\
\end{tabular}\caption{The rays $\rho$ of a regular subdivision $\Sigma$ of the cone $\sigma$.}
\end{center}
\end{table}

The polynomial $f$ is only semi-invariant, and the ideal $(f)^H$ has many generators in $\mathbb{C}[\sigma^{\vee} \cap M]$. Still, $\widetilde{V}$ is irreducible and codimension 1 in $X_{\Sigma}$ and therefore defined by an irreducible polynomial $\widetilde{f}_{\tau}$ in each $\mathbb{C}[\tau^{\vee}\cap M] = \mathbb{C}[y_1, y_2, y_3, y_4]$ when $\tau \in\Sigma$. The $y_i$ correspond to the four rays of $\tau$, in the order in which they are listed on page \pageref{table:cones}. To compute $\widetilde{f}_{\tau}$, take the image of any generator of $(f)^H$ by the inclusion $\mathbb{C}[\sigma^{\vee}\cap  M] \subset \mathbb{C}[\tau^{\vee}\cap M]$ and remove all factors which are powers of some $y_i$. We can choose the generator $y^8 f \in (f)^H$. The weights of the monomials of $y^8 f $ are $[2,0,8,0]$,$[0,3,8,0]$,$[0,0,13,0]$ and $[0,0,9,2]$.

We will compute $\widetilde{f}_{\tau}$ for a specific $\tau$ to illustrate the idea. Let $\tau := \tau_1$ be the cone in $\Sigma$ generated by the vectors $[13,-5,-3,-6]$, $[0,0,1,0]$, $[3,-1,0,-1]$ and $[8,-3,-1,-3]$ in $\oplus\mathbb{Z}v_i$. Let $B$ be the matrix

$$B:=
\begin{bmatrix}
13 & 0 & 3 & 8\\
-5 & 0 & -1 & -3\\
-3 & 1 & 0 & -1\\
-6 & 0 & -1 & -3\\
\end{bmatrix}\,\,\, .$$
The rays of $\tau^{\vee}$ are generated by the columns of the matrix $(B^{-1})^T$. Thus in $M$, the rays of $\tau^{\vee}$ are generated by the columns of $(A^T)^{-1}\cdot (B^{-1})^T = (B^TA^T)^{-1}$. The image of $y^8f$ by the inclusion is a factor $\widetilde{f}_{\tau}$ of the polynomial ${\bf y}^{B^{T}A^T\cdot [2,0,8,0]} + {\bf y}^{B^{T}A^T\cdot [0,3,8,0]} + {\bf y}^{B^{T}A^T\cdot [0,0,13,0]} + {\bf y}^{B^{T}A^T\cdot [0,0,9,2]}$, where the multi index notation ${\bf y}^{[i_1,\ldots,i_4]}$ means $y_1^{i_1}\cdots y_4^{i_4}$. In this case $\widetilde{f}_{\tau} = y_4y_1^2 + 1 + y_4^4y_3^3y_2^5 + y_4^2y_3y_2$. In this way one checks that all $\tilde{f}$ in fact define smooth hypersurfaces in each chart, i.e. that $\tilde{V}$ is smooth.

Each ray $\rho$ in $\Sigma$, aside from the 4 generating the cone $\sigma$, determines an exceptional divisor $D_{\rho}$ in $X_{\Sigma}$. Hence there are 14 exceptional divisors in $X_{\Sigma}$. For every ray $\rho$, the exceptional divisor $D_{\rho}$ is a smooth, complete toric 3-fold and comes with a fan $\text{Star}(\rho)$ in a lattice $N(\rho)$; we define $N_{\rho}$ to be the sublattice of $N$ generated (as a group) by $\rho \cap N$ and

$$N(\rho) = N/N_{\rho},\,\,\,M(\rho) = M\cap \rho^{\perp}$$
The torus $T_{\rho}\subset D_{\rho}$ corresponding to these lattices is

$$T_{\rho} = \text{Hom}(M(\rho),\mathbb{C}^*) = \text{Spec}(\mathbb{C}[M(\rho)]) = N(\rho)\otimes_{\mathbb{Z}}\mathbb{C}^*\,\, .$$
 The subvariety $\widetilde{V}$ will only intersect 10 of these exceptional divisors $D_{\rho}$. To check this, we compute the fan consisting of all the cones of $\Sigma$ containing the ray $\rho$, realized as a fan in the quotient lattice $N(\rho)$. The quotient map $\mathbb{C}[\tau^{\vee}\cap M] \rightarrow \mathbb{C}[\tau^{\vee}\cap M(\rho)]$ sends $y_i$ to 0 if $y_i$ is the coordinate corresponding to the ray $\rho$. The other three coordinates are unchanged. Let $\overline{f}_{\tau}$ be the image of $\tilde{f}_{\tau}$ under this projection map, i.e. $\overline{f}_{\tau}:= \tilde{f}_{\tau} | (y_i = 0)$.

We consider the cone $\tau = \tau_1$ studied above, and the ray $\rho$ generated by $(3,-1,0,-1)$. In this case, the coordinate $y_3$ is zero, and the polynomial $\overline{f}_{\tau}$ is $y_4y_1^2 + 1$. Hence the ray $\rho$ intersects $\widetilde{V}$ in this chart. This computation can be performed for all the 14 rays. If the strict transform is 1 on all charts containing $D_i$, then there is no intersection. The rays generated by $[3,0,0,-1]$,$[5,-1,0,-2]$,$[8,-3,-1,-3]$ and $[11,-4,-2,-4]$ do not intersect $\tilde{V}$, hence the subvariety $\tilde{V}$ will intersect 10 of the exceptional divisors.

In 9 of these 10 cases the intersection is irreducible and in one case the intersection has 4 components, but one of these is the intersection with another exceptional divisor. All in all the exceptional divisor $E$ in $\widetilde{V}$ has 12 components. We list the 12 components of $E$ in Table~\ref{table:exceptional}.

\begin{table}
\begin{center}
\begin{tabular}{cccc}
Label & $\alpha$ & Type & $\chi$\\
\hline
$E_1$ & $(6,-2,-1,-2)    $  & $\mathbb{P}^2$                  & 3\\
$E_2$ & $(3,-1,0,-1)     $  & $\text{Bl}_1\mathbb{F}_2  $ & 5\\
$E_3$ & $(11,-4,-2,-5)  $  & $\mathbb{F}_5$                   &4\\
$E_4$ & $(7,-2,-1,-3)    $  & $\mathbb{F}_2$                   & 4\\
$E_5$ & $(9,-3,-2,-4)    $  & $\text{Bl}_2\mathbb{F}_2$   & 6\\
$E_6$ & $(9,-3,-2,-4)    $  & $\text{Bl}_2\mathbb{F}_2$   & 6\\
$E_7$ & $(15,-5,-3,-6)  $  & $\text{Bl}_3\mathbb{F}_2$   & 7\\
$E_8$ & $(12,-4,-2,-5)  $  & $\text{Bl}_3\mathbb{F}_2$   & 7\\
$E_9$ & $(14,-5,-3,-6)  $  & $\mathbb{F}_2$                   & 4\\
$E_{10}$ & $(1\, ,0\, ,0\, ,0\,)   $   & $\text{Bl}_3\mathbb{P}^2$ & 6\\
$E_{11}$ & $(9,-3,-2,-4)$  & $\text{Bl}_1\mathbb{F}_4 $  & 5\\
$E_{12}$ & $(5,-1,-1,-2)$  & $\mathbb{F}_3$   & 4 \\
\end{tabular}\caption{Components of $\widetilde{V}\cap E$.}\label{table:exceptional}
\end{center}
\end{table}

We may check that the resolution is crepant using the following formula for the discrepancies of hyperquotient singularities, see the article by Reid~\cite{rei}. Let $\alpha \in N$ be the primitive vector generating $\rho$. Any $m \in M$ determines a rational monomial in the variables $w,x,y,z$ and we write $m \in f$ if the monomial is in $\{ w^2, x^3, yz^2, y^5 \}$. Define $\alpha(f) = \text{min} \{ \alpha(m) \mid m\in f \}$. The result is that components of $\widetilde{V} \cap D_{\rho}$ are crepant if and only if

$$\alpha(1,1,1,1) = \alpha(f) +1\,\,\, .$$
This may easily be checked to be true for all $\rho\in \Sigma$ with $\widetilde{V}\cap D_{\rho} \neq \emptyset$.

To compute the type of $E_i$, several different techniques were needed depending upon the complexity of $D_{\rho}$ and/or $\tilde{f}_{\tau}$, $\rho \subset \tau$. For each $\rho$ we compute the polynomials $\overline{f}_{\tau}$. If for some $\tau$, $\overline{f}_{\tau}$ is on the form $\overline{f}_{\tau} = y_j^{n_j}y_k^{n_k} + 1$ with $(n_j, n_k) \neq (0,0)$, we use Method 1 described below.

\underline{Method 1.} In some cases the intersection $\widetilde{V}\cap T_{\rho}$ is a torus. This torus may be described as $\widetilde{N(\rho)} \otimes \mathbb{C}^{*}$, where $\widetilde{N(\rho)}$ is a rank 2 lattice. The inclusion $\widetilde{V} \cap T_{\rho} \rightarrow T_{\rho}$ may be computed to be induced by a linear map $\phi : \widetilde{N(\rho)} \rightarrow N(\rho)$. Now $\widetilde{V} \cap D_{\rho}$ is the closure of $\widetilde{V}\cap T_{\rho}$ in $D_{\rho}$, so it is the toric variety with fan $\phi^{-1}(\text{Star}(\rho))$.\vskip 4 pt

\noindent\underline{$E_1$.} Consider the case where a primitive vector generating $\rho$ is $(6,-2,-1,-2)$. Let $\tau = \tau_{29}$ be the cone generated by $(13, -5, -3, -6)$, $(0, 0, 0, 1)$, $(6, -2, -1, -2)$ and $(14, -5, -3, -6)$. In this chart, $\widetilde{V}$ is generated by $\tilde{f}_{\tau} = y_1^2y_4 + 1 + y_2y_3^2+ y_3$. Restricted to $y_3 = 0$ (corresponding to the ray $(6,-2,-1,-2)$), this gives $\overline{f}_{\tau} = y_1^2y_4 + 1$. Hence the inclusion $\widetilde{V} \cap T_{N(\rho)} \rightarrow T_{N(\rho)}$ is induced by the inclusion of the sublattice $\widetilde{N(\rho)}\cong \mathbb{Z}^2$ of $N(\rho) \cong \mathbb{Z}^3$ generated by $\pm(1,0,-2)$ and $\pm(0,1,0)$. Hence the map $\phi : \widetilde{N(\rho)}\rightarrow N(\rho)$ is

$$\begin{bmatrix}1 & 0\\
0 & 1\\
-2 & 0\\
\end{bmatrix}$$
and $\widetilde{V}\cap D_{\rho}$ is $\phi^{-1}(\text{Star}(\rho))$. The fan $\text{Star}(\rho)$ consists of 10 maximal cones, and $\phi^{-1}(\text{Star}(\rho))$ is generated by the rays through $(-1,0)$, $(0,1)$ and $(1,-1)$. This fan is drawn in figure~\ref{figure:p2}, and it represents $\mathbb{P}^2$.  In fact, $\phi^{-1}(\text{Star}(\rho))$ can be checked to generate $\mathbb{P}^2$ for all the 10 maximal cones $\tau$ with $\rho$ a ray in $\tau$. In Table~\ref{table:exceptional}, this component of the exceptional divisor $E$ is labeled $E_1$.\vskip 4 pt

\begin{figure}
  \begin{center}
  \includegraphics[width=4cm]{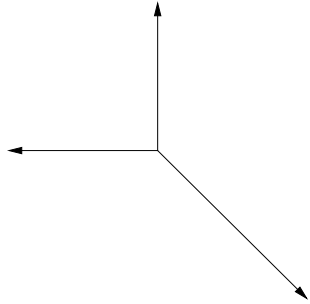}\end{center}
  \caption{A fan representing $\mathbb{P}^2$}\label{figure:p2}
\end{figure}

\noindent\underline{$E_2$.} Now let $\rho$ be generated by the primitive vector $(3, -1, 0, -1)$, and let $\tau = \tau_{48}$. In this chart, $\tilde{V}$ is generated by $\tilde{f}_{\tau} = y_2y_3^2 + 1 + y_1 + y_4^2y_2^2y_1^3$. Restricted to $y_1 = 0$ (corresponding to the ray $(3, -1, 0, -1)$), this gives $\overline{f}_{\tau} = y_2y_3^2 + 1$. By a similar computation as the one above, the fan $\phi^{-1}(\text{Star}(\rho))$ is generated by the rays through the points $(-1,-1)$, $(0,1)$, $(1,1)$, $(1,2)$ and $(2,1)$. This fan is drawn in figure~\ref{figure:bl1f2}. Since $(1,2)$ is the sum of $(0,1)$ and $(1,1)$, the fan represents the blow up of $\mathbb{F}_2$ in a point. On the other hand, $(0,1)$ is the sum of $(-1,-1)$ and $(1,2)$, and the fan represents the blow up of $\mathbb{F}_3$ in a point. These two surfaces are isomorphic.\vskip 4 pt

\begin{figure}
  \begin{center}
  \includegraphics[width=5cm]{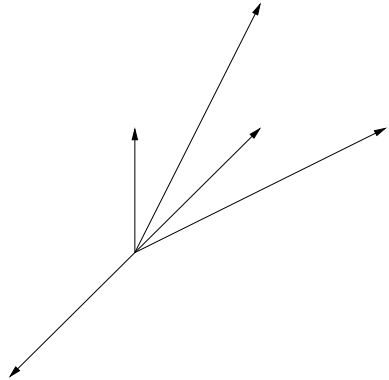}\end{center}
  \caption{A fan representing $\text{Bl}_1\mathbb{F}_2$}\label{figure:bl1f2}
\end{figure}

\noindent\underline{$E_3$.} Let $\tau = \tau_{48}$. In this case $\tilde{f}_{\tau} = y_2y_3^2 + 1 + y_2^2y_4^2y_3^3+ y_1$. Restricted to $y_2 = 0$ (corresponding to the ray $\rho$, this gives $\overline{f}_{\tau} = 1 + y_1$. The fan $\phi^{-1}(\text{Star}(\rho))$ is generated by the rays through the points $(-5, -1)$, $(-1, 0)$, $(0, 1)$ and $(1, 0)$, and is drawn in figure~\ref{figure:f5}. This fan represents $\mathbb{F}_5$.\vskip 4 pt

\begin{figure}
  \begin{center}
  \includegraphics[width=9cm]{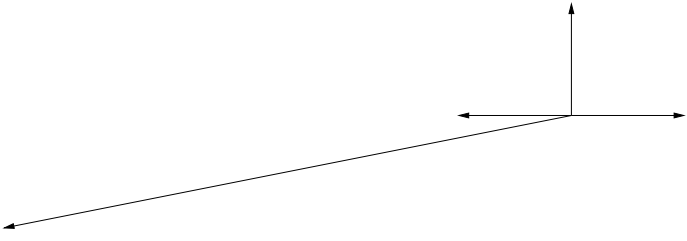}\,\,\,\,\,\,\,\,\,\,\,\,\,\,\,\,\,\,\,\,
  \,\,\,\,\,\,\,\,\,\,\,\,\,\,\,\,\,\,\,\,\,\end{center}
  \caption{A fan representing $\mathbb{F}_5$}\label{figure:f5}
\end{figure}

\noindent\underline{$E_4$.} Let $\tau = \tau_{36}$. In this case $\tilde{f}_{\tau} = y_3^2 + y_2y_1^3y_4^2y_3 + y_2 + 1$. Restricted to $y_3 = 0$ (corresponding to the ray $\rho$, this gives $\overline{f}_{\tau} = 1 + y_2$. The fan $\phi^{-1}(\text{Star}(\rho))$ is generated by the rays through the points $(-1, 2)$, $(0, -1)$, $(0,1)$ and $(1, 0)$, which represents $\mathbb{F}_2$.\vskip 4 pt

\noindent\underline{$E_5$ and $E_6$.} In these two cases it is a component of $\tilde{V}\cap D_{\rho}$ that intersects $T_{N(\rho)}$ in a torus. To see this, consider the case with $\rho$ generated by the primitive vector $(9,-3,-2,-4)$. The fan $\text{Star}(\rho)$ consists of 18 maximal cones. Consider the cone $\tau:= \tau_{13}$. In this chart, $\widetilde{V}$ is generated by $\tilde{f}_{\tau} = y_3 + y_2y_3 + y_1y_2 + y_1^5y_2y_4^2$. Restricted to $y_3 = 0$ (corresponding to the ray $(9,-3,-2,-4)$), this gives the polynomial $\overline{f}_{\tau} = y_1y_2(1 + y_1^4y_4^2)$.  The component $y_1 = 0$ corresponds to the ray $(0,0,1,0)$ and the component $y_2 = 0$ corresponds to $(1,0,0,0)$. These two components are labeled $E_{10}$ and $E_{11}$, and we take a closer look at these in Method 3. Both factors $(1 + iy_1^2y_4)$ and $(1 - iy_1^2y_4)$ of the polynomial $\overline{f}_{\tau}$ give rise to a linear map $\phi : \widetilde{N(\rho)} \rightarrow N(\rho)$ represented by the matrix

$$\begin{bmatrix}
1 & 0\\
0 & 1\\
-2 & 0\\
\end{bmatrix}$$
and $\phi^{-1}(\text{Star}(\rho))$ is generated by the rays through $(-1,-2)$, $(-1,-1)$, $(-1,0)$, $(0,1)$, $(1,2)$ and $(2,3)$. Since $(-1,0) = (-1,-1) + (0,1)$ and $(-1,-1) = (-1,-2) + (0,1)$, the fan represents $\text{Bl}_2\mathbb{F}_2$.\vskip 4 pt

\underline{Method 2.} In 3 cases one sees from the fan $\text{Star}(\rho)$ that $D_{\rho}$ is a locally trivial $\mathbb{P}^1$ bundle over a smooth toric surface.\vskip 4 pt

\noindent\underline{$E_7$.} Consider first the case with $\rho = (15, -5, -3, -6)$. Let $M$ be the matrix

$$M = \begin{bmatrix}
0 & 1 & 9 &  15 \\
0 & 0 & -3 & -5 \\
0 & 0 & -2 & -3 \\
1 & 0 & -4 & -6 \\
\end{bmatrix}$$
and let $\tau$ be the cone generated by the columns of $M$, i.e. $\tau = \tau_{42}$. The fan $\text{Star}(\rho)$ is the image of the 10 maximal cones in $\Sigma$ containing the ray $\rho$ under the projection map $\text{Pr}: N \rightarrow N(\rho)$ given by

$$\text{Pr}:= \begin{bmatrix}
1 & 0 & 0 & 0\\
0 & 1 & 0 & 0\\
0 & 0 & 1 & 0\\
\end{bmatrix} \times M^{-1} =
\begin{bmatrix}
0 & 0 & -2 & 1\\
1 & 3 & 0 & 0\\
0 & 3 & -5 & 0\end{bmatrix}$$
\begin{figure}
  \begin{center}
  \includegraphics[width=4cm]{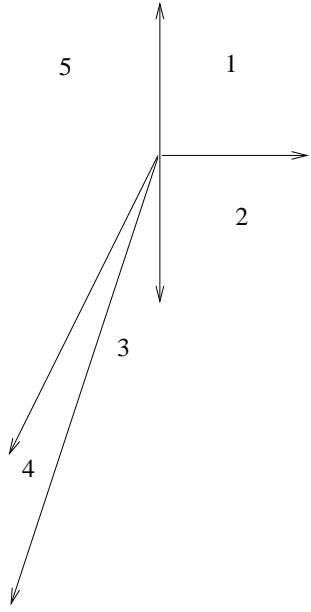}\end{center}
  \caption{A representation of the surface $\text{Bl}_1\mathbb{F}_2$}\label{figure:metode2}
\end{figure}

The fan $\text{Star}(\rho)$ in $N(\rho)$ is generated by the rays $(-1, 0, -3)$, $(-1, 0, -2)$, $(0, -1, 0)$, $(0, 0, -1)$, $(0, 0, 1)$, $(0, 1, 0)$, $(1, 0, 0)$. Let $\Delta^{'}$ be the fan generated by $1$ and $-1$ in the lattice $\mathbb{Z}$, and let  $\Delta^{''}$ be the fan generated by $(-1, -3)$, $(-1, -2)$, $(0, -1)$, $(0, 1)$ and $(1, 0)$ in the lattice $\mathbb{Z}^2$. There is an exact sequence of lattices

$$0 \rightarrow \mathbb{Z} \rightarrow N(\rho) \rightarrow \mathbb{Z}^2 \rightarrow 0$$
where the map $\mathbb{Z} \rightarrow N(\rho)$ is the inclusion $n \mapsto (0,n,0)$ and the map $N(\rho) \rightarrow \mathbb{Z}^2$ is the projection $(x,y,z) \mapsto (x,z)$. This exact sequence gives rise to mappings

$$X(\Delta^{'}) \rightarrow X(\text{Star}(\rho)) \rightarrow X(\Delta^{''})$$
where $X(\Delta^{''})$ is the blow up of $\mathbb{F}_2$ in a point, and $X(\Delta^{'})$ is $\mathbb{P}^1$. Thus we have a trivial $\mathbb{P}^1$ bundle over $\text{Bl}_1\mathbb{F}_2$.

We can find out what $\tilde{V} \cap D_{\rho}$ is by a local look at the fibers of the map $\tilde{V}\cap D_{\rho} \rightarrow \text{Bl}_1\mathbb{F}_2$. Over the charts labeled 2, 3 and 4 in Figure~\ref{figure:metode2}, the map is an isomorphism. Over the intersection of the charts 1 and 5, we have an isomorphism except over two points in $\text{Bl}_1\mathbb{F}_2$, where the inverse image is a line. Hence, $\tilde{V}\cap D_{\rho}$ is isomorphic to $\text{Bl}_3\mathbb{F}_2$.\vskip 4 pt

\noindent\underline{$E_8$.} In this case the fan $\text{Star}(\rho)$ represents a locally trivial $\mathbb{P}^1$ bundle over $\text{Bl}_2\mathbb{F}_2$, and $\tilde{V}\cap D_{\rho}$ is isomorphic to $\text{Bl}_3\mathbb{F}_2$.

To see this, let $\rho = (12, -4, -2, -5)$, and let $M$ be the matrix

$$M = \begin{bmatrix}
9  & 11 & 13 &  12 \\
-3 & -4 & -5 & -4 \\
-2 & -2 & -3 & -2 \\
-4 & -5 & -6 & -5 \\
\end{bmatrix}$$
and let $\tau$ be the cone generated by the columns of $M$, i.e. $\tau = \tau_{47}$. The fan $\text{Star}(\rho)$ is the image of the 12 maximal cones in $\Sigma$ containing the ray $\rho$ under the projection map $\text{Pr}: N \rightarrow N(\rho)$ given by

$$\text{Pr}:= \begin{bmatrix}
1 & 0 & 0 & 0\\
0 & 1 & 0 & 0\\
0 & 0 & 1 & 0\\
\end{bmatrix} \times M^{-1} =
\begin{bmatrix}
0  & 3 & -1 & -2\\
-1 & 1 & 2 & -4\\
0 & -2 & -1 & 2\end{bmatrix}\,\, .$$
The fan $\text{Star}(\rho)$ in $N(\rho)$ is generated by the rays $(-1, 0, 0)$, $(-1, 2, -1)$, $(0, -2, 1)$, $(0, -1, 0)$, $(0, -1, 1)$, $(0, 0, 1)$, $(0, 1, 0)$ and $(1,0,0)$. Let $\Delta^{'}$ be the fan generated by $1$ and $-1$ in the lattice $\mathbb{Z}$, and let  $\Delta^{''}$ be the fan generated by $(2, -1)$, $(-2, 1)$, $(-1, 0)$, $(-1, 1)$, $(0,1)$ and $(1, 0)$ in the lattice $\mathbb{Z}^2$. There is an exact sequence of lattices

$$0 \rightarrow \mathbb{Z} \rightarrow N(\rho) \rightarrow \mathbb{Z}^2 \rightarrow 0$$
where the map $\mathbb{Z} \rightarrow N(\rho)$ is the inclusion $n \mapsto (n,0,0)$ and the map $N(\rho) \rightarrow \mathbb{Z}^2$ is the projection $(x,y,z) \mapsto (y,z)$. This exact sequence gives rise to mappings

$$X(\Delta^{'}) \rightarrow X(\text{Star}(\rho)) \rightarrow X(\Delta^{''})$$
where $X(\Delta^{''})$ is the blow up of $\mathbb{F}_2$ in two points, and $X(\Delta^{'})$ is $\mathbb{P}^1$. Thus we have a locally trivial $\mathbb{P}^1$ bundle over $\text{Bl}_2\mathbb{F}_2$.

We can find out what $\tilde{V} \cap D_{\rho}$ is by a local look at the fibers of the map $\tilde{V}\cap D_{\rho} \rightarrow \text{Bl}_1\mathbb{F}_2$. By a similar computation as in $E_8$, we find that the inverse image is a line in one point, and otherwise an isomorphism. Hence, $\tilde{V}\cap D_{\rho}$ is isomorphic to $\text{Bl}_3\mathbb{F}_2$.\vskip 4 pt

\noindent\underline{$E_9$.} Let $\rho$ be generated by the primitive vector $(14, -5, -3, -6)$, and let $M$ be the matrix

$$M = \begin{bmatrix}
0 &  9 & 15 &  14 \\
0 & -3 & -5 & -5 \\
0 & -2 & -3 & -3 \\
1 & -4 & -6 & -6 \\
\end{bmatrix}$$
and let $\tau$ be the cone generated by the columns of $M$, i.e. $\tau = \tau_{50}$. The fan $\text{Star}(\rho)$ is the image of the 10 maximal cones in $\Sigma$ containing the ray $\rho$ under the projection map $\text{Pr}: N \rightarrow N(\rho)$ given by

$$\text{Pr}:= \begin{bmatrix}
1 & 0 & 0 & 0\\
0 & 1 & 0 & 0\\
0 & 0 & 1 & 0\\
\end{bmatrix} \times M^{-1} =
\begin{bmatrix}
0 & 0 & -2 & 1\\
0 & 3 & -5 & 0\\
1 & 1 &  3 & 0\end{bmatrix}\,\, .$$

The fan $\text{Star}(\rho)$ in $N(\rho)$ is generated by the rays $(-1, -3, 2)$, $(-1, -2, 2)$, $(0, -1, 1)$, $(0, 0, -1)$, $(0, 0, 1)$, $(0, 1, 0)$, and $(1,0,0)$. Let $\Delta^{'}$ be the fan generated by $1$ and $-1$ in the lattice $\mathbb{Z}$, and let  $\Delta^{''}$ be the fan generated by $(-1, -3)$, $(-1, -2)$, $(0, -1)$, $(0,1)$ and $(1, 0)$ in the lattice $\mathbb{Z}^2$. There is an exact sequence of lattices

$$0 \rightarrow \mathbb{Z} \rightarrow N(\rho) \rightarrow \mathbb{Z}^2 \rightarrow 0$$
where the map $\mathbb{Z} \rightarrow N(\rho)$ is the inclusion $n \mapsto (0,0,n)$ and the map $N(\rho) \rightarrow \mathbb{Z}^2$ is the projection $(x,y,z) \mapsto (x,y)$. This exact sequence gives rise to mappings

$$X(\Delta^{'}) \rightarrow X(\text{Star}(\rho)) \rightarrow X(\Delta^{''})$$
The image of $E_9$ under this last projection is a rational curve on a toric surface, and $E_9$ is a $\mathbb{P}^1$ bundle over this curve, but a local computation as in $E_7$ does not tell us what $\tilde{V} \cap D_{\rho}$ looks like.

We look at the 10 charts of $X_{\Sigma}$ containing $D_{\rho}$. Four of these cover $\tilde{V} \cap D_{\rho}$. A covering is given by the cones $\tau_{22}$, $\tau_{24}$, $\tau_{50}$ and $\tau_{51}$. In these four maps, the polynomial $\overline{f}_{\tau}$ is of the form $1 + x + y^2$, hence $\tilde{V} \cap D_{\rho}$ is a union of four copies of $\mathbb{C}^2$. They glue together to form $\mathbb{F}_2$.

\underline{Method 3.} In two cases $E_i$ is an orbit closure in $X_{\Sigma}$ corresponding to a 2-dimensional cone in $\Sigma$.\vskip 4 pt

\noindent\underline{$E_{10}$.} This component is $D_{\rho_1}\cap D_{\rho_2}$, where $\rho_{1}$ is generated by $(1, 0, 0, 0)$ and $\rho_{2}$ is generated by $(9,-3,-2,-4)$. To see this, consider the chart given by the cone $\tau:= \tau_{42}$. In this chart, $\widetilde{V}$ is generated by $\tilde{f}_{\tau} = y_2y_3 + y_3 + y_1^2y_2 + y_2$. Restricted to $y_2 = 0$ (corresponding to the ray $(1,0,0,0)$), this gives $\overline{f}_{\tau} = y_3$ (corresponding to the ray $(9,-3,-2,-4)$). We now define $N_{\rho_1 ,\rho_2}$ to be the sublattice of $N$ generated (as a group) by $(\rho_1 \cap N)\times (\rho_2 \cap N)$ and

$$N(\rho_1 ,\rho_2) = N/N_{\rho_1,\rho_2},\,\,\,M(\rho_1, \rho_2) = M\cap \rho_1^{\perp}\cap \rho_2^{\perp}$$
Let $M$ be the matrix with columns the vectors generating $\tau$, i.e.

$$\begin{bmatrix}
0 & 15 & 9 & 1\\
0 & -5 & -3 & 0\\
0 & -3 & -2 & 0\\
1 & -6 & -4 & 0\\
\end{bmatrix}\,\, .$$
and let $\text{Pr}: N \rightarrow N(\rho_1 ,\rho_2)$ be the projection map

$$\text{Pr}:= \begin{bmatrix}
1 & 0 & 0 & 0\\
0 & 1 & 0 & 0\\
\end{bmatrix} \times M^{-1} =
\begin{bmatrix}
0 & 0 & -2 & 1\\
0 & -2 & 3 & 0\\ \end{bmatrix}\,\, .$$
The set of cones in $\Sigma$ containing both $\rho_1$ and $\rho_2$ is defined by its set of images in $N(\rho_1 ,\rho_2)$ under $\text{Pr}$. There are 6 maximal cones in $\Sigma$ containing both $(1, 0, 0, 0)$ and $(9, -3, -2, -4)$, and they project down to the fan generated by the rays $(-2, 3)$, $(-1, 1)$, $(-1, 2)$, $(0, -1)$, $(0, 1)$ and $(1, 0)$. This fan represents $\text{Bl}_3 \mathbb{P}_2$.\vskip 4 pt

\noindent\underline{$E_{11}$.} This is the intersection of $D_{\rho}$, $\rho$ generated by $(9,-3,-2,-4)$, and the non-exceptional divisor corresponding to the ray $(0,0,1,0)$. The computation is similar as for $E_{10}$, with

$$\text{Pr}= \begin{bmatrix}
1 & -1 & 0 & 3\\
0 & 4 & 0 & -3\\ \end{bmatrix}$$
and the fan generated by the rays $(-1, 4)$, $(0, -1)$, $(0, 1)$, $(1, -1)$ and $(1, 0)$ represents $\text{Bl}_1(\mathbb{F}_4)$.\vskip 4 pt

\underline{Method 4.} We need the following definitions. Let $P$ be a polytope in $\mathbb{R}^d$. For every nonempty face $F$ of $P$ we define

$$N_F := \{ c \in (\mathbb{R}^d)^*  | F\subset \{ x \in P | cx\geq cy \,\, \forall y\in P \} \} \,\, .$$
We define the {\it normal fan} $\mathcal{N}_P$ as

$$\mathcal{N}_P = \{ N_F | \text{$F$ is a face of $P$} \} \,\, .$$

\noindent\underline{$E_{12}$} is computed by finding a polytope $\Delta$ in $M(\rho)_{\mathbb{R}}$ which has $\text{Star}(\rho)$ as normal fan. Now the ray $\rho$ is generated by the vector  $(5, -1, -1, -2)$. Consider the local chart given by the cone $\tau$, where $\tau$ is spanned by the vectors $(0, 1, 0, 0)$, $(9, -3, -2, -4)$, $(7, -2, -1, -3)$,$(5, -1, -1, -2)$. Let M be the matrix with columns the vectors generating $\tau$, i.e.

$$M = \begin{bmatrix}
0 & 9 &  7 & 5\\
1 & -3 & -2 & -1\\
0 & -2 & -1 & -1\\
0 & -4 & -3 & -2\\
\end{bmatrix}\,\, .$$
\noindent
The projection map $\text{Pr}: N \rightarrow N(\rho)$ is given by

$$\text{Pr}:= \begin{bmatrix}
1 & 0 & 0 & 0\\
0 & 1 & 0 & 0\\
0 & 0 & 1 & 0\\
\end{bmatrix} \times M^{-1} =
\begin{bmatrix}
-1 & 1 & 0 & -3\\
-1 & 0 & -1 & -2\\
0 & 0 & 2 & -1\end{bmatrix}$$
The fan $\text{Star}(\rho)$ in $N(\rho)$ is generated by the rays $(-3, -2, -1)$, $(-1, -1, 0)$, $(0, 0, 1)$, $(0, 1, 0)$ and $(1, 0, 0)$. Up to translation, the polytope with vertices

$$(0, 0, 0), (1, 0, 0), (0, 1, 0), (1, 0, 1), (0, 0, 4), (0, 1, 2)$$
has $\text{Star}(\rho)$ as normal fan, see figure~\ref{figure:polytope}.

\begin{figure}
  \begin{center}
  \includegraphics[width=3cm]{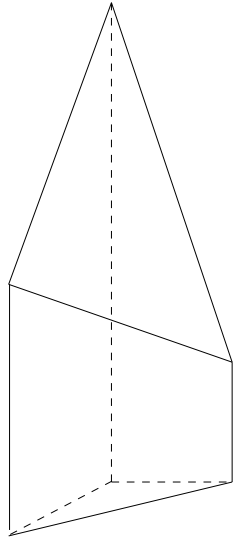}\end{center}
  \caption{The polytope with $\text{Star}(\rho)$ as a normal fan.}\label{figure:polytope}
\end{figure}

We let $S_{\Delta}$ be the graded $\mathbb{C}$-algebra generated by monomials $t^k\chi^m$, where $m$ is an element of the Minkowski sum $k\Delta$ and $\chi^m = x^{m_1}y^{m_2}z^{m_3}$ for $m = (m_1, m_2, m_3)$. The 10 lattice points contained in the polytope, the 6 vertices above and 4 interior lattice points, give us the equations

$$
\begin{tabular}{l}
$z_0 = t$\\
$z_1 = tx$\\
$z_2 = ty$\\
$z_3 = tz$\\
$z_4 = txz$\\
$z_5 = tz^2$\\
$z_6 = tz^3$\\
$z_7 = tz^4$\\
$z_8= tyz$\\
$z_9 = tyz^2$\\
\end{tabular}$$
defining an embedding of $D_{\rho}$ in $\mathbb{P}^{9}$. Its equations are given by the $2\times2$ minors of the matrix

$$\begin{bmatrix}\begin{array}{cccc|c|cc}
z_3 & z_5 & z_6 & z_7 & z_4 & z_8 & z_9\\
z_0 & z_3 & z_5 & z_6 & z_1 & z_2 & z_8\\
\end{array}
\end{bmatrix}$$
Now consider the vertex $(1,0,1)$, corresponding to the variable $z_4$. Consider the chart $z_4 \neq 0$ with $x_i = z_i/z_4$. The remaining coordinates are $x_1$, $x_7$ and $x_9$, corresponding to the cone with vectors $(-1,0,3)$, $(-1,1,1)$ and $(0,0,-1)$. This is the dual of the cone in $\text{Star}(\rho)$ with rays generated by the vectors $(-3, -2, -1)$, $(-1, -1, 0)$ and $(0, 1, 0)$. In this chart, $\tilde{V}\cap D_{\rho}$ is given by the equation $x_7 + x_9 + x_1^2x_7$. In the torus coordinates, this is $z/x\cdot (1 + y + z^2)$. In fact, in every chart $\tilde{V}\cap D_{\rho}$ is given by the equation $u\cdot (1 + y + z^2)$, where $u$ is an invertible element in $\mathbb{C}[x,y,z, 1/x, 1/y, 1/z]$. Since $(1 + y + z^2) = z_0 + z_2 + z_5$, the equations for $\tilde{V}\cap D_{\rho}$ reduce to the $2\times2$ minors of the matrix

$$\begin{bmatrix}\begin{array}{cccc|c}
z_3 & z_5 & z_6 & z_7 & z_4 \\
z_0 & z_3 & z_5 & z_6 & z_1 \\
\end{array}
\end{bmatrix}$$
This is a $(4:1)$ rational scroll in this embedding; i.e. $E_{12} = \mathbb{F}_3$.

The space E is a normal crossing divisor. We may therefore describe the intersections of components with a simplicial complex (the {\it dual complex} or {\it intersection complex}). The vertices $\{ i \}$ correspond to the components $E_i$, and $\{i_1, \ldots i_k \}$ is a face if $E_{i_1} \cap \cdots \cap E_{i_k} \neq \emptyset$.

The intersection complex may be computed by looking at the various $\tilde{V}\cap D_{\rho_1} \cap D_{\rho_2}$ and $\tilde{V}\cap D_{\rho_1} \cap D_{\rho_2}\cap D_{\rho_3}$. We list here the facets of the complex.
\begin{align*}
\{1,2,7\}, \{2,7,8\}, \{3,8,11\}, \{4,10,11\}, \{4,10,12\}, \{5,7,9\}, \{5,7,10\},\,\,\, \\
\{5,10,12\},\{6,7,9\}, \{6,7,10\}, \{6,10,12\}, \{7,8,9\}, \{7,8,10\}, \{8,10,11\}.\\
\end{align*}
We see that there are 14 facets, corresponding to 14 intersection points of 3 components and 25 edges corresponding to 25 projective lines which are the intersections of 2 components.

\newtheorem{euler25}[eulerBoehm]{Lemma}
\begin{euler25}
The Euler characteristic of $E$ is 25.
\end{euler25}\label{proposition:e25}

\begin{proof}
Using the inclusion-exclusion principle and the fact that $E$ has normal crossings, we may compute the Euler characteristic $\chi(E)$ by

$$\chi(E) = \underset{i}{\textstyle\sum \, } \chi(E_i) - \underset{i<j}{\textstyle\sum \, }\chi(E_i\cap E_j) + \underset{i<j<k}{\textstyle\sum \, }
 \chi(E_i\cap E_j \cap E_k)\,\,\, .$$
Now from Table~\ref{table:exceptional} we count $\Sigma \chi (E_i) = 61$. We have $\chi (\mathbb{P}^1) = 2$. Thus from the intersection complex, we compute $\chi(E) = 61 - 25\times 2 + 14 = 25$.
\end{proof}

For the other 2 (quotient) singularities we may use the McKay correspondence as conjectured by Miles Reid and proved by Batyrev in~\cite{bat}, Theorem 1.10. In a crepant resolution of $\mathbb{C}^n/H$, $H$ a finite subgroup of $SL_n$, the Euler characteristic of the exceptional divisor will be the number of conjugacy classes in $H$. In our case this is 13. Denote these exceptional divisors $E'$.

We have constructed a resolution $M_s \rightarrow Y_s$, where $Y_s$ is the quotient $X_s/H$. Let $U$ be the complement of the 6 singular points in $Y_s$.

\newtheorem{ueuler}[eulerBoehm]{Lemma}
\begin{ueuler}
The Euler characteristic of $U$ is $-6$.
\end{ueuler}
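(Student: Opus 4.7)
The plan is to compute $\chi(X_s)$ first and then descend to the quotient. Because $H = \mathbb{Z}/13\mathbb{Z}$ acts freely on $X_s$ away from its six fixed points, and those fixed points are precisely the preimage of the six singular points of $Y_s$, the quotient map restricts to an unramified degree-$13$ cover
$$X_s \setminus \{\text{6 fixed points}\} \;\longrightarrow\; U.$$
Additivity of $\chi$ on constructible decompositions together with its multiplicativity on finite \'etale covers gives
$$13\,\chi(U) \;=\; \chi(X_s) - 6,$$
so the lemma is equivalent to $\chi(X_s) = -72$.

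To compute $\chi(X_s)$ I would compare it with the smooth general fibre $X$ of the full versal deformation. By Proposition~\ref{proposition:ex4cohom} together with the Calabi-Yau Hodge-diamond symmetries,
$$\chi(X) \;=\; 2\bigl(h^{1,1}(X) - h^{1,2}(X)\bigr) \;=\; 2(1 - 61) \;=\; -120.$$
The four singularities of $X_s$ were identified earlier in this chapter as isolated $Q_{12}$ hypersurface singularities with Milnor number $\mu = 12$. Being isolated hypersurface singularities they are smoothable, and since the versal base is smooth and irreducible there is a 1-parameter flat family $\mathcal{X}\to\Delta$ with $\mathcal{X}_0 = X_s$ whose general fibre is smooth and, by Ehresmann's theorem applied to the smooth part of the versal family, diffeomorphic to $X$. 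Applying the local Milnor-fibration description near each singular point yields
$$\chi(X) - \chi(X_s) \;=\; \sum_{i=1}^{4}\bigl(\chi(F_i) - 1\bigr),$$
where $F_i$ is the Milnor fibre at the $i$-th singularity.

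The Milnor fibre of an isolated hypersurface singularity of complex dimension $3$ with Milnor number $\mu$ is homotopy equivalent to a bouquet of $\mu$ copies of $S^3$, so $\chi(F_i) = 1 + (-1)^3\mu_i = -11$. This gives $\chi(X) - \chi(X_s) = 4\cdot(-12) = -48$, hence $\chi(X_s) = -120 + 48 = -72$, and finally
$$\chi(U) \;=\; \frac{-72 - 6}{13} \;=\; -6.$$
The only genuinely non-routine point is justifying the existence of a 1-parameter smoothing of $X_s$ whose smooth nearby fibre is topologically $X$; once that is granted the rest is bookkeeping with the Milnor-fibre formula, so this existence statement is the one step to argue carefully (via smoothness and connectedness of the versal base and the fact that all four singularities are isolated hypersurface singularities).
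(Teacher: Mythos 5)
Your proposal is correct and follows essentially the same route as the paper: compute $\chi(X)=-120$ from Proposition 5.1.4 (3.7.1), correct by the four $Q_{12}$ Milnor fibres (bouquets of twelve $3$-spheres) to get $\chi(X_s)=-72$, and divide by $13$ after removing the six fixed points, where the quotient is \'etale. The only difference is in justifying the one-parameter smoothing with smooth nearby fibre diffeomorphic to $X$: you argue via smoothness and connectedness of the versal base together with Ehresmann, whereas the paper simply cites a Macaulay~2 computation showing that the total space of a general one-parameter smoothing is smooth.
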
\label{lemma:ueuler}

\begin{proof}The singular variety $X_s$ smooths to the general degree 13 Calabi-Yau 3-fold in $\mathbb{P}^6$. A Macaulay 2 computation shows that the total space of a general one parameter smoothing is smooth. The smooth fiber has Euler characteristic -120, see Proposition~\ref{proposition:ex4cohom}. The Milnor fiber of the 3-dimensional $Q_{12}$ singularity is a wedge sum of 12 3-spheres. Thus $\chi(X_s) = -120 + 4\times12 = -72$. Hence

$$\chi(U) = \frac{\chi(X_s \setminus \{ 6\, \text{points} \} )}{13} = \frac{-72 - 6}{13} = -6$$
\end{proof}

\noindent We can now put all this together to prove the main result of this section.\\

{\it Proof of Theorem~\ref{theorem:eBoehm}}.  Since the resolution $M_s \rightarrow Y_s$ is an isomorphism away from the 6 points, four points with exceptional divisor $E$ and two points with exceptional divisor $E^{\prime}$, we have $\chi(M_s) = \chi(U) + 4\chi(E) +2\chi(E') =  -6 + 4\cdot25 + 2\cdot 13 = 120$.

\begin{flushleft}

\appendix

\chapter{Computer Calculations}
The following is a Macaulay 2 code for computing $T^1_{X}$ of an variety $X = \text{Proj}( T/p)$, for an ideal $p$ in a ring $T$.\\ \ \\ \

\begin{table}[h]
\begin{center}
\begin{tabular}{l}
A = resolution(p, LengthLimit => 3)\\
rel = transpose(A.dd$\_$2)\\
dp = transpose  jacobian(A.dd$\_$1)\\
R = T/p\\
Rel = substitute(rel,R)\\
Dp = substitute(dp,R)\\
Der = image Dp\\
N = kernel Rel\\
N0 = image basis(0,N)\\
Der0 = image basis(0,Der)\\
isSubset(Der0,N0)\\
T1temp = N0/Der0\\
T1 = trim T1temp\\
T1mat = gens T1
\end{tabular}
\end{center}
\end{table}\label{sideM2code}
\end{flushleft}
\begin{table}[tp]\footnotesize
\begin{center}
\begin{tabular}{ll}
$\tau_1:$ & (13, -5, -3, -6), (0, 0, 1, 0), (3, -1, 0, -1), (8, -3, -1, -3)\\
$\tau_2:$ & (13, -5, -3, -6), (3, -1, 0, -1), (6, -2, -1, -2),(11, -4, -2, -4)\\
$\tau_3:$ & (0, 0, 0, 1), (6, -2, -1, -2), (14, -5, -3, -6), (15, -5, -3, -6)\\
$\tau_4:$ & (13, -5, -3, -6), (3, -1, 0, -1), (6, -2, -1, -2),(14, -5, -3, -6)\\
$\tau_5:$ & (0, 1, 0, 0), (0, 0, 1, 0), (9, -3, -2, -4), (11, -4, -2, -5)\\
$\tau_6:$ & (0, 1, 0, 0), (0, 0, 1, 0), (7, -2, -1, -3), (5, -1, 0, -2)\\
$\tau_7:$ & (0, 0, 1, 0), (1, 0, 0, 0), (3, 0, 0, -1), (5, -1, 0, -2)\\
$\tau_8:$ & (0, 1, 0, 0), (0, 0, 0, 1), (9, -3, -2, -4), (5, -1, -1, -2)\\
$\tau_9:$ &  (0, 1, 0, 0), (1, 0, 0, 0), (7, -2, -1, -3), (5, -1, -1, -2)\\
$\tau_{10}:$ & (0, 0, 0, 1), (1, 0, 0, 0), (6, -2, -1, -2), (15, -5, -3, -6)\\
$\tau_{11}:$ & (1, 0, 0, 0), (3, -1, 0, -1), (6, -2, -1, -2), (15, -5, -3, -6)\\
$\tau_{12}:$ & (0, 0, 1, 0), (1, 0, 0, 0), (3, -1, 0, -1), (12, -4, -2, -5)\\
$\tau_{13}:$ & (0, 0, 1, 0), (1, 0, 0, 0), (9, -3, -2, -4), (12, -4, -2, -5)\\
$\tau_{14}:$ & (0, 1, 0, 0), (0, 0, 0, 1), (0, 0, 1, 0), (1, 0, 0, 0)\\
$\tau_{15}:$ & (0, 0, 0, 1), (1, 0, 0, 0), (3, -1, 0, -1), (6, -2, -1, -2)\\
$\tau_{16}:$ & (9, -3, -2, -4), (1, 0, 0, 0), (7, -2, -1, -3), (5, -1, -1, -2)\\
$\tau_{17}:$ & (0, 1, 0, 0), (0, 0, 0, 1), (1, 0, 0, 0), (5, -1, -1, -2)\\
$\tau_{18}:$ & (1, 0, 0, 0), (7, -2, -1, -3), (3, 0, 0, -1), (5, -1, 0, -2)\\
$\tau_{19}:$ & (0, 1, 0, 0), (0, 0, 1, 0), (3, 0, 0, -1), (5, -1, 0, -2)\\
$\tau_{20}:$ & (13, -5, -3, -6), (0, 0, 1, 0), (3, -1, 0, -1), (11, -4, -2, -5)\\
$\tau_{21}:$ & (3, -1, 0, -1), (14, -5, -3, -6), (15, -5, -3, -6),(12, -4, -2, -5)\\
$\tau_{22}:$ & (9, -3, -2, -4), (14, -5, -3, -6), (15, -5, -3, -6),(12, -4, -2, -5)\\
$\tau_{23}:$ & (3, -1, 0, -1), (13, -5, -3, -6), (14, -5, -3, -6),(12, -4, -2, -5)\\
$\tau_{24}:$ & (9, -3, -2, -4), (13, -5, -3, -6), (14, -5, -3, -6),(12, -4, -2, -5)\\
$\tau_{25}:$ & (0, 0, 0, 1), (3, -1, 0, -1), (6, -2, -1, -2), (11, -4, -2, -4)\\
$\tau_{26}:$ & (0, 0, 0, 1), (0, 0, 1, 0), (3, -1, 0, -1), (8, -3, -1, -3)\\
$\tau_{27}:$ & (13, -5, -3, -6), (0, 0, 0, 1), (3, -1, 0, -1), (8, -3, -1, -3)\\
$\tau_{28}:$ & (13, -5, -3, -6), (0, 0, 0, 1), (6, -2, -1, -2), (11, -4, -2, -4)\\
$\tau_{29}:$ & (13, -5, -3, -6), (0, 0, 0, 1), (6, -2, -1, -2), (14, -5, -3, -6)\\
$\tau_{30}:$ & (3, -1, 0, -1), (6, -2, -1, -2), (14, -5, -3, -6),(15, -5, -3, -6)\\
$\tau_{31}:$ & (13, -5, -3, -6), (0, 1, 0, 0), (9, -3, -2, -4), (11, -4, -2, -5)\\
$\tau_{32}:$ & (0, 0, 1, 0), (9, -3, -2, -4), (11, -4, -2, -5), (12, -4, -2, -5)\\
$\tau_{33}:$ & (0, 0, 1, 0), (3, -1, 0, -1), (11, -4, -2, -5), (12, -4, -2, -5)\\
$\tau_{34}:$ & (0, 0, 1, 0), (1, 0, 0, 0), (7, -2, -1, -3), (5, -1, 0, -2)\\
$\tau_{35}:$ & (0, 1, 0, 0), (1, 0, 0, 0), (7, -2, -1, -3), (3, 0, 0, -1)\\
$\tau_{36}:$ & (0, 1, 0, 0), (9, -3, -2, -4), (7, -2, -1, -3), (5, -1, -1, -2)\\
$\tau_{37}:$ & (0, 0, 1, 0), (9, -3, -2, -4), (1, 0, 0, 0), (7, -2, -1, -3)\\
$\tau_{38}:$ & (0, 0, 0, 1), (0, 0, 1, 0), (1, 0, 0, 0), (3, -1, 0, -1)\\
$\tau_{39}:$ & (13, -5, -3, -6), (0, 1, 0, 0), (0, 0, 0, 1), (9, -3, -2, -4)\\
$\tau_{40}:$ & (1, 0, 0, 0), (9, -3, -2, -4), (15, -5, -3, -6), (12, -4, -2, -5)\\
$\tau_{41}:$ & (1, 0, 0, 0), (3, -1, 0, -1), (15, -5, -3, -6), (12, -4, -2, -5)\\
$\tau_{42}:$ & (0, 0, 0, 1), (1, 0, 0, 0), (9, -3, -2, -4), (15, -5, -3, -6)\\
$\tau_{43}:$ & (0, 1, 0, 0), (0, 0, 1, 0), (9, -3, -2, -4), (7, -2, -1, -3)\\
$\tau_{44}:$ & (0, 0, 0, 1), (9, -3, -2, -4), (1, 0, 0, 0), (5, -1, -1, -2)\\
$\tau_{45}:$ & (0, 1, 0, 0), (0, 0, 1, 0), (1, 0, 0, 0), (3, 0, 0, -1)\\
$\tau_{46}:$ & (0, 1, 0, 0), (7, -2, -1, -3), (3, 0, 0, -1), (5, -1, 0, -2)\\
$\tau_{47}:$ & (9, -3, -2, -4), (11, -4, -2, -5), (13, -5, -3, -6), (12, -4, -2, -5)\\
$\tau_{48}:$ & (3, -1, 0, -1), (11, -4, -2, -5), (13, -5, -3, -6),(12, -4, -2, -5)\\
$\tau_{49}:$ & (13, -5, -3, -6), (0, 1, 0, 0), (0, 0, 1, 0), (11, -4, -2, -5)\\
$\tau_{50}:$ & (0, 0, 0, 1), (9, -3, -2, -4), (14, -5, -3, -6), (15, -5, -3, -6)\\
$\tau_{51}:$ & (13, -5, -3, -6), (0, 0, 0, 1), (9, -3, -2, -4), (14, -5, -3, -6)\\
$\tau_{52}:$ & (13, -5, -3, -6), (0, 0, 0, 1), (3, -1, 0, -1), (11, -4, -2, -4)\\
$\tau_{53}:$ & (13, -5, -3, -6), (0, 0, 0, 1), (0, 0, 1, 0), (8, -3, -1, -3)\\
\end{tabular}\label{table:cones}\caption{The maximal cones of the subdivision $\Sigma$.}\end{center}\end{table}

\begin{flushleft}
\chapter{Explicit Expressions for the Varieties in Chapter 3}\chaptermark{Explicit Expressions}
In the $P^7_1$ case, studied in Section~\ref{ex1section}, the linear entries of $M^{1}$ are\label{ex1refUttrykk}

\[
\begin{array}{l}
l_{1} = x_{4}\\
l_{2} = t_{65}x_{1} + t_{57}x_{2}+ t_{61}x_{3} + t_{30}x_{5}\\
l_{3} = -x_{5} \\
l_{4} = x_{6}\\
l_{5} = -t_{55}x_{1} - t_{59}x_{2}-t_{63}x_{3}-t_{33}x_{6}\\
l_{6} = -x_{7}
\end{array}
\]

and the cubic terms are

\[
\begin{array}{l}
g_{1} =  -t_{15}x_{7}^{3}-t_{17}x_{3}x_{7}^2
-t_{18}x_{2}x_{7}^2 -t_{19}x_{1}x_{7}^{2}
-t_{22}x_{2}x_{3}x_{7} -t_{24}x_{1}x_{3}x_{7}\\
-t_{25}x_{1}x_{2}x_{7}
-t_{36}x_{1}^3 - t_{37}x_{1}^2x_{7} -t_{42}x_{2}^2x_{7}
-t_{43}x_{2}^3 -t_{46}x_{3}^3
-t_{47}x_{3}^2x_{7}\\
-t_{67}x_{1}^2x_{3} -
t_{68}x_{1}^2x_{2} - t_{76}x_{2}^2x_{3}
-t_{77}x_{1}x_{2}^2 -t_{82}x_{2}x_{3}^2 -t_{83}x_{1}
x_{3}^2\\ \ \\
g_{2} = -t_{1}x_{1}^3 -t_{2}x_{2}^3 -t_{3}x_{3}^3
-t_{6}x_{4}^3 -t_{9}x_{3}x_{4}^2 -t_{10}x_{2}x_{4}^2
-t_{11}x_{1}x_{4}^2 \\
-t_{16}x_{7}^3 -t_{20}x_{3}x_{7}^2
-t_{21}x_{2}x_{7}^2 -t_{23}x_{1}x_{7}^2 -t_{26}x_{5}^3
-t_{27}x_{2}x_{5}^2 -t_{28}x_{3}x_{5}^2\\
-t_{29}x_{1}x_{5}^2 -t_{31}x_{6}^3 -t_{32}x_{3}x_{6}^2
-t_{34}x_{1}x_{6}^2 -t_{35}x_{2}x_{6}^2
-t_{48}x_{4}^2x_{5}
-t_{49}x_{4}x_{5}^2\\
 -t_{50}x_{5}^2x_{6} -t_{51}x_{5}x_{6}^2 -t_{52}x_{6}^2x_{7}
-t_{53}x_{6}x_{7}^2 -t_{54}x_{1}^2x_{6}
-t_{56}x_{2}^2x_{5} -t_{58}x_{2}^2x_{6}\\
-t_{60}x_{3}^2x_{5} -t_{62}x_{3}^2x_{6}
-t_{64}x_{1}^2x_{5} -t_{66}x_{1}^2x_{7}
-t_{69}x_{1}^2x_{4} -t_{72}x_{2}^2x_{4}
-t_{75}x_{2}^2x_{7}\\
 -t_{78}x_{3}^2x_{4} -t_{81}x_{3}^2x_{7}
-t_{84}x_{3}x_{4}x_{5}
-t_{85}x_{2}x_{4}x_{5} -t_{86}x_{1}x_{4}x_{5}
-t_{87}x_{3}x_{5}x_{6}\\
 -t_{88}x_{2}x_{5}x_{6} -t_{89}x_{1}x_{5}x_{6}
-t_{90}x_{3}x_{6}x_{7} -t_{91}x_{2}x_{6}x_{7}
-t_{92}x_{1}x_{6}x_{7} -x_{1}x_{2}x_{3}\\ \ \\
g_{3} =  -t_{4}x _{4}^3  -t_{5}x _{3} x _{4}^2 -t_{6}x
_{3}
x _{4}^2 + t_{6}x _{4}^2 x _{5} -t_{7}x _{1} x _{4}^2 -t _{8}x
_{2} x _{4}^2  -t _{12}x _{2}x _{3}x _{4}\\
 -t _{13}x_{1}x_{3} x_{4} -t _{14}x _{1} x _{2}x _{4} -t_{38}x _{1}^2 x
_{4} -t_{39}x_{1}^3 -t_{40}x _{2}^3 -t_{41}x _{2}^2 x
_{4} -t_{44}x _{3}^3\\
-t_{45}x_{3}^2x_{4} -t_{70}x_{1}^2 x _{3} -t_{71}x _{1}^2 x _{2}
-t_{73}x _{2}^2 x _{3} -t_{74}x _{1}x _{2}^2 -t_{79}x _{2}
x_{3}^2 -t _{80}x _{1} x _{3}^2 \end{array} \]

In the $P^7_2$ case, studied in Section~\ref{ex2section}, the cubic $g$ is

\[
\begin{array}{l}g = t_{15}x_7^3 + t_{16}x_6x_7^2 + t_{17}x_4x_7^2 + t_{18}x_3x_7^2 + t_{19}x_2x_7^2 + t_{21}x_3x_6x_7 + t_{22}x_3x_4x_7\\
\,\,+ t_{23}x_2x_6x_7 + t_{24}x_2x_4x_7 + t_{50}x_2^3 + t_{51}x_2^2x_7 + t_{52}x_2^2x_6 + t_{53}x_2^2x_4 + t_{55}x_3^3\\
\,\,+ t_{56}x_3^2x_7 + t_{57}x_3^2x_6 + t_{58}x_3^2x_4 + t_{65}x_4^3 + t_{66}x_4^2x_7 + t_{68}x_3x_4^2 + t_{69}x_2x_4^2\\
\,\,+ t_{75}x_6^3 + t_{76}x_6^2x_7 + t_{78}x_3x_6^2 + t_{79}x_2x_6^2\label{ex2refUttrykk} \,\, ,\end{array} \]

and the quadrics $q_1,\ldots,q_4$ are

\[
\begin{array}{l}
q_1 = t_{10}x_5^2 + t_{12}x_3x_5 + t_{13}x_2x_5 + t_{34}x_2^2 + t_{36}x_3^2 + t_{60}x_4^2 + t_{61}x_4x_5\\
\,\,+ t_{62}x_3x_4 + t_{63}x_2x_4 + t_{70}x_5x_6 + t_{71}x_6^2 + t_{72}x_3x_6 + t_{73}x_2x_6\\ \ \\
q_2 = x_4x_6 + t_1x_2^2 + t_2x_3^2 + t_8x_1^2 + t_{11}x_5^2 + t_{25}x_7^2 + t_{29}x_1x_5 + t_{32}x_2x_3 + t_{35}x_2x_5\\
\,\, + t_{37}x_3x_5 + t_{43}x_1x_2 + t_{48}x_1x_3 + t_{54}x_2x_7 + t_{59}x_3x_7\\ \ \\
q_3 = x_2x_3 + t_3x_4^2 + t_4x_6^2 + t_6x_1^2 + t_{14}x_5^2 + t_{20}x_7^2 + t_{26}x_1x_4 + t_{28}x_1x_5 + t_{30}x_1x_6\\
\,\,+ t_{39}x_4x_6 + t_{64}x_4x_5 + t_{67}x_4x_7 + t_{74}x_5x_6 + t_{77}x_6x_7\\ \ \\
q_4 = t_5x_1^2 + t_7x_1x_6 + t_9x_1x_4 + t_{27}x_4^2 +t_{31}x_6^2 + t_{40}x_1x_2 + t_{41}x_2^2 + t_{42}x_2x_6\\
\,\,+ t_{44}x_2x_4 + t_{45}x_1x_3 + t_{46}x_3^2 +t_{47}x_3x_6 + t_{49}x_3x_4\, \, \, .\end{array}\]

In the $P^7_3$ case, studied in Section~\ref{ex3section}, the entries of the syzygy matrix $M^{1}$ are given by

\[
\begin{array}{l}
g =  x_1x_2x_3 + t_1x_1^3 + t_4x_2^3 + t_7x_3^3 + t_{10}x_4^3 + t_{12}x_3x_4^2 + t_{13}x_2x_4^2 + t_{14}x_1x_4^2\\ \label{ex3refUttrykk}
\,\,+ t_{15}x_5^3 + t_{17}x_3x_5^2 + t_{18}x_2x_5^2 + t_{19}x_1x_5^2 + t_{20}x_6^3 + t_{21}x_7^3 + t_{23}x_3x_6^2 + t_{24}x_2x_6^2\\
\,\,+ t_{25}x_1x_6^2 + t_{27}x_3x_7^2 + t_{28}x_2x_7^2 + t_{29}x_1x_7^2 + t_{36}x_1^2x_4 + t_{38}x_1^2x_5 + t_{40}x_1^2x_6\\
\,\, + t_{42}x_1^2x_7 + t_{44}x_2^2x_4 + t_{46}x_2^2x_5 + t_{48}x_2^2x_6 + t_{50}x_2^2x_7 + t_{52}x_3^2x_4 + t_{54}x_3^2x_5\\
\,\,+ t_{56}x_3^2x_6 + t_{58}x_3^2x_7 + t_{60}x_4^2x_6 + t_{61}x_3x_4x_6 + t_{62}x_2x_4x_6 + t_{63}x_1x_4x_6\\
\,\,+ t_{64}x_4^2x_7 + t_{65}x_3x_4x_7 + t_{66}x_2x_4x_7 + t_{67}x_1x_4x_7 + t_{68}x_5^2x_6 + t_{69}x_3x_5x_6\\
\,\, + t_{70}x_2x_5x_6 + t_{71}x_1x_5x_6 + t_{72}x_5^2x_7 + t_{73}x_3x_5x_7 + t_{74}x_2x_5x_7 + t_{75}x_1x_5x_7\\
\,\,+ t_{76}x_4x_6^2 + t_{77}x_4x_7^2 + t_{78}x_5x_6^2 + t_{79}x_5x_7^2\end{array}\]
\[
\begin{array}{l}
q_1 = x_4x_5 + t_2x_1^2 + t_5x_2^2 + t_8x_3^2 + t_{22}x_6^2 + t_{26}x_7^2 + t_{30}x_1x_2 + t_{32}x_1x_3 + t_{34}x_2x_3\\
\,\, + t_{41}x_1x_6 + t_{43}x_1x_7 + t_{49}x_2x_6 + t_{51}x_2x_7 + t_{57}x_3x_6 + t_{59}x_3x_7\end{array} \]
\[
\begin{array}{l}
q_2 = x_6x_7 + t_3x_1^2 + t_6x_2^2 + t_9x_3^2 + t_{11}x_4^2 + t_{16}x_5^2 + t_{31}x_1x_2 + t_{33}x_1x_3 + t_{35}x_2x_3\\
\,\, + t_{37}x_1x_4 + t_{39}x_1x_5 + t_{45}x_2x_4 + t_{45}x_2x_5 + t_{53}x_3x_4 + t_{55}x_3x_5\,\, .\end{array} \]\pagebreak

In the $P^7_4$ case, studied in Section~\ref{ex4section}, the quadrics are given by

\[
\begin{array}{l}
q_1 = t_{20}x_1^2 + t_{22}x_2^2 + t_{35}x_1x_3 + t_{38}x_2x_3 + t_{57}x_3^2 + t_{34}x_1x_4 + t_{37}x_2x_4 \\
\,\,  + t_{67}x_4^2 + t_{21}x_1x_5 + t_{23}x_2x_5 + t_{8}x_3x_5 + t_{7}x_4x_5 + t_{5}x_5^2\\ \ \\
q_2 =  t_{1}x_1^2+t_{30}x_1x_2+t_2x_2^2 + x_3x_4+t_{33}x_1x_5 + t_{36}x_2x_5 +t_6x_5^2+t_{48}x_1x_6\\
\,\, +t_{52}x_2x_6 +t_{63}x_5x_6+t_{16}x_6^2 +t_{50}x_1x_7+t_{55}x_2x_7+t_{65}x_6x_7+t_{14}x_7^2\\ \ \\
q_3 =  -x_1x_2-t_3x_3^2 -t_{39}x_3x_4-t_4x_4^2 -t_{56}x_3x_5 -t_{66}x_4x_5 -t_9x_5^2-t_{58}x_3x_6\\
\,\,  -t_{60}x_4x_6-t_{62}x_5x_6-t_{15}x_6^2 -t_{42}x_3x_7 -t_{45}x_4x_7-t_{64}x_6x_7-t_{11}x_7^2\\ \ \\
q_4 = -t_{51}x_1^2-t_{54}x_2^2-t_{44}x_1x_3 -t_{43}x_2x_3 -t_{26}x_3^2 -t_{47}x_1x_4 -t_{46}x_2x_4\\
\,\, -t_{28}x_4^2 -t_{13}x_1x_7 -t_{12}x_2x_7 -t_{27}x_3x_7 -t_{29}x_4x_7-t_{10}x_7^2\,\, ,\end{array} \]\label{ex4refUttrykk}

and the linear forms are given by

\[
\begin{array}{l}
l_1 = t_{18}x_1+t_{19}x_2+t_{32}x_3+t_{31}x_4\\
l_2 =  x_7\\
l_3 =  -x_6\\
l_4 =  t_{49}x_1+t_{53}x_2+t_{59}x_3+t_{61}x_4+t_{17}x_6\\
l_5 =  x_5\\
l_6 =  t_{41}x_1+t_{40}x_2+t_{24}x_3+t_{25}x_4\,\, .
\end{array} \]\\ \ \\ \ \\

In the $P^7_5$ case, studied in Section~\ref{ex5section}, the entries of the syzygy matrix $M^1$ are\label{ex5refUttrykk}
\[
\begin{array}{l}
l_1 = t_8x_1 + t_9x_3 + t_{24}x_2\\
l_2 = t_1x_1+t_{23}x_3+t_{26}x_6 + t_{44}x_4+t_{45}x_5\\
l_3 = t_5x_5 + t_{36}x_3 + t_{42}x_7 + t_{46}x_1 + t_{48}x_2\\
l_4 = t_{16}x_3+t_{17}x_5 + t_{35}x_4\\
l_5 = t_{10}x_1 + t_{11}x_6 + t_{25}x_7\\
l_6 = t_6x_6 + t_{27}x_1 + t_{39}x_4 +t_{50}x_2+t_{52}x_3\\
l_7 =  t_3x_3 + t_{22}x_1 + t_{34}x_5 + t_{51}x_6 + t_{54}x_7\\
l_8 = t_{18}x_4 +t_{19}x_6 +t_{38}x_5\\
l_9 = t_4x_4 + t_{30}x_2 + t_{37}x_6 + t_{43}x_1 + t_{56}x_7\\
l_{10} = t_{12}x_2 + t_{13}x_4 + t_{29}x_3 \\
l_{11} = t_2x_2 + t_{28}x_4 + t_{31}x_7 + t_{47}x_5 + t_{49}x_6\\
l_{12} = t_{14}x_2+t_{15}x_7 + t_{33}x_1\\
l_{13} = t_7x_7 + t_{32}x_2 + t_{41}x_5 + t_{53}x_3 + t_{55}x_4\\
l_{14} = t_{20}x_5 + t_{21}x_7 + t_{40}x_6\,\, .
\end{array} \]

\end{flushleft}

\bibliography{referanser}
\end{document}